\newtheorem{theorem}{Theorem}[section]
\newtheorem{corollary}[theorem]{Corollary}
\newtheorem{proposition}[theorem]{Proposition}
\theoremstyle{definition}
\newtheorem{definition}[theorem]{Definition}
\newtheorem{remark}[theorem]{Remark}
\newcommand{\ep}{\varepsilon}
\newcommand{\vect}[1]{\boldsymbol{#1}}
\newcommand{\bu}{\vect{u}}
\newcommand{\bv}{\vect{v}}
\newcommand{\bx}{\vect{x}}
\newcommand{\bzero}{\vect{0}}
\begin{document}

\title[Inadequacy of nudging algorithms to recover non-dissipative systems]{On the inadequacy of nudging data assimilation algorithms for non-dissipative systems}
\author{Edriss S. Titi and Collin Victor}
\address[Edriss~S.~Titi]{Department of Mathematics, Texas A\&M University, College Station, TX 77843, USA and Department of Applied Mathematics and Theoretical Physics, University of Cambridge, Cambridge CB3 0WA, UK}
\address[Collin Victor]{Department of Mathematics, Texas A\&M University, College Station, TX 77843, USA}
\email[Edriss~S.~Titi]{$\quad$titi@math.tamu.edu \\ and edriss.titi@maths.cam.ac.uk}
\email[Collin Victor]{$\quad$collin.victor@tamu.edu }

\begin{abstract}
In this work, we study the applicability of the Azouani-Olson-Titi (AOT) nudging algorithm for continuous data assimilation to evolutionary dynamical systems that are not dissipative. 
Specifically, we apply the AOT algorithm to a partially dissipative variant of the Lorenz 1963 system, the Korteweg-de Vries equation (KdV) in 1D, and the 2D incompressible Euler equations. Our analysis reveals that both the Euler and KdV equations lack the finitely many determining modes property, leading to the construction of infinitely many solutions with exactly the same sparse observational data, which data assimilation methods cannot distinguish between. Simultaneously, we numerically verify that the AOT algorithm successfully recovers these counterexamples for the damped and driven KdV equation, which is dissipative. Additionally, to further support our argument, we present numerical evidence showing that the AOT algorithm is ineffective at accurately recovering solutions for a partially dissipative variant of the Lorenz 1963 system.
\end{abstract}
\maketitle

\section{Introduction}

Accurate computational prediction of nonlinear evolution systems encounters two major challenges: initialization and long-time accuracy. These refer to the difficulty in accurately initializing the system and ensuring that our computed solution remains accurate for long intervals of time, respectively. Data assimilation is a class of techniques that attempts to resolve both of these issues by incorporating observational data into the underlying evolutionary physical model. The purpose of data assimilation is to alleviate the reliance on knowing the complete state of the initial data and achieve synchronization of our simulated solution with the true dynamics by using adequately collected sparse spatio-temporal  observational data (measurements).

In this work, we are concerned with the application of data assimilation algorithms to systems that are not known to be dissipative. By dissipative, we mean that equations possess a compact absorbing ball or set in a relevant vector space (e.g. $L^2$ or $H^1$), which is a set that all solutions to the system eventually enter into and remain within for all future times. The existence of such an absorbing ball (or set) can be used to show the existence of a global attractor with finite fractal dimension, which in turn yields a finite number of parameters that determine the asymptotic in time behavior of solutions for dissipative dynamical systems.
One relevant example of data assimilation being applied to non-dissipative dynamical systems is that of the shallow water equations, which have long been used as a simplified model for the ocean dynamics.
Contrary to the case of dissipative dynamical systems \cite{Cockburn_Jones_Titi_1997}, it is unknown whether these equations have finitely many determining parameters, such as finitely many Fourier modes or nodal values which determine the solution uniquely. Yet researchers still apply data assimilation to these systems, see, e.g., \cite{Budgel_1986,Kevlahan_Khat_Protas_2019,Tirupathi_Tigran_Zhuk_McKenna_2016}.
The property, for example, of finitely many determining Fourier modes \cite{Foias_Prodi_1967, Jones_Titi_1992,Cockburn_Jones_Titi_1997} is an essential first litmus test for data assimilation to work, as it ensures the existence of a minimum length scale at which measurements and observations determine the asymptotic behavior of the system we wish to predict. Without this property, there is no guarantee that any amount of observations will be sufficient to recover the dynamics of the true solution one wishes to predict.

This work focuses on characterizing the class of systems for which one should not expect the implementation of the Azouani-Olson-Titi (AOT) continuous data assimilation algorithm, introduced in \cite{Azouani_Olson_Titi_2014,Azouani_Titi_2014}, to be effective. Since its development, this algorithm has been the subject of much study, both analytically 
\cite{Albanez_Nussenzveig_Lopes_Titi_2016,
Bessaih_Olson_Titi_2015,
Biswas_Bradshaw_Jolly_2020,
Biswas_Foias_Mondaini_Titi_2018downscaling,
Biswas_Martinez_2017,
Biswas_Price_2020_AOT3D,
Carlson_Hudson_Larios_2018,
Carlson_Larios_2021_sens,
Chen_Li_Lunasin_2021,
Celik_Olson_2023,
Diegel_Rebholz_2021,
Du_Shiue_2021,
Farhat_Jolly_Titi_2015,
Farhat_Lunasin_Titi_2016abridged,
Farhat_Lunasin_Titi_2016benard,
Farhat_Lunasin_Titi_2016_Charney,
Farhat_Lunasin_Titi_2017_Horizontal,
Foias_Mondaini_Titi_2016,
Foyash_Dzholli_Kravchenko_Titi_2014,
GarciaArchilla_Novo_Titi_2018,
GarciaArchilla_Novo_2020,
Gardner_Larios_Rebholz_Vargun_Zerfas_2020_VVDA,
Ibdah_Mondaini_Titi_2018uniform,
Jolly_Martinez_Olson_Titi_2018_blurred_SQG,
Jolly_Martinez_Titi_2017,
Jolly_Sadigov_Titi_2017,
Larios_Pei_2018_NSV_DA,
Markowich_Titi_Trabelsi_2016_Darcy,
Mondaini_Titi_2018_SIAM_NA,
Pachev_Whitehead_McQuarrie_2021concurrent,
Pei_2019,
Rebholz_Zerfas_2018_alg_nudge,
Zerfas_Rebholz_Schneier_Iliescu_2019} 
and computationally 
\cite{
Altaf_Titi_Knio_Zhao_Mc_Cabe_Hoteit_2015,
Carlson_VanRoekel_Petersen_Godinez_Larios_2021,
DiLeoni_Clark_Mazzino_Biferale_2018_inferring,
Desamsetti_Dasari_Langodan_Knio_Hoteit_Titi_2019_WRF,
Franz_Larios_Victor_2021,
Gesho_Olson_Titi_2015,
Larios_Rebholz_Zerfas_2018,
Larios_Victor_2019,
Lunasin_Titi_2015,
Larios_Pei_2017_KSE_DA_NL}. 
The AOT algorithm has been referenced a few times in the literature under the names interpolated nudging or simply nudging, among others. 
In the context of classical methods of data assimilation, nudging refers to a method developed by Anthes and Hoke \cite{Anthes_1974_JAS,Hoke_Anthes_1976_MWR} in the 1970's for numerical weather prediction. 
The AOT algorithm is superficially similar to nudging, but is technically and conceptually different. 
In the classical algorithm of Anthes and Hoke one nudges the system only at the discrete nodal points of the observational data. 
On the other hand, the AOT algorithm works by constructing a spatial approximation interpolation operator based on the observational data and then nudges at every point in the vicinity of the observation nodal point using the interpolation operator. 
Nevertheless, for simplicity, we will refer to the AOT algorithm as ``nudging'' for the remainder of this paper. 
Notably, the AOT data assimilation algorithm was designed for dissipative evolutionary nonlinear systems motivated by, and capitalizing on, the fact that the long-term dynamics of such systems are determined by the dynamics of large spatial scales (see, e.g., \cite{Cockburn_Jones_Titi_1997} and the references therein). 
Such systems include, for example, the 2D Navier-Stokes equations, the 2D Rayleigh-B\'enard convection problem, as well as the 1D Kuramoto-Sivashinsky equation, and some nonlinear reaction-diffusion systems, to name some (cf. \cite{Temam_1997_IDDS}).

The notion of dissipation for dynamical systems is often conflated with the inclusion of diffusion and viscous effects. 
Including a Laplacian term is one way to obtain a dissipative system; however, dissipation can also be realized through damping terms, as in the Korteweg-de Vries equation (see e.g. \cite{Ghidaglia_1994,Ghidaglia_1988}). 
For many dynamical systems relevant to geophysics, the model equations are assumed to be non-dissipative. 
In practice, modelers often re-introduce small artificial diffusion terms purely for numerical stability. 
We note however that the inclusion of such terms may guarantee that the model equations may be dissipative even though the underlying original system is not. 

In this work we consider a few illustrative examples of non-dissipative dynamical systems in order to demonstrate that the AOT data assimilation algorithm cannot be used to accurately recover the true solution.
We emphasize that our results are not isolated specifically to the AOT algorithm, but rather extend generically to data assimilation algorithms in general.
The failure of data assimilation algorithms to recover solutions to non-dissipative dynamical systems arises as an intrinsic property of the particular equations considered. 
By this we mean that the pathological solutions we consider here present an issue for any method of data assimilation, as the solutions are not uniquely identifiable from a fixed number of observed Fourier modes.

In this work, we consider three examples of non-dissipative dynamical systems; a partially dissipative variant of the 1963 Lorenz system, the classical Korteweg-de Vries equation in 1D, and the incompressible Euler equations in 2D. 
The 1963 Lorenz system is a chaotic system of coupled ODEs that has been used to test methods of data assimilation, including the AOT algorithm in the fully dissipative regime (see e.g. \cite{Blocher_Martinez_Olson_2018,Peng_Wu_Shiue_2023,Carlson_Hudson_Larios_Martinez_Ng_Whitehead_2022}). 
For the 1963 Lorenz system we show that removing the dissipation from even a single component is sufficient to prevent the AOT algorithm from recovering the true solution.
We consider the KdV equation in this work as, in particular, as it was proven in \cite{Ghidaglia_1988,Ghidaglia_1994} that the system has a finite-dimensional global attractor in the presence of a damping and forcing terms. 
Moreover, it was shown in \cite{Jolly_Sadigov_Titi_2017} that the nudging algorithm recovers the true solution given sufficiently many observed Fourier modes. 
Without the presence of forcing and damping, the KdV equation is a dispersive equation that has infinitely many conserved quantities, the $L^2$ norm being among them. 
The lack of dissipation and these conservation properties make this equation ideal to consider as a relatively simple 1D model to explore why data assimilation fails to recover solutions to systems without finitely many determining modes. 
While we do consider the KdV equation, the analytical findings present here extend to a broader class of PDEs that conserve the $L^2$ norm featuring the same nonlinearity.
To illustrate this, we also consider the incompressible Euler equations in 2D, which also conserve the $L^2$ norm and are not known to have finitely many determining modes.
The 2D incompressible Euler equations are of particular interest as they are the inviscid counterpart to the 2D incompressible Navier-Stokes equations, which are a classical example of a dissipative system and was the original setting of the AOT algorithm \cite{Azouani_Olson_Titi_2014}.

 The remainder of this manuscript is organized as follows: in \cref{sect:Lorenz} we consider a partially dissipative variant of the Lorenz system and show that nudging fails to recover the reference solution in this setting.
 We begin our discussion of the application of nudging to recover the solutions to the KdV equation in \cref{sect:KdV:preliminaries}, where we discuss various mathematical preliminaries regarding the formulation of the KdV equation and show that, unlike the damped and driven case, the standard version of KdV does not have the finitely many determining modes property. In \cref{sect:KdV:computational results} we  show the numerical construction of pathological examples where nudging fails to recover the true solution for the KdV equations and we additionally verify the validity of the AOT  nudging algorithm for these counterexamples when the system is dissipative.
 In \cref{sect:Euler} we extend our results from the KdV regime to the case of 2D incompressible Euler equations. In \cref{sect:Euler:preliminaries} we outline the analytical construction of pathological counterexamples for this regime and in \cref{sect:Euler:computational results} we demonstrate these findings computationally.
 We end this work in \cref{sect:conclusion} with a discussion of the conclusions we reach from our numerical results.

\section{Lorenz 1963 System}\label{sect:Lorenz}

As the focus of this work is examining the applicability of the AOT algorithm to dynamical systems that are not dissipative, we now examine a nondissipative variant of the Lorenz 1963 system. Lorenz introduced in 1963 a simplified/reduced model of atmospheric convection \cite{Lorenz_1963}, and so it is of particular interest for studying methods of data assimilation, see e.g. \cite{Hayden_Olson_Titi_2011,Blocher_Martinez_Olson_2018,Du_Shiue_2021,Hayden_2007,Pecora_Carroll_1990,Law_Shukla_Stuart_2014} and the references within. 
The 1963 Lorenz system is given by:
\begin{equation}\begin{cases}\label{eq:Lorenz}
    \dot{X} = -\sigma X + \sigma Y, & X(0) = X_0\\
    \dot{Y} = -\sigma X - Y - XZ, & Y(0) = Y_0\\
    \dot{Z} = -bZ + XY - b(r+\sigma), & Z(0) = Z_0.
\end{cases}
\end{equation}
Here $\sigma>0$ is the Prandtl number, $r>0$ is the Rayleigh number, and $b >0$ is a geometric factor. 
It is well-known that, for certain parameter choices (e.g., $\sigma  = 10$, $r = 28$, $b = 8/3$), that the Lorenz system has a chaotic global attractor \cite{Tucker_1999}.

It is also known that applying the AOT algorithm, or a synchronization scheme, (see e.g.,  \cite{Hayden_Olson_Titi_2011,Hayden_2007}), by observing only the $X$ component is sufficient to synchronize the nudged solution with the reference solution (see e.g. \cite{Hayden_Olson_Titi_2011,Blocher_Martinez_Olson_2018,Hayden_2007}). 
In particular, we can consider the following three different synchronization (nudging) schemes from \cite{Hayden_2007} (which was motivated by \cite{Olson_Titi_2008_TCFD}) to understand when data assimilation works for this system. 
\begin{equation}\label{eq:Lorenz-X}
\begin{cases}
    \dot{X} = -\sigma X + \sigma Y, & X(0) = X_0\\
    \dot{y} = -\sigma X - y - Xz, & y(0) = y_0\\
    \dot{z} = -bz + Xy - b(r+\sigma), & z(0) = z_0.
\end{cases}
    \end{equation}
\begin{equation}\label{eq:Lorenz-Y}
\begin{cases}
    \dot{x} = -\sigma x + \sigma Y, & x(0) = x_0\\
    \dot{Y} = -\sigma X - Y - XZ, & Y(0) = Y_0\\
    \dot{z} = -bz + xY - b(r+\sigma), & z(0) = z_0.
\end{cases}
    \end{equation}
    \begin{equation}\label{eq:Lorenz-Z}
\begin{cases}
    \dot{x} = -\sigma x + \sigma y, & x(0) = x_0\\
    \dot{y} = -\sigma x - y - xZ, & y(0) = y_0\\
    \dot{Z} = -bZ + XY - b(r+\sigma), & Z(0) = Z_0.
\end{cases}
    \end{equation}
In the above systems, \cref{eq:Lorenz-X,eq:Lorenz-Y,eq:Lorenz-Z}, correspond to the regimes when the observed values of $X$, $Y$, or $Z$ are inserted directly into the equations for the remaining, unobserved components. 
That is, e.g. in \cref{eq:Lorenz-X} we observe the true value of $X$ continuously in time and insert that into the evolution equations of $y$ and $z$. 
In these equations we note that numerically we should not be simulating the evolution of $X$, $Y$, or $Z$ as they appear in \cref{eq:Lorenz-X,eq:Lorenz-Y,eq:Lorenz-Z}, respectively, but rather these values are observed (given) from the true solution in \cref{eq:Lorenz} and are inserted directly into the equations for the remaining variables. 
Analysis on \cref{eq:Lorenz-X,eq:Lorenz-Y,eq:Lorenz-Z} was conducted in \cite{Hayden_2007}, where it was shown that the continuous coupling on $X$ or $Y$ alone was sufficient to obtain synchronization between the assimilated equations and the reference solution; however, the coupling seen in \cref{eq:Lorenz-Z} was not sufficient to obtain convergence in either the $x$ or $y$ components to the corresponding components $X$ or $Y$ of the true solution. We do note that in \cite{Hayden_2007} the equations for \cref{eq:Lorenz} were written in a slightly different form, however this can be easily reconciled by using the transformation $\tilde{Z} = Z + (r+\sigma)$. 

For the sake of completeness, we will discuss the convergence results for the above systems, \cref{eq:Lorenz-X,eq:Lorenz-Y,eq:Lorenz-Z} proven in \cite{Hayden_2007} and how the results break down in the absence of dissipation. Let 
\begin{equation}\label{eq:delta}
    w = (X-x,Y-y, Z-z),
\end{equation} where $(X,Y,Z)$ is the given solution of the original Lorenz system \cref{eq:Lorenz} and $(x,y,z)$ is the solution to one of the assimilated equation, \cref{eq:Lorenz-X,eq:Lorenz-Y,eq:Lorenz-Z}. Here we define $w$ with the understanding that the variable in $(x,y,z)$ missing from the assimilated equation is taken to be equal to the corresponding variable in \cref{eq:Lorenz}, e.g. when considering \cref{eq:Lorenz-X} we define $x(t):= X(t)$ for all time, $t$ and have $w = (0, Y-y, Z-z)$. The convergence results for the synchronization algorithms, \cref{eq:Lorenz-X,eq:Lorenz-Y,eq:Lorenz-Z} are given as follows:

\begin{theorem}[Hayden (2007) \cite{Hayden_2007}]\label{thm:Hayden}
    Let $(X,Y,Z)$, be the solution to \cref{eq:Lorenz}, then the following are true when $\sigma>0, b>0$, and $ r>0$.
    
    If $(x,y,z)$ is the solution to \cref{eq:Lorenz-X}, then
    \begin{equation}
        \norm{w(t)}^2_{\ell^2} \leq \norm{w_0}_{\ell^2}e^{-2t}.
    \end{equation}

    If $(x,y,z)$ is the solution to \cref{eq:Lorenz-Y}, then
    \begin{equation}\label{eq:J-bound}
    \begin{split}
        \norm{w(t)}^2_{\ell^2} &\leq  |X(0) - x(0)| e^{-2\sigma} + |Z(0) - z(0)| e^{-bt}\\
        &\quad + \frac{J|X(0)-x(0)|^2}{b(2\sigma - b)}\left(e^{-b t} - e^{-2\sigma t}\right),
    \end{split}
    \end{equation}
    where
    \begin{equation} 
        J = \begin{cases}
            r^2, & b \leq 2\\
            \frac{b^2 r^2}{4(b - 1)}, &b \geq 2
        \end{cases}
    \end{equation}
\end{theorem}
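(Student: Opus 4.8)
The plan is to derive the evolution equation for the error $w$ in each regime and then run an energy estimate tailored to the coupling structure. I would first treat the $X$-observation scheme \cref{eq:Lorenz-X}. Setting $x \equiv X$ so that $w = (0,\,Y-y,\,Z-z)$, subtracting \cref{eq:Lorenz-X} from \cref{eq:Lorenz} yields a system that is linear in the error,
\begin{equation*}
\frac{d}{dt}(Y-y) = -(Y-y) - X(Z-z), \qquad \frac{d}{dt}(Z-z) = -b(Z-z) + X(Y-y).
\end{equation*}
The decisive observation is that, because the \emph{same} observed value $X$ multiplies both error components, the nonlinear cross-terms cancel identically in the $\ell^2$ energy balance: testing against $(Y-y,\,Z-z)$ gives
\begin{equation*}
\frac{1}{2}\frac{d}{dt}\norm{w}^2_{\ell^2} = -(Y-y)^2 - b(Z-z)^2 \le -\min(1,b)\,\norm{w}^2_{\ell^2}.
\end{equation*}
Gronwall's inequality then delivers exponential decay at rate $2\min(1,b)$, which is the stated $e^{-2t}$ once $b \ge 1$ (as for the standard Lorenz parameters); I note the right-hand side of the claimed bound should carry $\norm{w_0}^2_{\ell^2}$ rather than $\norm{w_0}_{\ell^2}$.

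For the $Y$-observation scheme \cref{eq:Lorenz-Y}, I would set $y \equiv Y$ so that $w = (X-x,\,0,\,Z-z)$. The key structural feature here is \emph{triangularity}: subtracting the $x$-equation from the $X$-equation, the common $\sigma Y$ forcing cancels and the first error decouples completely, $\tfrac{d}{dt}(X-x) = -\sigma(X-x)$, so that $(X-x)(t)^2 = |X(0)-x(0)|^2 e^{-2\sigma t}$. The third error is then governed by a \emph{linear, non-autonomous} equation forced by this known quantity,
\begin{equation*}
\frac{d}{dt}(Z-z) = -b(Z-z) + Y\,(X-x).
\end{equation*}
I would estimate $(Z-z)^2$ directly: multiplying by $(Z-z)$ and applying Young's inequality to the product $Y(X-x)(Z-z) \le \tfrac{b}{2}(Z-z)^2 + \tfrac{1}{2b}Y^2(X-x)^2$ gives
\begin{equation*}
\frac{d}{dt}(Z-z)^2 \le -b(Z-z)^2 + \frac{1}{b}\,Y^2\,|X(0)-x(0)|^2 e^{-2\sigma t}.
\end{equation*}
Inserting a uniform bound $Y^2 \le J$ and integrating via Duhamel's formula produces the convolution $\int_0^t e^{-b(t-s)}e^{-2\sigma s}\,ds = (e^{-bt}-e^{-2\sigma t})/(2\sigma - b)$, which is exactly the last term of \cref{eq:J-bound}; together with the $e^{-bt}$ decay of the initial $(Z-z)^2$ this reproduces the claimed estimate (again with the norms understood as squared).

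The main obstacle is the derivation of the constant $J$, i.e.\ a sharp uniform-in-time bound on $Y^2$ along trajectories of \cref{eq:Lorenz}. I would obtain this from the standard absorbing-ball argument: testing \cref{eq:Lorenz} against a shifted Lyapunov functional of the form $X^2 + Y^2 + (Z-c)^2$ and choosing $c$ to annihilate the cubic $XYZ$ contributions yields a differential inequality whose completed square controls $Y$ asymptotically. The piecewise definition of $J$ (the split at $b=2$ and the factor $b^2/(4(b-1))$, which equals $1$ at $b=2$) reflects the optimization of this estimate over the two regimes $b \le 2$ and $b \ge 2$; verifying that this optimization yields precisely the stated constants, and that the bound holds uniformly on the absorbing set rather than merely in the long-time limit, is the step requiring the most care. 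It is worth emphasizing that the entire argument is contingent on the \emph{dissipation} present in every component of \cref{eq:Lorenz}: it is exactly the linear damping coefficients $\sigma$, $1$, and $b$ that furnish the exponential factors above, which foreshadows the breakdown of the scheme once that dissipation is removed.
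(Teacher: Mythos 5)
Your proposal is correct and takes essentially the same approach as the paper: subtract the assimilated system from \cref{eq:Lorenz}, exploit the cancellation (for the $X$-scheme) and triangular (for the $Y$-scheme) structure in the componentwise sum-of-squares estimates, close with Gr\"onwall/Duhamel, and control the nonlinear term in the $z$-equation by the uniform bound $Y^2 \le J$. The only divergence is that the paper simply cites \cref{eq:Y-bound} from Doering--Gibbon rather than re-deriving $J$ via a Lyapunov-functional argument, and your remarks about the squared norms and the $2\min(1,b)$ decay rate correspond to typographical imprecisions in the quoted statement rather than gaps in your argument.
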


The proofs of \cref{thm:Hayden} are derived by taking the difference of the assimilated equation (\cref{eq:Lorenz-X} or \cref{eq:Lorenz-Y}) with \cref{eq:Lorenz}, summing the squares of the derivatives, and applying Gr\"onwall's inequality to the resulting quantity. The $J$ estimate in \cref{eq:J-bound} arises from the use of the bound
\begin{equation}\label{eq:Y-bound}
    \limsup_{t\to\infty} \left(Y(t)\right)^2 \leq J,
\end{equation}
which was derived in \cite{Doering_Gibbon_1995_book} and was used to bound the contribution from the nonlinear term in the $z$ equation. 
It is worth noting that the bound \cref{eq:Y-bound} is valid uniformly and globally in time for solutions in the global attractor that we intend to recover using data assimilation algorithms.

The analysis done in the proof of \cref{thm:Hayden} is dependent on the particular parameters and requires $\sigma>0$, $b >0$, and $r>0$. To illustrate how these sorts of estimates break down  in the absence of dissipation, we consider the case when both $X$ and $Y$ are observed, i.e., are given from the true solution of \cref{eq:Lorenz}.  This leads to the following system of equations
\begin{equation}\label{eq:Lorenz-XY}
    \begin{cases}
    \dot{X} = -\sigma X + \sigma Y, & X(0) = X_0\\
    \dot{Y} = -\sigma X - Y - XZ, & Y(0) = Y_0\\
    \dot{z} = -bz + XY - b(r+\sigma), & z(0) = z_0.
\end{cases}
\end{equation}
Now, if we take $w = z - Z$, where $Z$ solves \cref{eq:Lorenz}, we see from \cref{eq:Lorenz,eq:Lorenz-XY} that $w$ satisfies the following equation
\begin{equation}\label{eq:Lorenz-w}
   \dot{w} = -bw, \quad w(0) = z_0 - Z_0.
\end{equation}
From here, we can see that if $b>0$, we can multiply both sides by $w$ and apply Gr\"onwall's inequality to obtain
\begin{equation}
    |w(t)|^2 \leq |w(0)|^2e^{-bt},
\end{equation}
which yields convergence of $z$ to $Z$ exponentially fast in time. If we instead consider $b=0$, removing the dissipation on the $Z$ component of \cref{eq:Lorenz}, then we now have that $\dot{w} = 0$, and so
    $w(t) = z_0 - Z_0$ holds for all time.
Therefore, we have that the absence of dissipation on the $Z$ component prevents synchronization between the assimilated solution and the reference solution, even when both the $X$ and $Y$ components are observed continuously in time.

The above discussion has been on the synchronization data assimilation scheme, given by the direct insertion, as it was first introduced in \cite{Olson_Titi_2008_TCFD}, of the observed quantities into the model equations. We now consider the nudged Lorenz system for the observed $X$ and $Y$ components of \cref{eq:Lorenz}, given as follows:
\begin{equation}\label{eq:Lorenz:nudge}
    \begin{cases}
         \dot{x} = -\sigma x + \sigma y + \mu(X - x)\\
    \dot{y} = -\sigma x - y - xz + \mu(Y-y)\\
    \dot{z} = -bz + xy - b(r+\sigma)
    \end{cases}
\end{equation}
Here $\mu > 0$ is the nudging coefficient, which controls the strength of the feedback-control terms. For simplicity, we utilize the same nudging parameter, $\mu$, in the equations for $x$ and $y$, however this is not strictly necessary. 

At any fixed time, $t$, the solutions, $(x(t;\mu), y(t;\mu), z(t;\mu))$, to the nudged system, \cref{eq:Lorenz:nudge}, are expected to converge to the solution $(X(t),Y(t), z(t))$ of the synchronization algorithm, given by \cref{eq:Lorenz-XY} as $\mu \to \infty$. This can be justified rigorously by applying the argument in \cite{Carlson_Farhat_Martinez_Victor_2024_ISYNC} to the Lorenz 1963 system, provided both methods are initialized with the same initial data. An analogous statement was made in \cite{Peng_Wu_Shiue_2023} for the Lorenz system in particular (with $b>0$), where it was shown that the solution, $x(t;\mu)$, of \cref{eq:Lorenz:nudge}, converges to $X(t)$ at any fixed time, $t$, as we send $\mu \to \infty$ when the nudging term is present only on the equation for $x$ in \cref{eq:Lorenz:nudge}.  In the case where $b = 0$, if we assume \textit{a priori} that the solutions  of \cref{eq:Lorenz:nudge} converge to those of \cref{eq:Lorenz-XY} as $\mu \to \infty$, one can deduce using the reverse triangle inequality, that $\norm{w(t)}^2_{\ell^2} \geq |z_0 - Z_0|^2$, where $w$ is given as defined previously in \cref{eq:delta}. Therefore, we expect the nudged Lorenz system, \cref{eq:Lorenz:nudge}, to fail to recover the true solution in the same way that \cref{eq:Lorenz-XY} fails when dissipation is removed from the equations for $z$.

We now demonstrate numerically that this dissipation mechanism (when $\sigma, b > 0$) of the Lorenz system is necessary for data assimilation methods to work. In particular, we will be considering a partially dissipative variant of the Lorenz system by taking $b = 0$, i.e., eliminating the dissipation mechanism only on the $Z$ component. The analytical results regarding the nudging algorithm applied to the Lorenz equations have been done when $b>0$ and are qualitatively similar to those we reference above in \cref{thm:Hayden} from \cite{Hayden_2007}, see e.g. \cite{Blocher_Martinez_Olson_2018, Peng_Wu_Shiue_2023,Carlson_Hudson_Larios_Martinez_Ng_Whitehead_2022}. That is, the solutions of \cref{eq:Lorenz:nudge} synchronize with those of \cref{eq:Lorenz} exponentially fast in time for large enough nudging parameter values and with the exponential rate depending on the particular parameters of \cref{eq:Lorenz}. While nudging is only required on either the $X$ or $Y$ components in order to drive convergence, we will utilize nudging on both the $X$ and $Y$ components to emphasize that, even with additional nudging, the method is not effective when the dissipation on the $z$-component is eliminated (i.e. when $b = 0$).

The Lorenz system is implemented numerically as in \cite{Blocher_Martinez_Olson_2018, Peng_Wu_Shiue_2023} using an explicit Euler scheme for the time discretization.
All simulations depicted in this study were performed Matlab (R2023a) using our own code.
As in \cite{Peng_Wu_Shiue_2023}, we will utilize $\Delta t = 10^{-4}$ for our timestep.  
In many works the reference solution is initialized by running the solution forward in time until it is approximately on the global attractor (see e.g. \cite{Blocher_Martinez_Olson_2018,Hayden_Olson_Titi_2011}), however as we are attempting to test nudging for non-dissipative or partially dissipative systems, which need not possess global attractors, we will instead be using the initial data $(X,Y,Z) = (30,40,50)$ as in \cite{Peng_Wu_Shiue_2023}.

First, we verify the validity of nudging on the Lorenz system with full dissipation. Using the initial data and parameter choices discussed previously we apply nudging to the Lorenz system with $b = 8/3$. The results can be seen in \Cref{fig:lorenz:dissipative}, where we have applied nudging to only the first two components of \cref{eq:Lorenz:nudge}. We see that all variables converge to an error level of approximately $10^{-12}$ with dips below this error level in each component. These dips are expected due to the oscillations of the reference solution. Note that here we see exponential convergence to the true solution, as we expect in this case.

\begin{figure}
    \centering
    \includegraphics[width=0.95\linewidth]{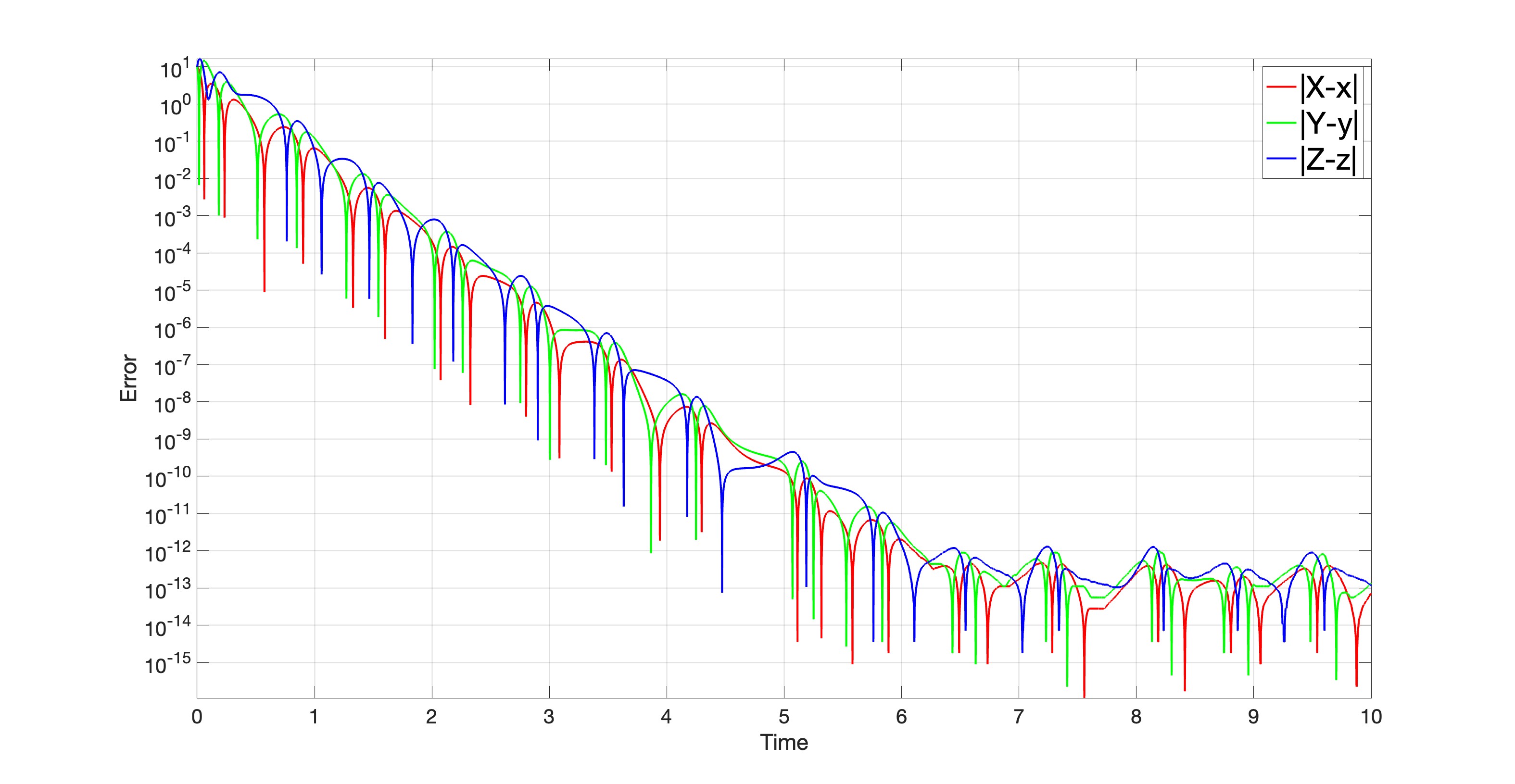}
    \caption{Absolute error for each component over time for the application of nudging to the Lorenz system with $\mu = 10$. Initial data is $(X,Y,Z) = (30, 40, 50)$ and $(x,y,z) = (20,30,40)$. Here $b = 8/3$.}
    \label{fig:lorenz:dissipative}
\end{figure}

Next, we examine what happens when we apply nudging for the partially dissipative Lorenz system, i.e., with $b = 0$. The results can be seen in \Cref{fig:lorenz:nondissipative}. Note that here we still see exponential decay to the true solution in both $x$ and $y$. While nudging appears to affect $z$ initially, this effect quickly fades after a short interval of time and the error appears to stabilize to be approximately constant $10^1$. The error level that is attained appears to be directly linked to the difference in the initial conditions for the $z$ component. To illustrate this, we plotted the  $\ell^2$ (Euclidean norm) error over time when both the reference and nudged equations are initialized with the same initial data, except for the $z$ component. Here  $Z(t_0) = 50$ and $z(t_0) = 50 + \delta$, where $\delta$ ranges in values from $10^{-1}$ to $10^{-12}$. 
The results of these tests can be seen in \cref{fig:lorenz:sensitivity} and they illustrate that the $\ell^2$ error is highly sensitive to small perturbations in the initial condition  for $z$. The behavior seen in \cref{fig:lorenz:sensitivity} appears to align with what one should expect from our earlier discussion of \cref{eq:Lorenz-w} for the synchronization algorithm. We obtain an initial exponential decay in the error in the $z$ component which is driven by the convergence in both $x$ and $y$. This initial convergence levels off and becomes approximately constant over time.

\begin{figure}
    \centering
    \includegraphics[width=0.95\linewidth]{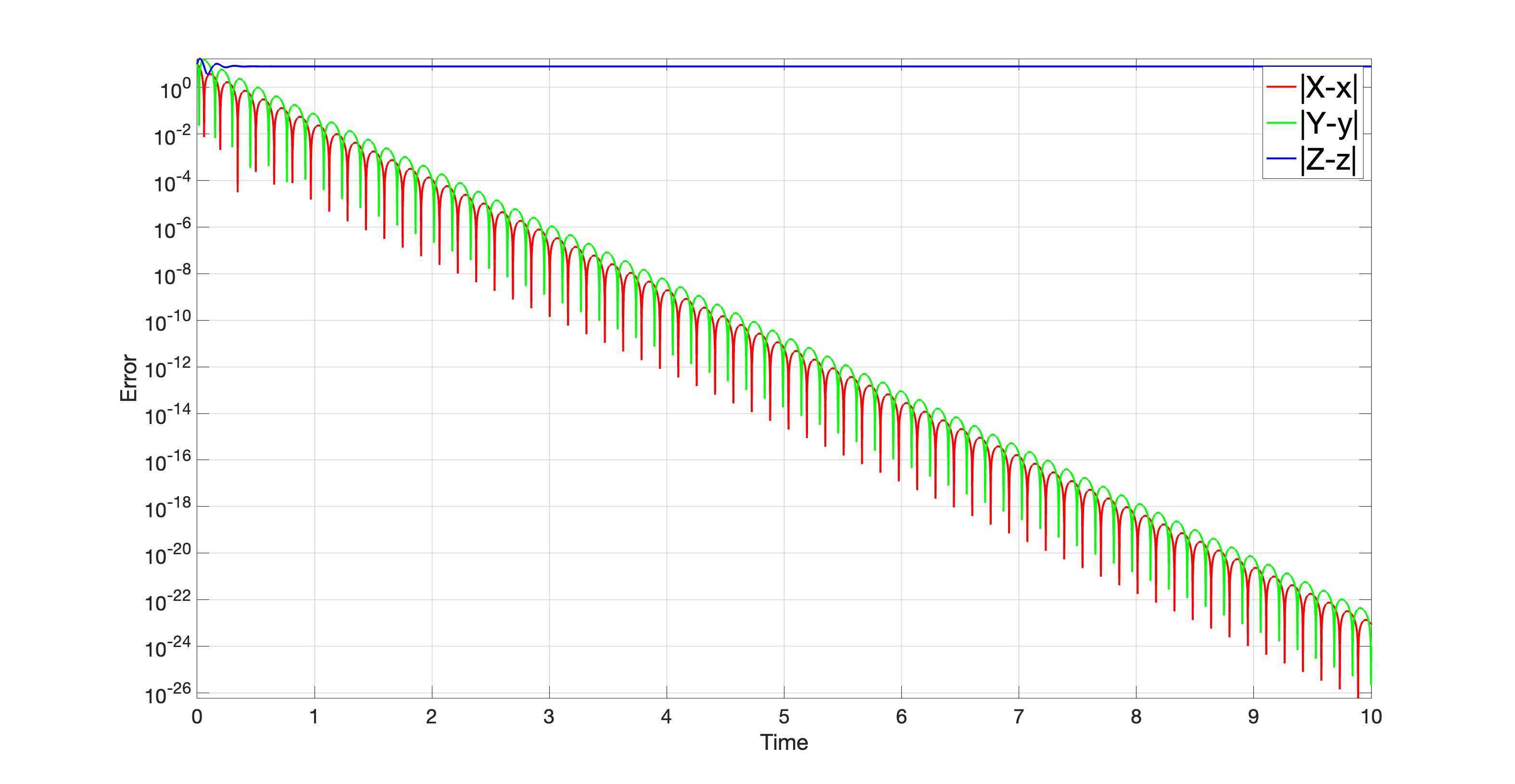}
    \caption{Absolute error for each component over time for the application of nudging to the Lorenz system with $\mu = 10$. Initial data is $(X,Y,Z) = (30, 40, 50)$ and $(x,y,z) = (20,30,40)$. Here $b = 0$.}
    \label{fig:lorenz:nondissipative}
\end{figure}

\begin{figure}
    \centering
    \includegraphics[width=0.95\linewidth]{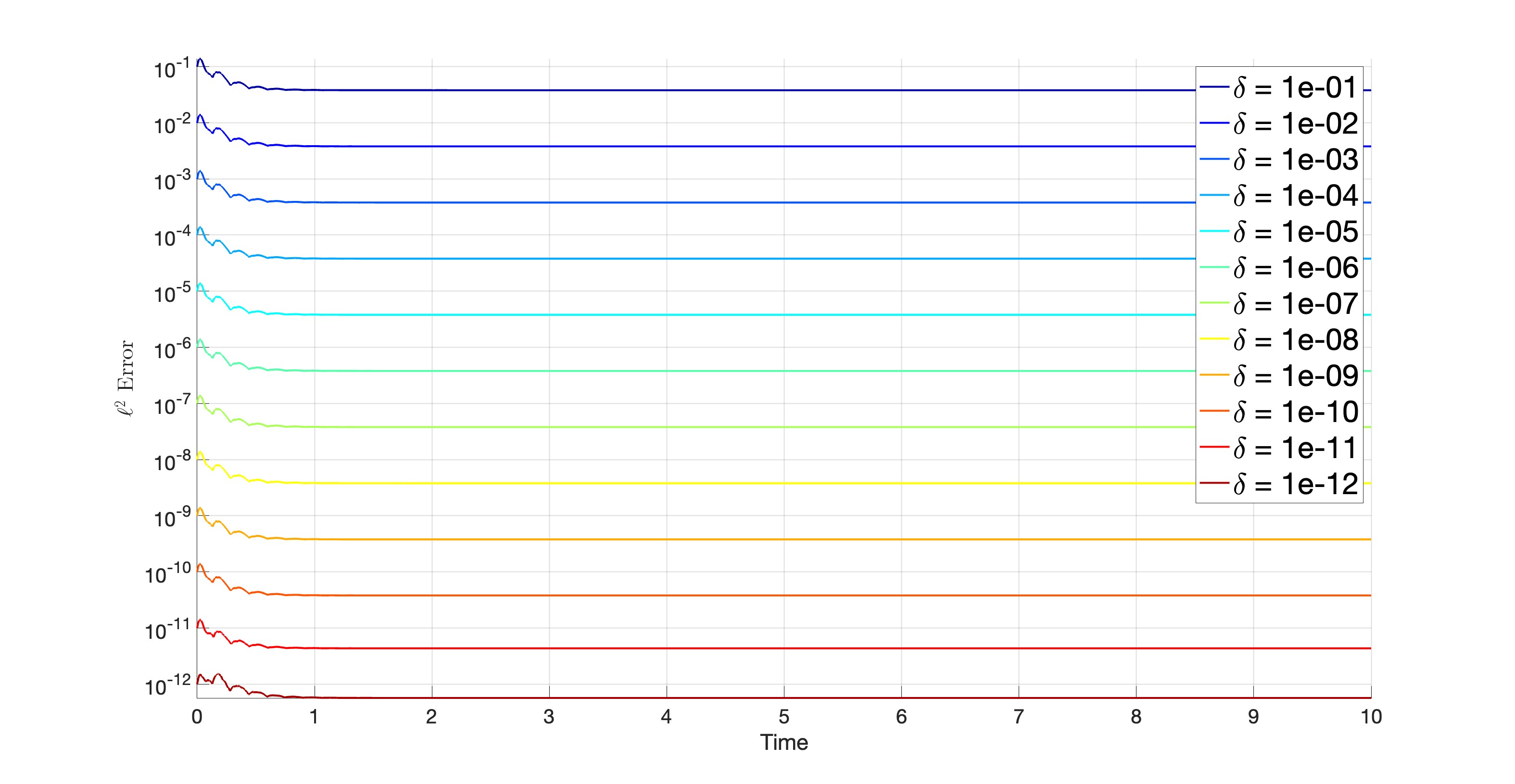}
    \caption{Effect of perturbations of the true initial condition on convergence for nudging for Lorenz. Here $Z(t_0) = 50$ and $z(t_0) = 50 + \delta$, with $X(t_0) = x(t_0) = 20$ and $Y(t_0) = y(t_0) = 30$ with $\mu = 10$. Parameters used were $\sigma = 10$, $\rho = 28$, and $b = 0$.}
    \label{fig:lorenz:sensitivity}
\end{figure}

\section{1D PDEs - Korteweg-de Vries}

\subsection{Preliminaries and Analytical Results}\label{sect:KdV:preliminaries}

Generally speaking, the nudging algorithm is described as follows. Consider the following evolution system:
\begin{equation}\label{eq:generic system}
u_t = F(u),
\end{equation}
where here $F$ is a given differential operator that is potentially nonlocal and nonlinear. We will specifically be considering solutions of \cref{eq:generic system} that lie in, e.g., the phase space $L^2$ subject to periodic boundary conditions. 

In the remainder of this section we will utilize $H^m_{\text{per}}$, with $m$ being a non-negative integer, to denote the closure of the set of $L$-periodic trigonometric polynomials on $\mathbb{R}$, with respect to the standard Hilbert norm:
\begin{equation}
    \norm{f}_{H^{m}_{\text{per}}} := \left(\sum_{k = 0}^m \int_0^L \abs{\frac{\partial^k f}{\partial x^k} }^2dx\right)^{\frac{1}{2}}. 
\end{equation}
Here $H^0_{\text{per}} = L^2_{\text{per}}$, with $L^2_{\text{per}}$ denoting the space of $L^2$ functions subject to $L$-periodic boundary conditions.
For simplicity of notation we will drop the subscript ``per'' as we will only be considering periodic functions in this work.

We assume that \cref{eq:generic system} is globally well-posed  and for such systems we define the related nudged system, introduced in \cite{Azouani_Olson_Titi_2014,Azouani_Titi_2014}, as follows:
\begin{equation}
v_t = F(v) + \mu I_h(u-v).
\end{equation}
Here $I_h$ is a linear spatial interpolant with associated length scale $h$ and $\mu$ is a constant feedback-control nudging parameter. For our purposes we will consider only $I_h := P_M$, where $P_M$ is given to be a projection operator onto the lowest $M$ Fourier modes, i.e. $h\sim \frac{L}{M}$, where $L$ is the period for the underlying evolutionary equation \cref{eq:generic system}. As we require $P_M u$ to be well defined we will only consider solutions to \cref{eq:generic system} that are in $L^2$ for all positive time.

An important property that is often required for the success of data assimilation algorithms is that the underlying system has finitely many determining modes. 
This property was first introduced in \cite{Foias_Prodi_1967} for the 2D incompressible Navier-Stokes equations and later extended to other dissipative systems, see e.g. \cite{Jones_Titi_1992, Cockburn_Jones_Titi_1997,Foias_Manley_Rosa_Temam_2001}. 
We now give the definition of this property.
\begin{definition}[Determining modes, as given in \cite{Foias_Prodi_1967, Jones_Titi_1992, Cockburn_Jones_Titi_1997,Foias_Manley_Rosa_Temam_2001}]
    We say that \cref{eq:generic system} has {\bfseries finitely many determining modes} if there exists a positive integer $M>0$ such that for any two given solutions $u_1$ and $u_2$ of \cref{eq:generic system} the following holds:\\
    If 
    \begin{equation}
        \lim_{t\to \infty}\norm{P_M(u_1(t) - u_2(t))}_{L^2} = 0,
    \end{equation}
    then 
    \begin{equation}
        \lim_{t\to \infty} \norm{u_1(t) - u_2(t)}_{L^2} = 0.
    \end{equation}
    Here $P_M$ is given to be the orthogonal projection onto the lowest $M$ Fourier modes.
\end{definition}

For the purpose of this study we will let $F$ result in the KdV equation:
\begin{equation}\label{eq:KdV}
    u_t + uu_x + \delta^2 u_{xxx} = 0.
\end{equation}
Here $u: \mathbb{R} \times [0,\infty) \to \mathbb{R}$ represents the amplitude of a wave and $\delta$ is the dispersive coefficient. We consider the KdV equation equipped with the periodic boundary condition $u(x,t) = u(x+2,t)$, for all $x\in \mathbb{R}$ and all time $t\geq 0$ with spatial mean zero. Hence, the period in this case is $L = 2$. Choosing the length of the spatial domain to be 2 is arbitrary and was chosen as this choice of domain appears in the literature, see, e.g., \cite{Zabusky_Kruskal_1965}.

The corresponding nudged system is given as follows:
\begin{equation}\label{eq:KdV nudged}
    v_t + vv_x + \delta^2 v_{xxx} = \mu P_M(u - v).
\end{equation}
Here $\mu > 0$ is a feedback-control nudging parameter and $P_M$ is given to be the projection operator onto the lowest $M$ Fourier modes. Here $u$ is the unknown reference solution of \cref{eq:KdV}, for which the first $M$ Fourier modes are observed (given) and we aim to reproduce it.

Below and through the remainder of the paper, we will use the notation $\hat{f}$ to reference the Fourier modes of an arbitrary function $f$ in $L^2$. That is, 
\begin{equation}
    \hat{f}(k,t):= \int_{0}^2 f(x,t) e^{-\pi ik\cdot x}dx.
\end{equation}
For simplicity of presentation, we omit the time index and, unless otherwise stated, we write $\hat{f}_k := \hat{f}(k,t)$.

We will now state and prove some propositions that will be useful in our numerical results.
\begin{proposition}\label[proposition]{prop:1}
    Let $k\in \mathbb{Z}\setminus \{0\}$ and $\phi \in L^2$ with $\phi$ having spatial mean zero. The following hold true:
    \begin{enumerate}[label=(\roman*)]
        \item $\phi(x + \frac{L}{k}) = \phi(x)$ for a.e. $x\in \mathbb{R}$ if and only if
        \begin{equation}
            \phi(x) = \sum_{n\in k\mathbb{Z}\setminus\{0\}} \hat{\phi}_n e^{i \frac{2\pi}{L} n x}.
        \end{equation}

        \item Let $M$ be a positive integer and let $k$ be an integer satisfying $\abs{k}\geq M$. Suppose $\phi(x + \frac{L}{k}) = \phi(x)$ for a.e. $x\in \mathbb{R}$, then
        \begin{equation} P_M \phi = 0 \end{equation}
    \end{enumerate}
\end{proposition}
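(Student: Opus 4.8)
The plan is to reduce everything to the Fourier expansion of $\phi$ and exploit the uniqueness of Fourier coefficients. Since $\phi \in L^2$ is $L$-periodic with zero spatial mean, I would begin by writing $\phi(x) = \sum_{n \in \mathbb{Z}\setminus\{0\}} \hat{\phi}_n e^{i\frac{2\pi}{L}nx}$, where the $n = 0$ term is absent precisely because of the mean-zero hypothesis, i.e. $\hat{\phi}_0 = 0$. Translating by $L/k$ and using $e^{i\frac{2\pi}{L}n(x+L/k)} = e^{i\frac{2\pi}{L}nx}\,e^{i\frac{2\pi n}{k}}$, I would record that the shifted function $\phi(\cdot + L/k)$ has Fourier coefficients $\hat{\phi}_n\,e^{i\frac{2\pi n}{k}}$.

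For part (i), both implications follow from a single computation. By completeness and orthogonality of the trigonometric system, $\phi(x+L/k) = \phi(x)$ for a.e. $x$ holds if and only if the two series agree coefficient-by-coefficient, that is, $\hat{\phi}_n\bigl(e^{i\frac{2\pi n}{k}} - 1\bigr) = 0$ for every $n$. Since $e^{i\frac{2\pi n}{k}} = 1$ exactly when $k \mid n$, this forces $\hat{\phi}_n = 0$ for all $n \notin k\mathbb{Z}$, which together with $\hat{\phi}_0 = 0$ is exactly the assertion that $\phi(x) = \sum_{n \in k\mathbb{Z}\setminus\{0\}} \hat{\phi}_n e^{i\frac{2\pi}{L}nx}$. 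Reading the equivalence in the reverse direction yields the converse, since a series supported on $k\mathbb{Z}$ is manifestly invariant under translation by $L/k$.

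For part (ii), I would simply invoke part (i): the hypothesis $\phi(x+L/k)=\phi(x)$ means every nonzero Fourier coefficient of $\phi$ is supported at an index $n \in k\mathbb{Z}\setminus\{0\}$, so the smallest frequency present in magnitude satisfies $|n| \geq |k| \geq M$. Because $P_M$ retains only the low modes below this threshold, all surviving coefficients vanish and $P_M\phi = 0$.

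The computations are entirely routine; the only point requiring care is bookkeeping of conventions. I would make explicit precisely which indices $P_M$ keeps (namely the modes with $|n| < M$, so that any frequency of size $\geq M$ is discarded) and confirm that this index range is consistent with the hypothesis $|k| \geq M$, so that the smallest nonzero frequency $|k|$ of $\phi$ is indeed annihilated. I would likewise check that the Fourier normalization matches the definition of $\hat{f}_k$ fixed above, so that ``$k \mid n$'' corresponds correctly to period-$L/k$ invariance. Beyond pinning down these index ranges, no genuine analytic obstacle arises.
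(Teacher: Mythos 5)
Your proof is correct and is exactly the argument the paper leaves implicit: the paper's own proof dismisses (i) as ``trivially true'' and derives (ii) from (i) together with $\hat{\phi}_0 = 0$, which is precisely your coefficient-comparison computation spelled out. Your caution about whether $P_M$ retains the modes $\abs{n}\leq M$ or $\abs{n}<M$ is warranted and worth keeping --- the paper itself is loose on this point (its \Cref{prop:4} instead assumes the strict inequality $M<\abs{k}$), and the convention you adopt is the one under which part (ii) holds exactly as stated.
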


\begin{proof}
The proof of part (i) is trivially true and part (ii) follows directly from part (i) using the fact that $\hat{\phi}_0 = 0$, as $\phi$ has zero spatial mean.
\end{proof}

\begin{proposition}\label[proposition]{prop:2}
    Suppose $k\in \mathbb{Z}\setminus \{0\}$ and $u^{in}, f \in H^2$ such that $f(x+\frac{L}{k}) = f(x)$ and $u^{in}(x+\frac{L}{k}) = u^{in}(x)$ for all $x\in \mathbb{R}$ and $u^{in}$ and $f$ both have spatial mean zero. Let $u(x,t)$ be the corresponding solution of the damped and driven KdV equation:
    \begin{equation}\label{eq:KdV damped}
        u_t + uu_x + \delta^2 u_{xxx} + \gamma u = \beta f,\\
        u(x,0) = u^{in}(x),
    \end{equation}
    with $\gamma \geq 0$ and $\abs{\beta} \geq 0$, subject to periodic boundary conditions with period $L$.

    Then $u(x+\frac{L}{k},t) = u(x,t)$ for a.e. $x\in \mathbb{R}$ and all $t\in \mathbb{R}$.
    Moreover, for every $0<T<\infty$, the solution $u$, restricted to the time interval $[-T,T]$, is continuous and bounded, as in 
    \cite{Bona_Smith_1975}.
\end{proposition}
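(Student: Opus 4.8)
The plan is to exploit the spatial translation invariance of the damped and driven KdV equation together with uniqueness of solutions. Write $a := L/k$ and let $\tau_a$ denote the shift $(\tau_a \phi)(x) := \phi(x + a)$. The key observation is that every term of \cref{eq:KdV damped} apart from the forcing commutes with $\tau_a$: because $\tau_a$ merely translates the spatial variable and the operators $\partial_t$, $\partial_x$, $\partial_x^3$, and the nonlinear map $\phi \mapsto \phi\,\partial_x\phi$ all have $x$-independent coefficients, one has $\partial_t(\tau_a u) = \tau_a(u_t)$, $(\tau_a u)(\tau_a u)_x = \tau_a(uu_x)$, and $\partial_x^3(\tau_a u) = \tau_a(u_{xxx})$.

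First I would set $v := \tau_a u$ and apply $\tau_a$ to \cref{eq:KdV damped}. Using the commutation identities above yields
\begin{equation*}
v_t + vv_x + \delta^2 v_{xxx} + \gamma v = \beta\, \tau_a f = \beta f,
\end{equation*}
where the final equality uses the hypothesis $f(x + \tfrac{L}{k}) = f(x)$, i.e. $\tau_a f = f$. For the initial datum, the periodicity hypothesis on $u^{in}$ gives $v(\cdot,0) = \tau_a u^{in} = u^{in}$. Hence $v$ solves exactly the same initial value problem as $u$. Since translation is an isometry of each $H^m_{\text{per}}$ and preserves the periodic, mean-zero structure (recall that integrating \cref{eq:KdV damped} over a period shows the spatial mean of $u$ stays zero), $v$ lies in precisely the same solution class as $u$.

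Next I would invoke uniqueness for \cref{eq:KdV damped}. By the global well-posedness theory for the (damped and driven) KdV equation in the relevant Sobolev class, as established in \cite{Bona_Smith_1975} for the conservative equation and carried over to the damped--driven setting, the initial value problem admits a unique solution on $[-T,T]$ for each $0<T<\infty$. Applying uniqueness to $u$ and $v$, which share the same data and satisfy the same equation, gives $v = u$; that is, $u(x + \tfrac{L}{k}, t) = u(x,t)$ for a.e. $x$ and all $t \in [-T,T]$, and letting $T \to \infty$ yields the claim for all $t \in \mathbb{R}$. The continuity and boundedness of $u$ on $[-T,T]$ asserted in the ``moreover'' clause is exactly the regularity furnished by the well-posedness result of \cite{Bona_Smith_1975}.

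The main obstacle is not the symmetry bookkeeping, which is routine, but ensuring that the uniqueness statement is available in the function space where the solution actually lives. One must confirm that the $H^2$ data produce a solution that is unique within a class closed under $\tau_a$ (so that $v$ is an admissible competitor), and that the well-posedness is genuinely two-sided in time so as to reach all $t \in \mathbb{R}$ rather than only $t \geq 0$; both facts are supplied by the cited theory, so the argument reduces to the verification that $\tau_a u$ satisfies the identical problem.
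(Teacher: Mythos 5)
Your proposal is correct and follows essentially the same route as the paper's proof: define the shifted function $v(x,t) = u(x+\tfrac{L}{k},t)$, observe that the periodicity of $f$ and of $u^{in}$ makes $v$ a solution of the same initial value problem, and conclude $v = u$ by uniqueness, with the continuity and boundedness on $[-T,T]$ supplied by Theorem 10 of \cite{Bona_Smith_1975}. Your additional care about translation commuting with the operators and about the solution class being closed under the shift is a more explicit rendering of the same argument, not a different one.
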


\begin{proof}
    Let $v(x,t) = u(x+\frac{L}{k},t)$ and notice that 
    $$v(x,0) = v^{in}(x) = u^{in}(x+\frac{L}{k},0) = u^{in}(x).$$
    Moreover, because of the $\frac{L}{k}$ periodicity of $f(x)$ one has that $v(x,t)$ is also a solution of \cref{eq:KdV damped}. As $v(x,0) = u^{in}(x)$, then by the uniqueness of solutions to \cref{eq:KdV damped} we obtain that $v(x,t) = u(x,t)$, hence we have that $u(x+\frac{L}{k},t) = u(x,t)$. The fact that $u$ restricted to $[-T,T]$ is continuous and bounded follows immediately from Theorem 10 in \cite{Bona_Smith_1975}.
\end{proof}

\begin{corollary}\label[corollary]{cor}
    Under the conditions of \Cref{prop:2} the space of periodic functions with period $\frac{L}{k}$ is invariant under the solution of \cref{eq:KdV damped}.
\end{corollary}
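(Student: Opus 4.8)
The plan is to read this corollary as a restatement of \Cref{prop:2} in the language of invariant subspaces, so that essentially all of the analytic work has already been carried out. First I would fix the forcing $f$ to be an $H^2$, mean-zero function satisfying $f(x+\tfrac{L}{k}) = f(x)$ — this is exactly one of the standing hypotheses of \Cref{prop:2} — and introduce the set
\begin{equation}
V_k := \left\{\phi \in H^2 : \int_0^L \phi\,dx = 0 \ \text{ and } \ \phi(x+\tfrac{L}{k}) = \phi(x) \text{ for a.e. } x\in\mathbb{R}\right\},
\end{equation}
which is a closed subspace of the phase space. I would then recall that a subspace is said to be \emph{invariant} under the flow generated by \cref{eq:KdV damped} precisely when initial data lying in $V_k$ produces a solution that remains in $V_k$ for all $t$.

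The core step is then immediate: for arbitrary $u^{in}\in V_k$, every hypothesis of \Cref{prop:2} is satisfied, and its conclusion gives $u(x+\tfrac{L}{k},t) = u(x,t)$ for a.e. $x$ and all $t$, so $u(\cdot,t)$ has period $\tfrac{L}{k}$ for each fixed $t$. To conclude $u(\cdot,t)\in V_k$ I would separately check that the zero-mean property persists: integrating \cref{eq:KdV damped} over one full period and using periodicity to annihilate the contributions of $uu_x = \tfrac12(u^2)_x$ and $\delta^2 u_{xxx}$, together with $\int_0^L f\,dx = 0$, leaves $\tfrac{d}{dt}\overline{u} + \gamma\,\overline{u} = 0$ for the spatial mean $\overline{u}(t)$, whence $\overline{u}(0)=0$ forces $\overline{u}(t)=0$ for all $t$. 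Since a $\tfrac{L}{k}$-periodic function has zero mean over $[0,L]$ if and only if it has zero mean over $[0,\tfrac{L}{k}]$, this keeps the solution in $V_k$.

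I do not anticipate a genuine obstacle, since the only nontrivial ingredient — propagation of the spatial $\tfrac{L}{k}$-periodicity in time — was already secured in \Cref{prop:2} through uniqueness of solutions to \cref{eq:KdV damped}. The one point requiring mild care is that the forcing $f$ must be held fixed and chosen $\tfrac{L}{k}$-periodic throughout: invariance here is a statement about the solution operator $u^{in}\mapsto u(\cdot,t)$ associated with this fixed forcing, and were $f$ to fail to share the period $\tfrac{L}{k}$ the subspace $V_k$ would not be preserved. Thus the corollary follows simply by packaging the conclusion of \Cref{prop:2} as the assertion that $V_k$ is invariant under the solution map of \cref{eq:KdV damped}.
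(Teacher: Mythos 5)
Your proposal is correct and follows exactly the paper's (implicit) argument: the corollary is stated without a separate proof precisely because it is just \Cref{prop:2} repackaged as an invariance statement for the subspace of $\frac{L}{k}$-periodic functions, which is what you do. Your additional verification that the spatial mean stays zero (so the solution remains in the class where the hypotheses of \Cref{prop:2} apply) is a sensible extra detail the paper leaves unstated, and your computation of it is correct.
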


\begin{proposition}\label[proposition]{prop:4}
    Suppose $k\in \mathbb{Z}\setminus \{0\}$ is given and suppose $u^{in}_j \in H^2$, for $j = 1,2$, satisfying
    \begin{equation}
        u^{in}_j(x+\frac{L}{k}) = u_j^{in}(x)
    \end{equation}
    for a.e. $x\in \mathbb{R}$ and $u_j^{in}$ has spatial mean zero. Moreover, assume $\norm{u^{in}_1}_{L^2} \neq \norm{u^{in}_2}_{L^2}$ and let $u_j(x,t)$ be the corresponding solutions of \cref{eq:KdV}. Then
    \begin{enumerate}[label=(\roman*)]
        \item $u_j(x+\frac{L}{k},t) = u_j(x,t)$ for a.e. $x\in \mathbb{R}$ and for $j = 1,2$ and all $t\in \mathbb{R}$.
        \item $P_M u_1(\cdot,t) = P_M u_2(\cdot,t)$ for any positive integer $M < \abs{k}$, and every $t\in \mathbb{R}$.
        \item $\limsup\limits_{t\to \infty} \norm{u_1(\cdot, t) - u_2(\cdot, t)}_{L^2} > 0.$
    \end{enumerate}
\end{proposition}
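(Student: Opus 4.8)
The plan is to treat the three claims in order, with (i) and (ii) following almost immediately from the preceding results and (iii) resting on the conservation of the $L^2$ norm that is special to the inviscid, undamped KdV equation \cref{eq:KdV}. For part (i), I would invoke \Cref{prop:2} (equivalently \Cref{cor}) with $\gamma = 0$ and $\beta = 0$, under which the damped and driven equation \cref{eq:KdV damped} reduces to \cref{eq:KdV}. Since each $u_j^{in}$ is $\frac{L}{k}$-periodic with spatial mean zero, the uniqueness of solutions forces $u_j(x + \frac{L}{k}, t) = u_j(x,t)$ for a.e.\ $x \in \mathbb{R}$ and all $t \in \mathbb{R}$, which is exactly (i); \Cref{prop:2} simultaneously supplies the global-in-time existence and the boundedness on every finite interval that the later parts implicitly require.

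For part (ii), I would first check that the spatial mean of each $u_j$ is conserved: integrating \cref{eq:KdV} over one period and using periodicity to annihilate both $\int_0^L u_j (u_j)_x\,dx = \tfrac{1}{2}\int_0^L ((u_j)^2)_x\,dx$ and $\delta^2\int_0^L (u_j)_{xxx}\,dx$ gives $\frac{d}{dt}\int_0^L u_j\,dx = 0$, so $u_j(\cdot,t)$ stays mean-zero. Combined with the $\frac{L}{k}$-periodicity from (i), part (ii) of \Cref{prop:1} applies — the hypothesis $M < \abs{k}$ ensures the required inequality — and yields $P_M u_j(\cdot,t) = 0$ for each $j$ and every $t$. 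In particular $P_M u_1(\cdot,t) = 0 = P_M u_2(\cdot,t)$, which establishes (ii).

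Part (iii) is the crux, and is where the absence of dissipation is decisive. The key point is that \cref{eq:KdV} conserves the $L^2$ norm: testing against $u_j$ and integrating by parts, the nonlinear term $\int_0^L (u_j)^2 (u_j)_x\,dx = \tfrac{1}{3}\int_0^L ((u_j)^3)_x\,dx$ and the dispersive term $\delta^2 \int_0^L u_j (u_j)_{xxx}\,dx$ both vanish by periodicity, so $\frac{d}{dt}\norm{u_j(\cdot,t)}_{L^2}^2 = 0$ and hence $\norm{u_j(\cdot,t)}_{L^2} = \norm{u_j^{in}}_{L^2}$ for all $t$. The reverse triangle inequality together with the standing hypothesis $\norm{u_1^{in}}_{L^2} \neq \norm{u_2^{in}}_{L^2}$ then gives
\[
\norm{u_1(\cdot,t) - u_2(\cdot,t)}_{L^2} \geq \abs{\,\norm{u_1^{in}}_{L^2} - \norm{u_2^{in}}_{L^2}\,} > 0
\]
uniformly in $t$, so the $\limsup$ is bounded below by this fixed positive constant, proving (iii).

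The only real subtlety — rather than a genuine obstacle — is justifying the conservation identities (for both the mean and the $L^2$ norm) rigorously instead of merely formally, since the integrations by parts demand enough regularity to be legitimate. I would resolve this by appealing to the $H^2$ regularity of the data and the well-posedness and regularity theory quoted from Bona--Smith (Theorem 10 in \cite{Bona_Smith_1975}) already cited in \Cref{prop:2}, which furnishes solutions smooth and bounded enough on each finite interval to validate the energy computations; the conserved $L^2$ norm in turn rules out finite-time blow-up and secures the global existence needed to speak of $\limsup_{t\to\infty}$.
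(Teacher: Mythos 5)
Your proposal is correct and follows essentially the same route as the paper: part (i) via \Cref{prop:2} with $\gamma = \beta = 0$, part (ii) via \Cref{prop:1}, and part (iii) via conservation of the $L^2$ norm together with the reverse triangle inequality. The only difference is that you fill in details the paper leaves implicit (persistence of the zero spatial mean needed to invoke \Cref{prop:1}, and the integration-by-parts justification of $L^2$ conservation), which strengthens rather than alters the argument.
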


\begin{proof}
    As \cref{eq:KdV damped} is a particular case of \cref{eq:KdV damped} (i.e. $\gamma = 0, \beta = 0$), then (i) follows immediately from \Cref{prop:2}. Part (ii) then follows immediately from part (i) combined with \Cref{prop:1}.

Let us now prove part (iii). Let $u_1$ and $u_2$ be two solutions to \cref{eq:KdV} under the assumptions of the proposition. Without loss of generality, suppose $\norm{u_1^{\text{in}}}_{L^2} > \norm{u_2^{\text{in}}}_{L^2}$. We recall that the $L^2$ norm is a conserved quantity for \cref{eq:KdV} and thus for $j = 1,2 $ we have that $\norm{u_j(t)}_{L^2} = \norm{u_j^{\text{in}}}_{L^2}$ holds for all $t\in \mathbb{R}$. The result follows from an application of the reverse triangle inequality as
\begin{equation}
    0< \norm{ u_1(\cdot, t)}_{L^2} - \norm{u_2(\cdot,t)}_{L^2} \leq \norm{ u_1(\cdot,t) - u_2(\cdot,t)}_{L^2}
\end{equation}
holds for all time $t\in \mathbb{R}$.

\end{proof}

\begin{remark}
\Cref{prop:4} can be adjusted without change to allow $u_1$ and $u_2$ to be initialized with periods $\frac{L}{k_1}$ and $\frac{L}{k_2}$, respectively. The only changes in this case are that $u_j$ remain $\frac{L}{k_j}$ periodic for all $t\in \mathbb{R}$ in (i) and $M < \min\{\abs{k_1},\abs{k_2}\}$ is required for (ii).    
\end{remark}

Now, using these propositions, we show that \cref{eq:KdV} does not have the finitely many determining modes property.

\begin{theorem}\label{thm:determining modes}
The Korteweg-de Vries equation \cref{eq:KdV} subject to periodic boundary conditions with period $L$ does not possess the finitely many determining modes property in $L^2$.    
\end{theorem}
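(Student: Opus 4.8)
The plan is to directly negate the definition of finitely many determining modes. That definition asserts the \emph{existence} of a single threshold $M$ for which convergence of the first $M$ modes forces full $L^2$ convergence; to refute it I must therefore exhibit, for \emph{every} positive integer $M$, a pair of solutions $u_1,u_2$ of \cref{eq:KdV} that agree on their first $M$ Fourier modes for all time yet fail to converge to one another in $L^2$. Since \Cref{prop:4} is tailored to manufacture exactly such a pair, the proof reduces to choosing the data correctly and invoking it.

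First I would fix an arbitrary positive integer $M$ and set $k = M+1$, so that $\abs{k} > M$. Next I would select two initial profiles that are $\frac{L}{k}$-periodic, lie in $H^2$, have zero spatial mean, and carry distinct $L^2$ norms. The explicit choice $u_j^{in}(x) = a_j \sin(\pi k x)$ with $\abs{a_1} \neq \abs{a_2}$ suffices: recalling that $L = 2$, so $\frac{2\pi}{L} = \pi$, these functions have period $\frac{2}{k} = \frac{L}{k}$, vanishing spatial mean, and are smooth, hence in $H^2$, while $\norm{u_1^{in}}_{L^2} \neq \norm{u_2^{in}}_{L^2}$ because the two amplitudes differ in modulus.

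With this data in hand, I would apply \Cref{prop:4} to the corresponding solutions $u_1,u_2$ of \cref{eq:KdV}. Part (ii) gives $P_M u_1(\cdot,t) = P_M u_2(\cdot,t)$ for every $t$, whence $P_M(u_1 - u_2) \equiv 0$ and in particular $\lim_{t\to\infty} \norm{P_M(u_1(t) - u_2(t))}_{L^2} = 0$, so the hypothesis of the determining-modes property holds. Part (iii) simultaneously yields $\limsup_{t\to\infty} \norm{u_1(\cdot,t) - u_2(\cdot,t)}_{L^2} > 0$, so the conclusion of that property fails. As $M$ was arbitrary, no threshold $M$ can possibly work, and therefore \cref{eq:KdV} does not possess the finitely many determining modes property in $L^2$.

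I do not anticipate a genuine analytical obstacle, since the two propositions already encode the two facts that drive the argument: the conservation of the $L^2$ norm along solutions of \cref{eq:KdV}, and the invariance of the subspace of $\frac{L}{k}$-periodic functions under the flow, which forces the lowest $M < \abs{k}$ modes of the difference to vanish identically. The only point demanding care is the logical one: the determining-modes property is existentially quantified over $M$, so its negation is a universal statement over all $M$, and the counterexample must accordingly be produced \emph{for each} $M$ by choosing $k > M$, rather than for a single fixed $M$.
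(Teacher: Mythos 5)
Your proposal is correct and follows essentially the same route as the paper: both arguments set $k = M+1$, pick two $\frac{L}{k}$-periodic, zero-mean, smooth initial profiles with distinct $L^2$ norms (the paper uses the zero solution and $\cos(\frac{2\pi}{L}kx)$; you use $a_j\sin(\pi k x)$ with $\abs{a_1}\neq\abs{a_2}$), and invoke \Cref{prop:4}(ii)--(iii) to show the determining-modes hypothesis holds while its conclusion fails. The only cosmetic difference is that the paper phrases this as a proof by contradiction for the putative threshold $M$, whereas you directly negate the existential quantifier by producing a counterexample for every $M$ --- logically the same argument.
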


\begin{proof}
    The proof of this follows straightforwardly from \Cref{prop:4}. Assume, by way of contradiction, that the KdV equation enjoys the finitely many determining Fourier modes property in $L^2$. That is, there exists some $M \in \mathbb{N}$ such that for any two solutions $u_1$ and $u_2$ to \cref{eq:KdV} that if
    \begin{equation}
        \lim_{t\to \infty}\norm{P_M(u_1(t) - u_2(t))}_{L^2} = 0,
    \end{equation}
    then 
    \begin{equation}
        \lim_{t\to \infty} \norm{u_1(t) - u_2(t)}_{L^2} = 0.
    \end{equation}

We simply use $k = M + 1$, and consider $u_1$ and $u_2$ to be any two solutions to \Cref{eq:KdV} such that $u_1$ and $u_2$ both are smooth enough and have zero spatial mean, are $\frac{L}{k}$ periodic initially, and $\norm{u_1^{in}}_{L^2} \neq \norm{u_2^{in}}_{L^2}$. Two such solutions must exist  as, e.g., one could take $u_1$ to be the zero solution and $u_2$ to be the solution generated from the initial data $u_2^{in} = \cos(\frac{2\pi}{L} k x).$

Now, we immediately obtain a contradiction using \Cref{prop:4}.
\end{proof}

\begin{remark}
    The proof of the above can be adapted almost without change to other non-dissipative equations with a similar nonlinearity and conservation of the $L^2$ norm or a similar quantity.
    We will show in \cref{sect:Euler} that a similar statement works for the 2D incompressible Euler equations.
  
\end{remark}

\subsection{Computational Results}\label{sect:KdV:computational results}

In this section we will detail the numerical methods we utilize to simulate \cref{eq:KdV,eq:KdV nudged}. 
For this computational study we implemented the KdV equation on $\mathbb{R}$, subject to periodic B.C. with period 2 using pseudo-spectral methods. 
Specifically, we implement the equations using an integrating factor method to compute the linear term exactly with a fourth-order Runge-Kutta scheme for the timestepping with the nonlinear term computed explicitly. 
For an overview of integrating factor schemes see e.g. \cite{Kassam_Trefethen_2005, Trefethen_2000_spML} and the references contained within. 
Our particular choice of numerical scheme, which is given \cref{alg:IFRK4}, was chosen as it allows for larger timesteps than traditional schemes for KdV, such as the Zabusky-Kruskal explicit leapfrog scheme seen in the literature \cite{Zabusky_Kruskal_1965}.

In the simulations of  \cref{eq:KdV}, depicted below, we used different spatial resolutions depending on the particular choice of initial profile and dispersion coefficient, $\delta^2$. We ensured in our simulations that $\Delta t \lesssim {\Delta x}^{2} $, which was consistent with the implementation  of this timestepping scheme  in chapter 10 of  \cite{Trefethen_2000_spML} and appeared to be numerically stable for all spatial resolutions we used  in this work.  Here $\Delta t = t_{n+1} - t_n$ and $\Delta x = \frac{2}{N}$ represent the temporal and spatial resolution scales with $N$ indicating the spatial resolution, given in powers of $2$. While we do not explicitly state the spatial resolution for each simulation, one can see in the spectral plots, i.e. 
in \cref{fig:zero separation,fig:zero control,fig:solenoid control,fig:large time failure,fig:large time success,fig:damped and driven},
that the x-axis covers $[1, N/2]$. $N$ ranges between $2^7$ and $2^{11}$, depending on the amount of Fourier modes required to accurately simulate a given solution. 
$N$ is chosen specifically to ensure that the solution is well resolved, which here means that Fourier coefficients of the reference solution decay to machine precision (given by $\ep = 2.216\times 10^{-16}$) before the dealiasing cutoff, i.e. the vertical red line featured in 
\cref{fig:zero separation,fig:zero control,fig:solenoid control,fig:large time failure,fig:large time success,fig:damped and driven}.
We utilize 2/3 truncation  for dealiasing to accurately calculate the nonlinear term, i.e., we set the last 1/3 of the Fourier modes to zero to prevent aliasing effects. 
Additionally, while the red vertical line on spectral plots (see 
\cref{fig:damped and driven:b,fig:damped and driven:d,fig:large time failure:b,fig:large time success:b} 
denotes the dealiasing cutoff, the vertical blue line denotes the observational window cutoff, that is, the Fourier modes to the left of and including the blue vertical line are observed and used in the computation of the nudging term.

\begin{algorithm}[H]
    \caption{Fourth Order Runge-Kutta with Integrating Factor Scheme  with Nudging Term}
    \label{alg:IFRK4}
\begin{algorithmic}[1] 
\Require
Observational operator $H:= P_M$ (projection onto the lowest $M$ Fourier modes), \\
Nudging parameter $\mu$,\\
Nonlinear function $N(u,t)$,\\
Linear function $L := -\delta^2 i k^3 + \gamma$ for integrating factor,\\
${u}_n$ and ${v}_n$, the value of solutions $u$ and $v$ at previous iteration,\\
$\hat{u}_n = \mathcal{F}(u_n)$ and $\hat{v}_n = \mathcal{F}(v_n)$ represent the Fourier transforms of $u_n$ and $v_n$, respectively. Note that $\hat{v}_{n}(k)$ is a function of the wave index, $k$, and the indices present here indicate the current timestep.
\bigskip

\State Forward time integration $\hat{u}_{n+1} := M(\hat{u}_n).$
\begin{align*}
k_1 &= N(\hat{u}_n,t_n)\\
k_2 &= N(e^{-L\Delta t}\left(\hat{u}_n + \frac{\Delta t}{2}k_1\right),t_n + \frac{\Delta t}{2})\\
k_3 &= N(e^{-L\Delta t}\left(\hat{u}_n + \frac{\Delta t}{2}k_2\right),t_n + \frac{\Delta t}{2})\\
k_4 &= N(e^{-L\Delta t} \hat{u}_n + e^{-L\frac{\Delta t}{2}}\Delta t k_3),t_n+ \Delta t)\\
\hat{u}_{n+1} &= e^{-L\Delta t}\hat{u}_{n} + \frac{\Delta t}{6}\left( e^{-L\Delta t}k_1 + 2e^{-L\frac{\Delta t}{2}}\left(k_2 + k_3\right) + k_4\right)
\end{align*}

\State Nudging forward time integration
\begin{align*}\hat{v}_{n+1} = M(\hat{v}_n) + e^{-L\Delta t}\mu \Delta t  P_M\left( \hat{u}_n - \hat{v}_n\right)\end{align*}
\end{algorithmic}
\end{algorithm}

The nudging term is implemented numerically in \cref{alg:IFRK4} as an explicit term. 
This implementation of the feedback-control term that we used induces a CFL constraint required for the numerical scheme to remain stable, $\mu \lesssim \frac{2}{\Delta t}$. 
This CFL constraint can be removed by using an implicit formulation of the nudging term, which we do not consider in this work. 
It is worth noting that the nudging term cannot be implemented numerically using multi-stage methods, such as RK4, due to the need to interpolate in time (see e.g. \cite{Olson_Titi_2008_TCFD}).
The simulations featured in this work primarily use a value of $\mu = 100$, which was found to be sufficient for this study. 
For an in-depth examination of the role the value of the nudging parameter plays in convergence, see \cite{Carlson_Farhat_Martinez_Victor_2024_ISYNC}.

To test the effectiveness of nudging on the KdV equation \cref{eq:KdV} we used an ``identical twin'' experimental design. 
This experimental design is standard in the validation of data assimilation methods, where we first initialize a reference solution. 
This solution will be treated as the true solution which we will attempt to recover using observations. 
The observations will be given by the lowest $M$ Fourier modes. The observations are considered to be perfect with no distortions (i.e., no errors in the measurements) or stochastic noise.

It is worth mentioning that typically in the data assimilation literature for dissipative evolutionary equations, the equations are evolved forward in time long enough such that the solution remains in an absorbing set in a relevant phase space.  That is, we run solutions long enough forward in time such that they are uniformly bounded with respect to time in $L^2$, $H^1$, or another relevant norm for all future times, see e.g., \cite{Robinson_2001,Temam_1997_IDDS} for a full definition of absorbing sets as well as their properties. In this study we do not evolve the equations forward in time to large times, as solutions are not dissipative and so there is not necessarily an absorbing set that attracts all bounded solutions after a long enough time. In fact, as the $L^2$ norm is a conserved quantity for \cref{eq:KdV}, such an absorbing set cannot exist in $L^2$. We opted to simply evolve forward the initial profile a given amount of time, typically $t = 1$, in order to ensure that the reference solution has developed interesting dynamics prior to the observational period outside the observation domain. While solutions can develop small length scales not captured by the observed modes, they typically have a finite number of active Fourier modes, with the total number of modes active at a given time dependent on the initial condition, the current time, the total momentum in the system, and the dispersion parameter, $\delta^2$. 
An active Fourier mode in this sense means that $\abs{\hat{u}_k} \gg \ep$ or $\abs{\hat{u}_k} > 10^j \ep$ for some reasonable $j$, where $k\in \mathbb{Z}$ is the wavemode index and $\ep$ indicates machine precision.

\subsubsection{Classical KdV equation}\label{sect:KdV}
In this section we will examine how the data assimilation nudging algorithm fails for \Cref{eq:KdV}. The extent to which it works appears to be determined by whether the active Fourier modes of the initial profile are contained in the annulus of the observed Fourier modes, as we will see later on in \cref{fig:large time success}. We will start by considering examples such as in \Cref{prop:4} to illustrate how nudging fails.

To find a case where nudging fails, let us consider a solution initialized on a single wavemode
\begin{equation}
    u^{in}(x) = c \cos(k_0\pi x), k_0 \in \mathbb{Z}\setminus\{0\},\label{eq:cosk_0}
\end{equation}
where $c \in \mathbb{R}$ is an arbitrary constant with $c\neq 0$.
Note that by \cref{prop:2} we know that as this solution is initially $\frac{2}{k_0}$ periodic, it will remain $\frac{2}{k_0}$ periodic for all time $t\in \mathbb{R}$.

Using initial data of the form \cref{eq:cosk_0} we can now construct solutions to \cref{eq:KdV} which the nudging algorithm \cref{eq:KdV nudged} explicitly fails to recover. Given a number of observed modes $M$, we can set $k_0 \in \mathbb{Z} $ such that $\abs{k_0} \geq M+1$. Again, we emphasize that solutions initialized in this way (see \cref{eq:cosk_0}) are initially $\frac{2}{k_0}$ periodic and remain $\frac{2}{k_0}$ periodic due to \cref{prop:2}.
Noting that $M< k_0$, it follows immediately from \cref{prop:4} that $P_M u(t) \equiv 0$ where $u$ is the solution of \cref{eq:KdV} generated from the initial data prescribed by \cref{eq:cosk_0}. This shows that a fixed number of observed Fourier modes are insufficient to distinguish the zero solution from solutions initialized with suitable high frequency oscillations, and so one should not expect data assimilation to succeed. Indeed, if we choose $v$ to be the solution of \cref{eq:KdV nudged} initialized by the identically zero solution, then it is easy to deduce that $P_M (u-v) \equiv 0$ holds for all time and so the solution to \cref{eq:KdV nudged} simply corresponds to a solution to \cref{eq:KdV}, albeit with a different initial condition than the reference solution.

One can see computationally in \cref{fig:zero separation} that nudging fails to recover the solution $u(x,t)$, of \cref{eq:KdV} generated from the initial data prescribed by \cref{eq:cosk_0}. 
In \cref{fig:zero separation} the solution, $v(x,t)$, of \cref{eq:KdV nudged} is initialized with the identically zero solution. 
Exactly as one expects, we find that $v \equiv 0$ holds for the entire simulation. 
Similarly, we see in \cref{fig:zero control} the roles of $u$ and $v$ are reversed. 
That is, we instead initialize the solution to $v$ with \cref{eq:cosk_0} and the reference solution, $u$, is initialized with the identically zero solution. 
We see in \cref{fig:zero control} similar behavior occurs, as the feedback-control term remains zero for the entire simulation and both equations evolve as solutions of \cref{eq:KdV} with different initial conditions. 
While this is expected, it is nevertheless an interesting result as it shows that the initialization of the assimilated solution to \cref{eq:KdV nudged} matters. 
This fact is fundamentally different to many analytical results regarding the application of the nudging algorithm to dissipative equations, where the effect of the initial condition fades given enough time, contrary to what occurs in this case.

The previous counterexample illustrates that data assimilation methods cannot uniquely determine the observed reference solution when the observations are identically zero, but one may hope that more physically realistic (i.e. not identically zero) observations will yield better results.
This is true to some extent, however the level of success is highly dependent on the initial profile of the reference solution and one can similarly construct solutions that exhibit high frequency oscillations that nudging cannot recover. 
If given an analytic solution $u$ in $L^2$ of \cref{eq:KdV}, we can indeed construct infinitely many solutions where nudging fails to synchronize with the reference solution given a finite number of observed Fourier modes.

We will now demonstrate a general method of constructing solutions that nudging fails to synchronize.
Let $u$ be a solution to \cref{eq:KdV} such that $\norm{u^{in}}_{L^2} = K$. Here $u^{in} = u(0,x)$ is the initial condition of $u$. Note that the momentum is a conserved quantity of \Cref{eq:KdV}, and so $\norm{u}_{L^2} = K$ for all time. Now, let $M$ be given such that 
\begin{equation}\label{eq:M choice}
2\sum_{k \in \mathbb{Z}, \abs{k}\leq M} \abs{\hat{u}_k}^2 \geq K^2 - \ep.
\end{equation}
By Parseval's identity  $\norm{u}_{L^2}^2 = 2\sum_{k\in \mathbb{Z}} \abs{\hat{u}_k}^2 = K^2$. Here $\ep$ is a specified constant, for computations we will use $\ep \sim 10^{-16}$ to denote machine precision. The purpose of finding such an $M$ is to separate the wavemodes into active low modes and inactive high modes. At any fixed time $t>0$ such an $M$ is guaranteed to exist as $u$ is analytic and therefore the Fourier coefficients must decay exponentially fast in wavenumber.
In our computations we run only for a finite amount of time, so we take $M$ for $u$ to be the supremum over all such $M$'s taken across all simulated times. Without loss of generality we will assume that observation window consists of all $M$ modes, and so essentially the entire solution $u$ is known down to machine precision. We note here that $M$ is vital to the construction we outline below, thus the examples we construct work only for finite times. This is due to the fact that we require a uniform bound on the number of active wavemodes to extend this arbitrarily far in time.

Now, we can take the given solution $u$ and use it to generate artificial solutions for which nudging appears to fail. 
Consider the solution generated by the following initial data:
\begin{equation}\label{eq:general failure}
    \tilde{u}^{in} = c_1 u^{in} + c_2 \cos((2M+k)\pi x)
\end{equation}
Here $c_1$ and $c_2$ are positive constants chosen to enforce that $\norm{\tilde{u}^{in}}_{L^2} = K$ and $c_2 > \ep$. That is, we are modifying the initial profile to take some of the momentum out of the active modes and shifting it into to wavemode $2M+k$, where $k$ is an arbitrary non-negative integer. As $k$ is arbitrary, we can construct infinitely many such solutions, all of which have the same momentum as the given solution, $u$, but with a new active frequency in the initial profile. Note that the choice of using ${u}^{in}$ is arbitrary, and one can do this exact construction using the solution of $u$ evaluated at any given time.

We performed computational tests using the reference solution, $\tilde{u}$, of \cref{eq:KdV} with initial conditions prescribed by \cref{eq:general failure}. Here we took a solution, $u$, of \cref{eq:KdV} which was initialized simply with $u^{in} = \cos(\pi x)$. We found $M = 50$ was suitable for the construction in \cref{eq:general failure}. Once $M = 50$ was determined, we simply used $\tilde{u}^{in} =  \cos(\pi \cdot x) + \cos(100 \cdot \pi \cdot x)$, which was renormalized to ensure that 
$\norm{\tilde{u}^{in}}_{L^2} = \norm{u^{in}}_{L^2}$.
We see in \cref{fig:solenoid control} the result from applying nudging to attempt to recover the constructed reference solution $\tilde{u}$. Here the nudging solution, $v$, of \cref{eq:KdV nudged} is initialized to be identically zero.
We see in \cref{fig:solenoid control} that the low modes of $v$ synchronize with $\tilde{u}$ to a fixed level of precision, but the reference solution contains high frequency oscillations that $v$ never develops. The error in both the high and low modes appears approximately constant after a very brief initial decay in the observed modes. In additional tests (not depicted here) the same behavior in the $L^2$ error was observed up to time $t = 1000$ without any significant deviations in the error. One can clearly see activity in the frequencies immediately surrounding wavemodes $k = 100$ and $k = 200$ that is simply not present in the nudged solution. 
Not only is this high frequency activity never generated, but nudging features no mechanism to generate targeted high mode oscillations. That is, if one knew \emph{a priori} that the solution featured oscillations at wavemode $k = 100$, but one still only had observations of the first $50$ wavemodes, nudging does not have any mechanism to encourage oscillations at any unobserved frequencies.

It is worth mentioning that the construction of the initial profile in \cref{eq:general failure} is more complex than necessary. If one were to observe the first $M$ modes of a solution with a spike at wavemode $M+k$,  $k$ being a positive integer, then nudging would likely fail. Here, when we say ``spike'', we mean that looking at the $L^2$ spectrum one sees a local maximum at the specified wavemode. The construction of the initial profile in \cref{eq:general failure} is done to produce examples where one can see by looking at the spectrum that nudging fails to develop any of the high oscillations featured in the reference solution. Indeed, we see in \cref{fig:large time failure} that we see similar behavior in the $L^2$ error to that seen in \cref{fig:solenoid control}. In \cref{fig:large time failure} we initialize the reference solution, $u$, simply with $u^{in} = \cos(\pi \cdot x) + 0.001\cos(12\cdot \pi x)$, observing the first $10$ Fourier modes of $u$. The nudging solution, $v$, of \cref{eq:KdV nudged}, was initialized to be identically zero. We see in \cref{fig:large time failure} that the nudged solution still fails to develop all the high frequencies present in the reference solution, even at time $t = 1000$.

The simulations presented here indicate that nudging is ill-suited to capture extremely high oscillatory solutions to \cref{eq:KdV}, but it does not necessarily prohibit other methods of data assimilation from recovering the reference solution. Unlike in \cref{fig:zero control}, where the observed data is identically zero, it is not impossible in theory for data assimilation to tell the difference between two solutions initialized of the form \cref{eq:cosk_0}. That is, unlike the case of \cref{fig:zero control} if one considers two solutions $u_1$ and $u_2$ of \cref{eq:KdV} initialized as in \cref{eq:cosk_0} with separate $k_0$ values for $u_1$ and $u_2$, we have that $\norm{P_M(u_1 - u_2)}_{L^2}\neq 0$ for $t>0$, and to the best of our knowledge one cannot readily construct two such solutions of this form that always agree on every observed wavemode for all time.
This is due to the fact that dispersive term essentially causes the coefficients of the wavemodes to rotate in the complex plane around the origin with rotation speed $\delta^2k^3$. As each wavemode is rotating at a different speed one cannot rescale and shift momentum across wavemodes in a straightforward way such that the momentum backscattering due to the nonlinearity on the observed modes still agree at each timestep to produce a counterexample similar to the one present in \cref{fig:zero control,fig:zero separation}.

All of the cases where nudging fails are specific to the undamped and unforced, classical KdV \cref{eq:KdV}, which is not a dissipative dynamical system. The addition of damping will result in the energy being drained from these high oscillations, while the forcing injects energy into specified low wavemodes, resulting in nontrivial long time dynamics that eliminate these sorts of extremely high frequency oscillations, see \cref{sect:KdV damped} below.

\begin{figure}
    \centering
    \begin{subfigure}[t]{0.49\textwidth}
        \centering
        \includegraphics[width=\linewidth]{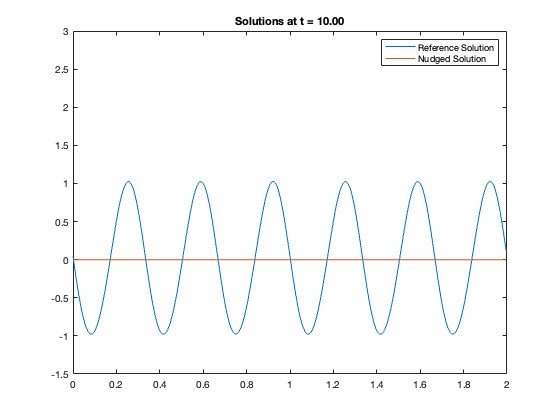}
        \label{fig:zero separation:a}
        \caption{Snapshot of solutions at time $t = 10$.}
        
    \end{subfigure}
    \hfill
    \begin{subfigure}[t]{0.49\textwidth}
        \centering
        \includegraphics[width=\linewidth]{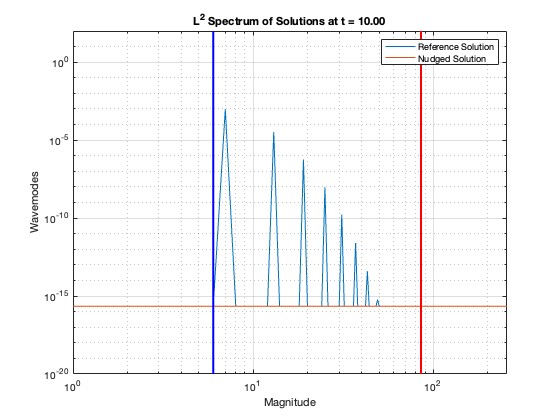}
        \label{fig:zero separation:b}
        \caption{Snapshot of $L^2$ spectrum of solutions at time $t = 10$.}
    \end{subfigure}

    \begin{subfigure}[t]{0.49\textwidth}
        \centering
        \includegraphics[width=\linewidth]{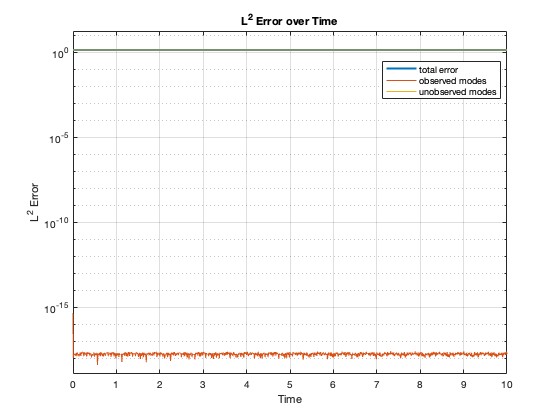}
        \caption{$\norm{u - v}_{L^2}$ over time split into observed low modes, unobserved high modes, and the total error.}
        \label{fig:zero separation:c}

    \end{subfigure}
    \hfill
    \begin{subfigure}[t]{0.49\textwidth}
        \centering
        \includegraphics[width=\linewidth,height=.24\textheight]{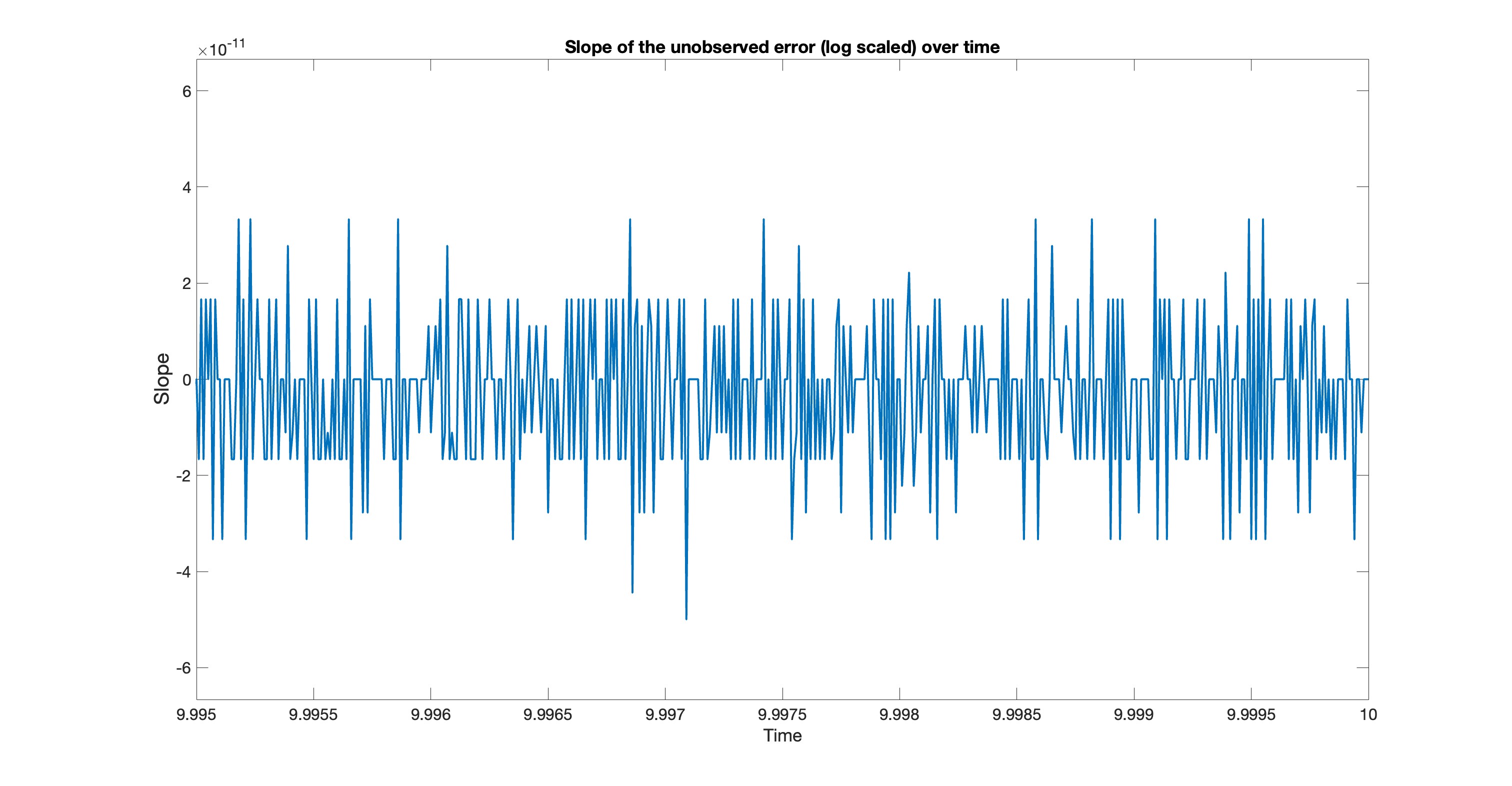}
        \caption{Approximate exponential decay rate of the high mode error for the last $500$ timesteps.}
        \label{fig:zero separation:d}
        
    \end{subfigure}
    
    \caption{Nudging with $5$ observed modes initialized as in \cref{eq:cosk_0} with $k_0 = 6$ fails to develop oscillations in the unobserved modes. Here $\mu = 100$ and $\delta = 0.075$.}
    \label{fig:zero separation}
\end{figure}

\begin{figure}
    \centering
    \begin{subfigure}[t]{0.48\textwidth}
        \centering
        \includegraphics[width=\linewidth]{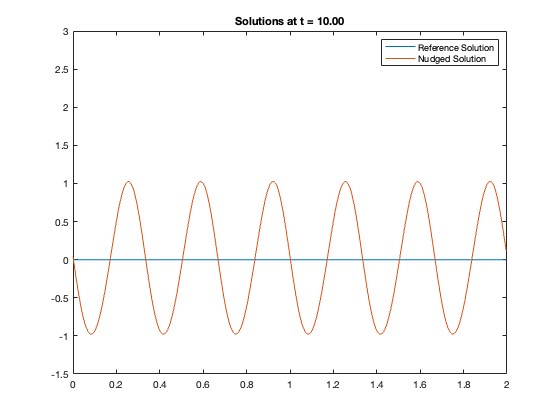}
        \caption{Snapshot of solutions at time $t= 10$.}
        \label{fig:zero control:a}

    \end{subfigure}
    \hfill
    \begin{subfigure}[t]{0.48\textwidth}
        \centering
        \includegraphics[width=\linewidth]{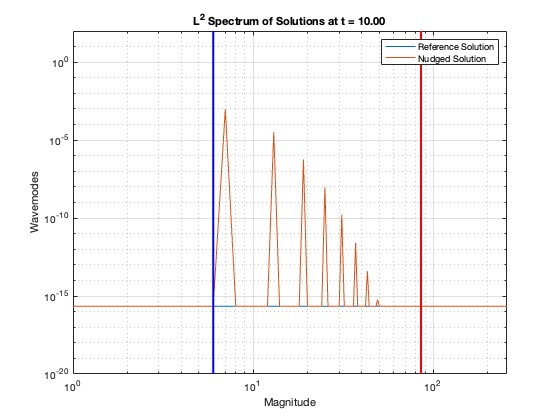}
        \caption{Snapshot of $L^2$ spectrum of solutions at time $t = 10$.}
        \label{fig:zero control:b}

    \end{subfigure}

    \begin{subfigure}[t]{0.48\textwidth}
        \centering
        \includegraphics[width=\linewidth]{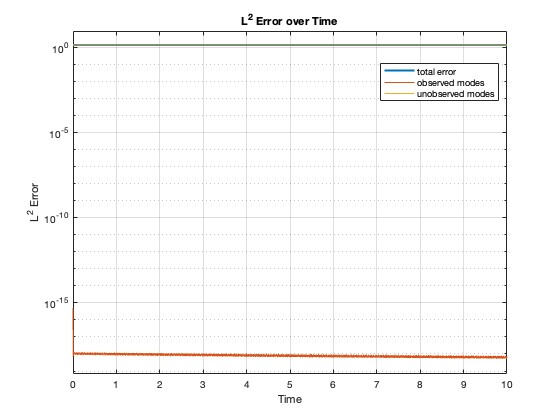}
        \caption{$\norm{u - v}_{L^2}$ over time split into observed low modes, unobserved high modes, and the total error. }        
        \label{fig:zero control:c}

    \end{subfigure}
    \hfill
    \begin{subfigure}[t]{0.48\textwidth}
        \centering
        \includegraphics[width=\linewidth,height=.24\textheight]{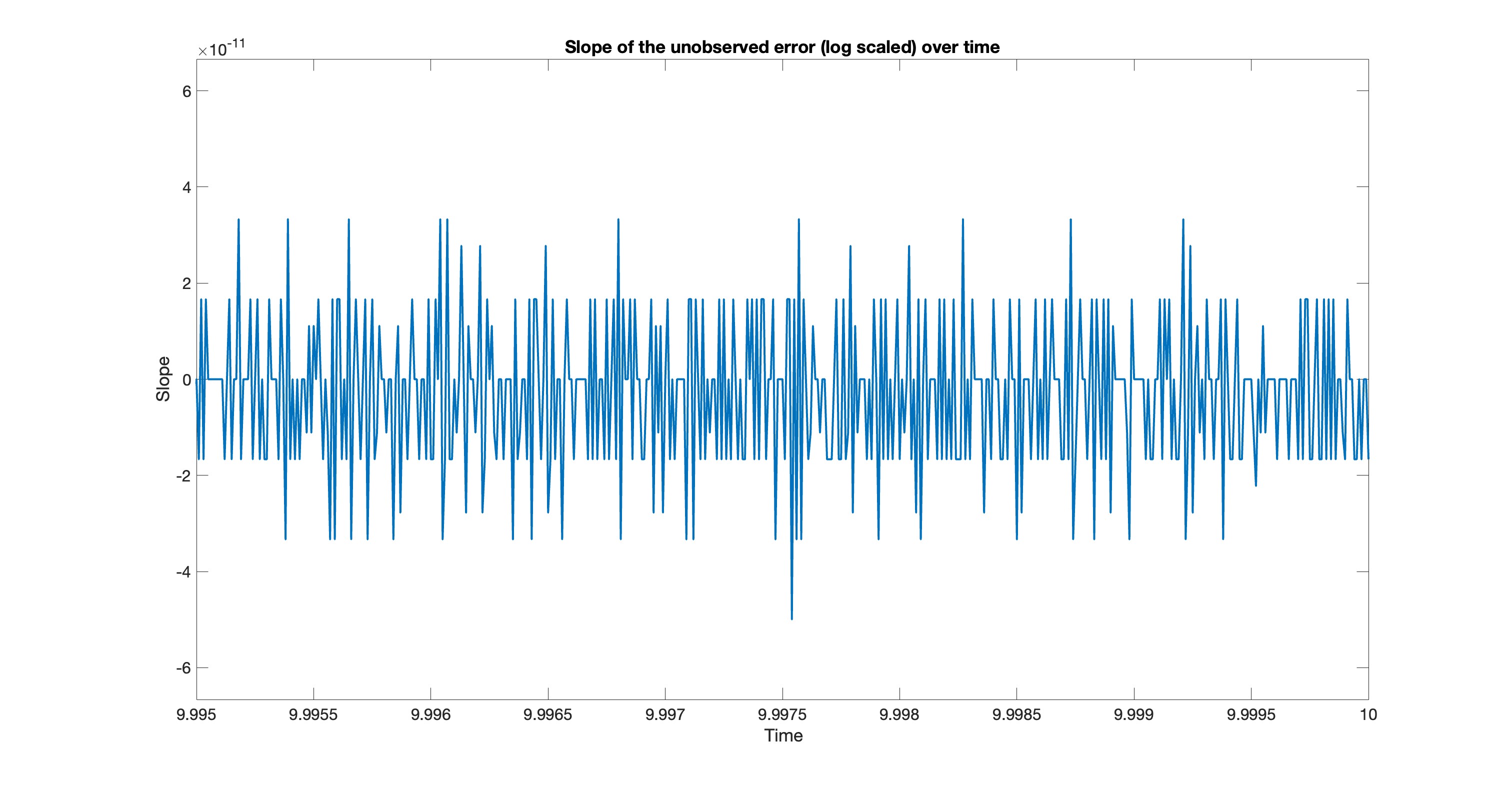}
        \caption{Approximate exponential decay rate of the high mode error for the last $500$ timesteps.}
        \label{fig:zero control:d}
        
    \end{subfigure}
    
    \caption{Nudging with $5$ observed modes initialized as in \cref{eq:cosk_0} with $k_0 = 6$ fails to control high oscillations towards the zero solution. Here $\mu = 100$ and $\delta = 0.075$.}
    \label{fig:zero control}
\end{figure}

\begin{figure}
    \centering
    \begin{subfigure}[t]{0.48\textwidth}
        \centering
        \includegraphics[width=\linewidth]{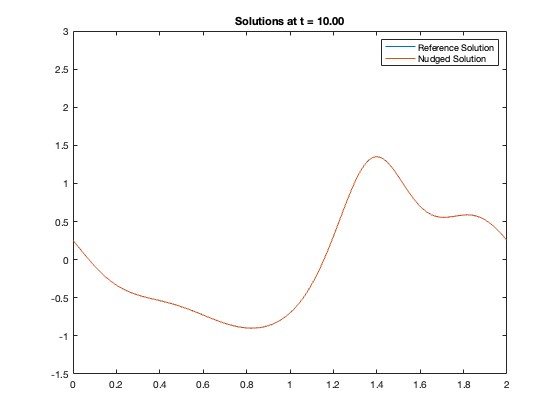}
        \caption{Snapshot of solutions at time $t = 10$.}
            \label{fig:solenoid control:a}

    \end{subfigure}
    \hfill
    \begin{subfigure}[t]{0.48\textwidth}
        \centering
        \includegraphics[width=\linewidth]{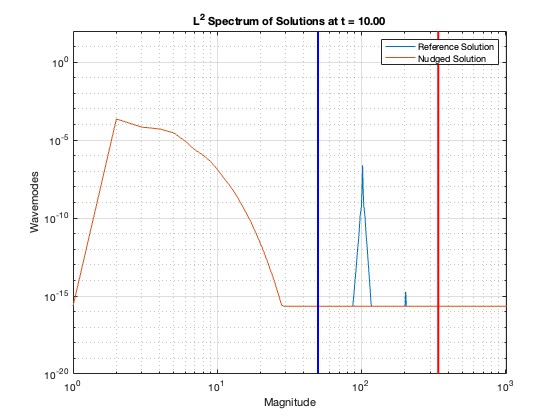}
        \caption{Snapshot of $L^2$ spectrum of solutions at time $t = 10$.}
            \label{fig:solenoid control:b}

    \end{subfigure}

    \begin{subfigure}[t]{0.48\textwidth}
        \centering
        \includegraphics[width=\linewidth]{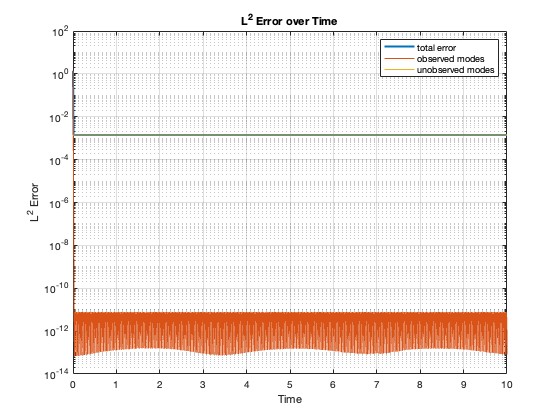}
        \caption{$\norm{u - v}_{L^2}$ over time split into observed low modes, unobserved high modes, and the total error.}
            \label{fig:solenoid control:c}

    \end{subfigure}
    \hfill
    \begin{subfigure}[t]{0.48\textwidth}
        \centering
        \includegraphics[width=\linewidth,height=.24\textheight]{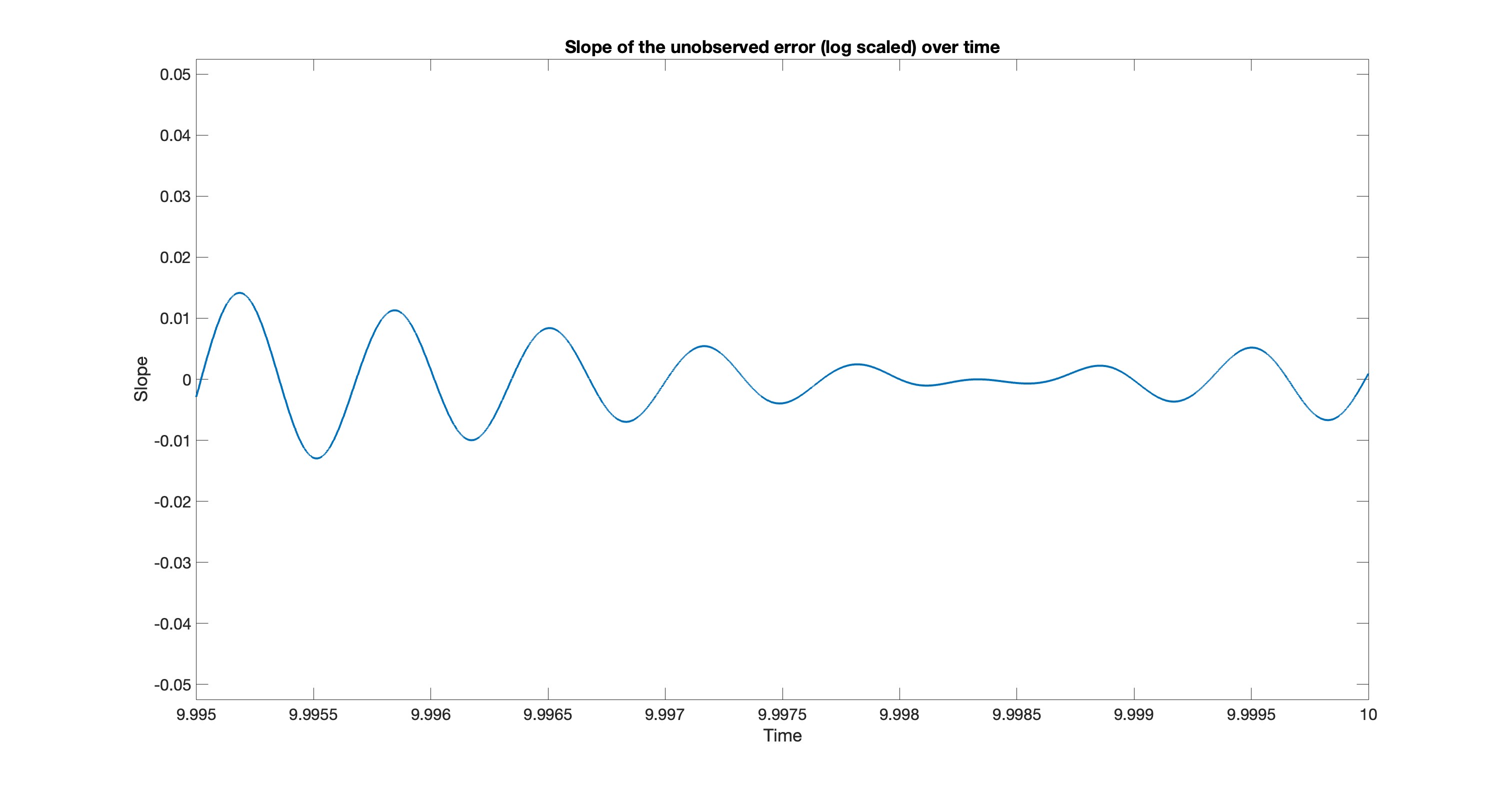}
        \caption{Approximate exponential decay rate of the high mode error for the last $500$ timesteps.}
            \label{fig:solenoid control:d}

    \end{subfigure}
    
    \caption{Nudging with $50$ observed modes and additional activity at wavemode $k_0 = 100$, initialized as in \cref{eq:general failure}. Here $\mu = 100$ and $\delta = 0.075$.}
    \label{fig:solenoid control}
\end{figure}

\begin{figure}
    \centering
    \begin{subfigure}[t]{0.48\textwidth}
        \centering
        \includegraphics[width=\linewidth]{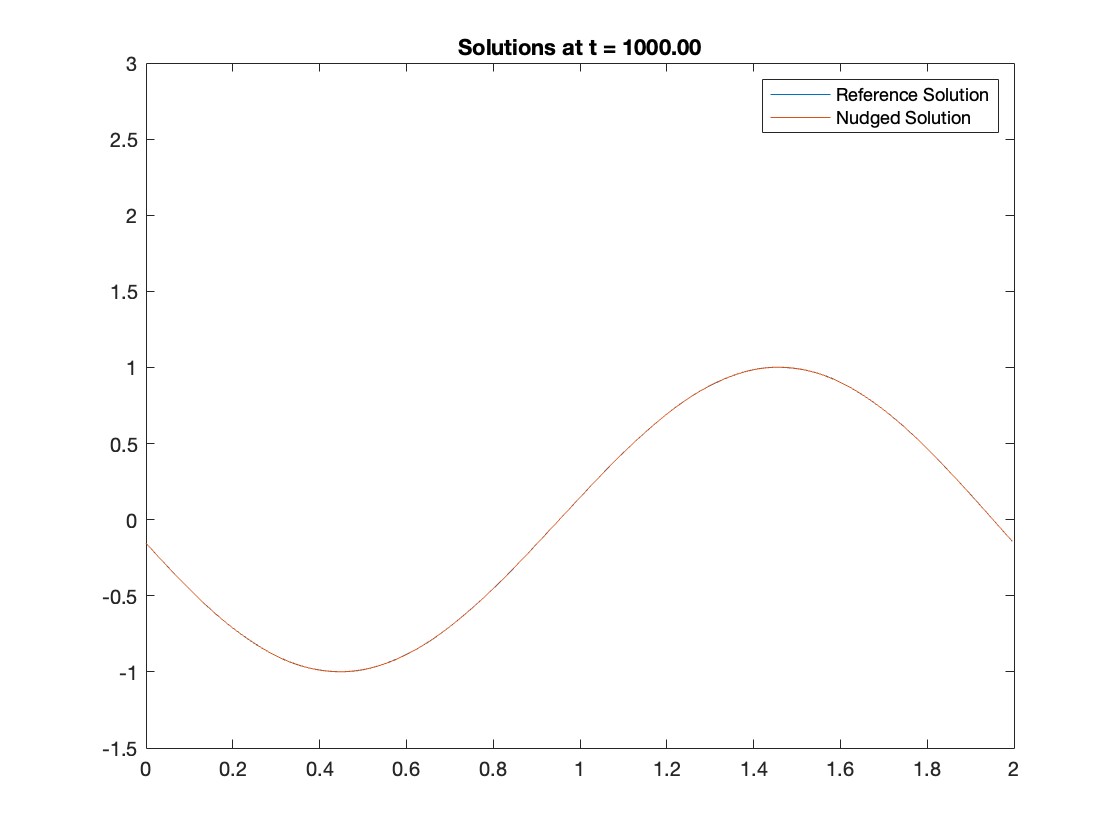}
        \caption{Snapshot of solutions at time $t=1000$.}
            \label{fig:large time failure:a}

    \end{subfigure}
    \hfill
    \begin{subfigure}[t]{0.48\textwidth}
        \centering
        \includegraphics[width=\linewidth]{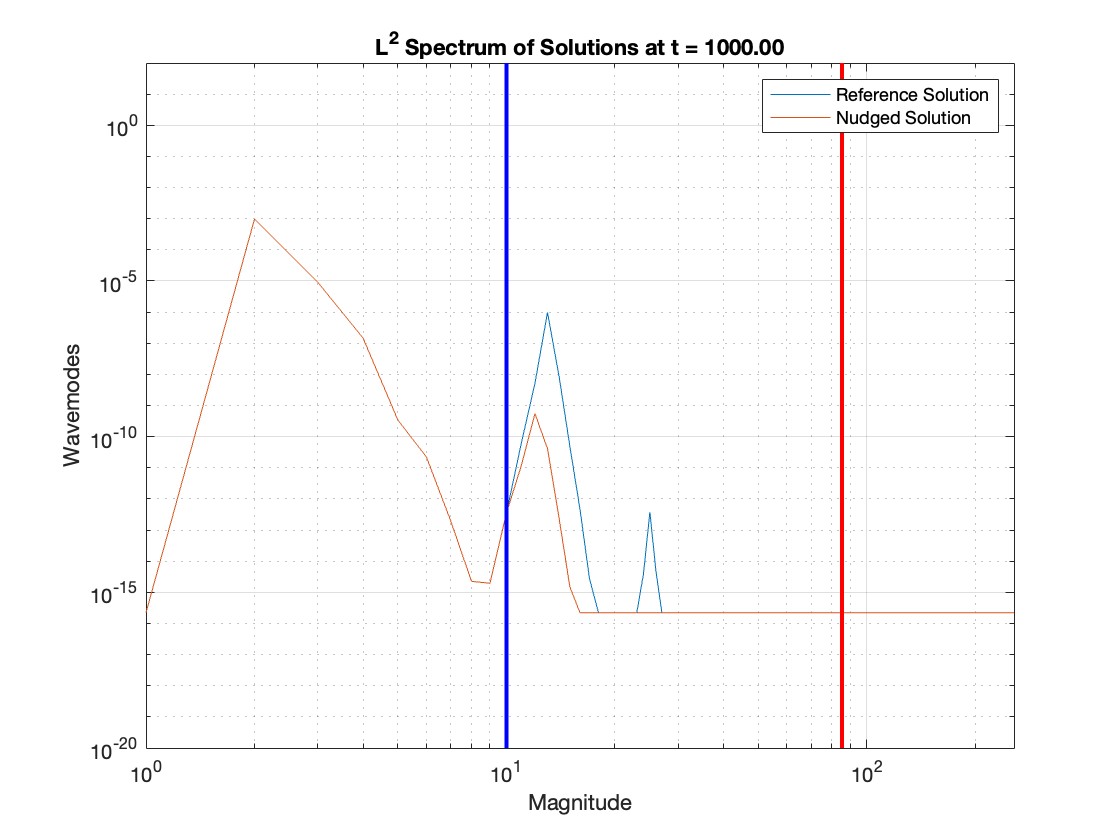}
        \caption{Snapshot of $L^2$ spectrum of solutions at time $t = 1000$.}
            \label{fig:large time failure:b}

    \end{subfigure}

    \begin{subfigure}[t]{0.48\textwidth}
        \centering
        \includegraphics[width=\linewidth,height=.24\textheight]{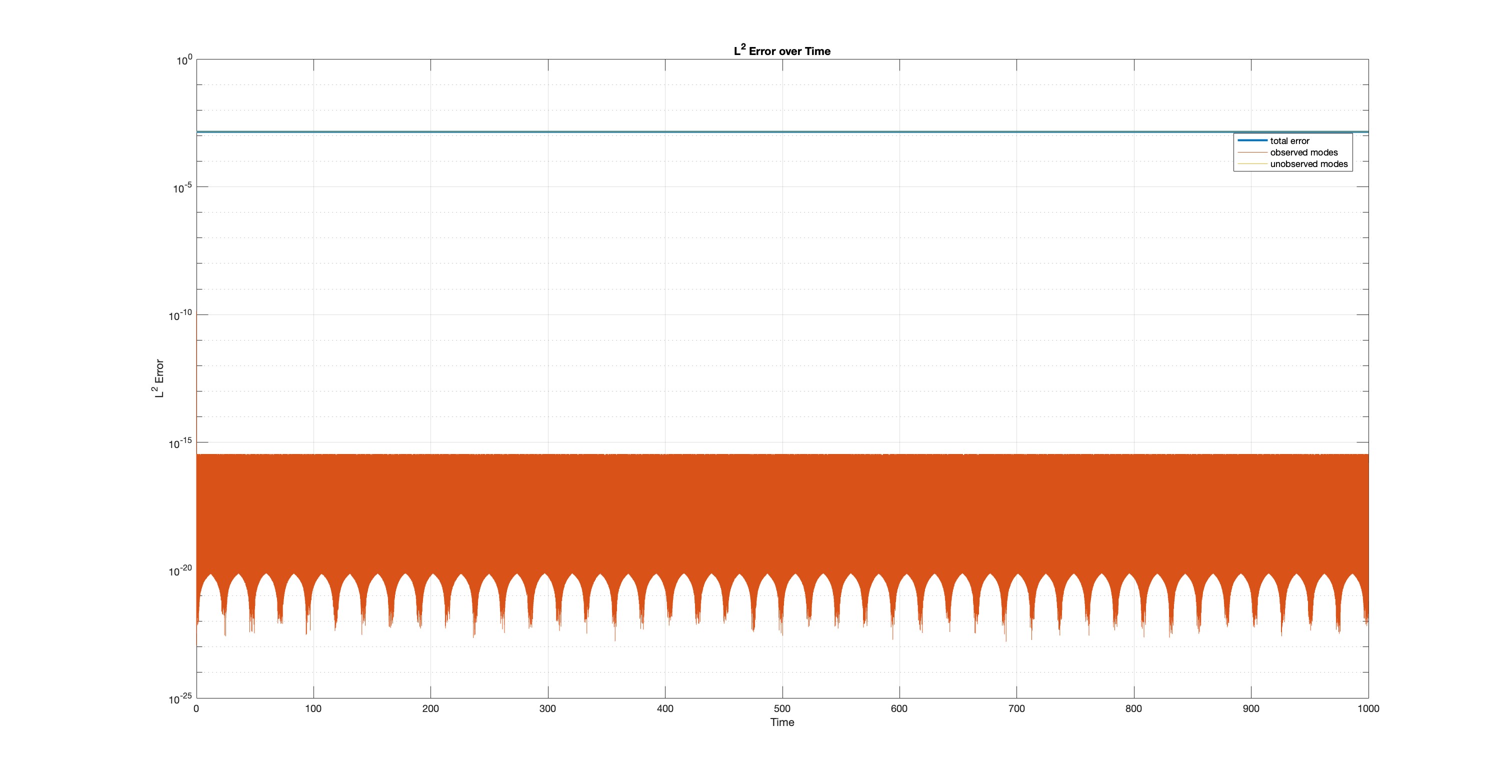}
        \caption{$\norm{u - v}_{L^2}$ over time split into observed low modes, unobserved high modes, and the total error.}
            \label{fig:large time failure:c}

    \end{subfigure}
    \hfill
    \begin{subfigure}[t]{0.48\textwidth}
        \centering
        \includegraphics[width=\linewidth]{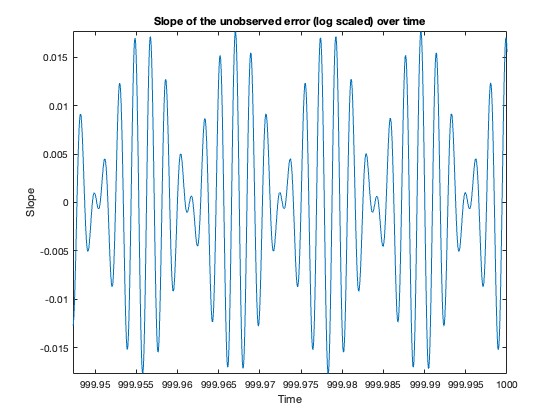}
        \caption{Approximate exponential decay rate of the high mode error for the last $500$ timesteps.}
            \label{fig:large time failure:d}

    \end{subfigure}
    
    \caption{Nudging with $10$ observed modes with $k_0 = 12$ fails to develop oscillations in the unobserved modes. Here $\mu = 100$, $\delta = 1$, and the reference solution is initialized as in \cref{eq:general failure} with the $M$ chosen to be smaller than in \cref{eq:M choice}.
    }
    \label{fig:large time failure}
\end{figure}

\begin{figure}
    \centering
    \begin{subfigure}[t]{0.48\textwidth}
        \centering
        \includegraphics[width=\linewidth]{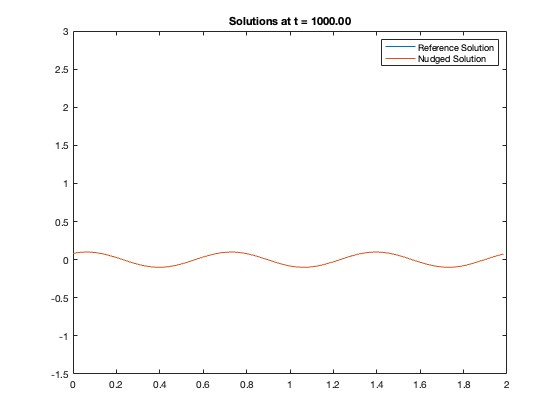}
                \caption{Snapshot of solutions at time $t = 1000$.}
        \label{fig:large time success:a}
        
    \end{subfigure}
    \hfill
    \begin{subfigure}[t]{0.48\textwidth}
        \centering
        \includegraphics[width=\linewidth]{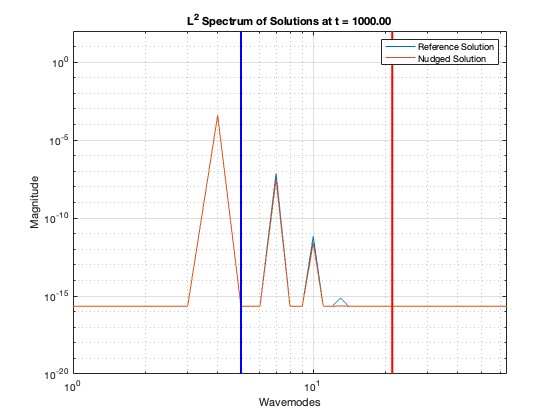}
                \caption{Snapshot of $L^2$ spectrum of solutions at time $t = 1000$.}
        \label{fig:large time success:b}
    \end{subfigure}

    \begin{subfigure}[t]{0.48\textwidth}
        \centering
        \includegraphics[width=\linewidth]{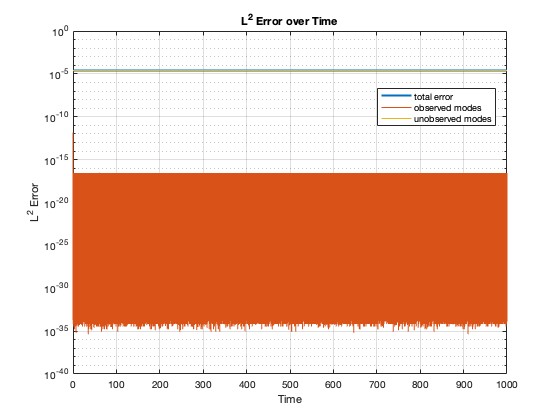}
                \caption{$\norm{u - v}_{L^2}$ over time split into observed low modes, unobserved high modes, and the total error.}
        \label{fig:large time success:c}
        
    \end{subfigure}
    \hfill
    \begin{subfigure}[t]{0.48\textwidth}
        \centering
        \includegraphics[width=\linewidth]{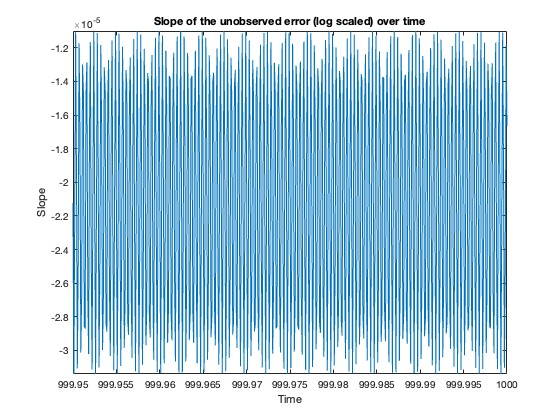}
        \caption{Approximate exponential decay rate of the high mode error for the last $500$ timesteps.}
        \label{fig:large time success:d}
        
    \end{subfigure}
    \caption{Nudging with $5$ observed modes with $k_0 = 3$ appears to work with an extremely slow rate of convergence. Here $\mu = 100$ and $\delta = 1$.}
        \label{fig:large time success}

\end{figure}

\subsubsection{Damped and Driven KdV}\label{sect:KdV damped}
In this subsection we examine the case of the damped and driven KdV equation, that is, we study the following equation:
\begin{equation}\label{eq:KdV damped and driven}
u_t + uu_x + \delta^2 u_{xxx} + \gamma u = f,
\end{equation}
subject to periodic boundary conditions.
Here $\gamma>0$ is the damping coefficient constant and $f$ is a body force that is given to be constant in time with zero spatial average.  The corresponding nudged damped and driven KdV equation is given by
\begin{equation}\label{eq:KdV damped and driven: nudged}
    v_t + vv_x + \delta^2 v_{xxx} + \gamma v = f + \mu P_M(u-v).
\end{equation}
In \cite{Jolly_Sadigov_Titi_2017} it was shown analytically, that given enough observed Fourier modes and large enough $\mu$, that the nudged solution converges to the reference solution in $L^2$ exponentially fast in time with exponent $\frac{\gamma}{4}$ for arbitrary choice of initial data $v^{in} \in H^2$ with spatial mean zero for \cref{eq:KdV damped and driven: nudged}.

In \cref{fig:damped and driven} we can see nudging effectively recovers a reference solutions initialized as in \cref{eq:cosk_0} exponentially fast in time. Here we utilize $\mu = 100$ and $\gamma = 0.1$ and the exponential rate of convergence is given by small perturbations from $\gamma$. Here we initialize the reference solution as in \cref{eq:cosk_0} with $k_0 = 12$, that is, we selected the initial profile $u^{in} = \cos(12\cdot\pi\cdot x)$. For our driving force, we used $f = \cos(8\cdot\pi\cdot x)$.  
In the simulations shown in this section we use $M = 10$, indicating that only the first 10 Fourier modes of the solution are observed. 
It is not surprising that convergence happens at the exponential rate proportional to the damping coefficient, $\gamma$, as this coincides with the results of \cite{Jolly_Sadigov_Titi_2017}.
We note that the exponential convergence is not entirely independent of $\mu$, the oscillatory component at initial times and on the observed modes is particularly sensitive to $\mu$ values, although this effect quickly fades as the error begins to decay at rate $\gamma$. 
Moreover, as one sees from the spectrum plots in \cref{fig:damped and driven} the primary barrier for convergence are the spikes in undetected frequencies, therefore it is not surprising that convergence to the reference solution happens at exactly the rate, $\gamma$, at which momentum is drained from these modes. 
Indeed, this behavior is typical of simulations of nudging methods, as the unobserved modes rely on the dissipation in the system in order to synchronize. The overall decay in error can typically be decomposed into two distinct phases, an initial rapid decay in error that is highly dependent on $\mu$, and a slower, but still exponentially fast in time, rate of convergence primarily driven by the dissipation in the system. For an in-depth study of the role of the particular value of $\mu$ see, e.g., \cite{Carlson_Farhat_Martinez_Victor_2024_ISYNC}.

The AOT algorithm successfully recovers the reference solution, $u$, to \cref{eq:KdV damped and driven} even when $u$ is initialized as in the counterexamples from the previous section \cref{eq:cosk_0,eq:general failure} given enough observed modes. 
Note that in \cref{fig:damped and driven} we see some behavior in the observed modes due to the presence of the body force. 
The body force can be disabled by setting it to be identically zero, however in that case data assimilation would not be required at all. 
This is because the damping term will drive \textit{any} initial condition to $0$, exponentially fast at the rate $\gamma$ with or without any form of data assimilation being applied. 
Additionally, the AOT algorithm is only guaranteed to recover the true solution of \cref{eq:KdV damped and driven} asymptotically in time, which in the case of zero body forcing is the zero solution. 
While the observed scales may be approximately recovered via the AOT algorithm, solutions initialized as in \cref{eq:cosk_0,eq:general failure} will exhibit large oscillations in the unobserved Fourier modes that only approximately vanish after advancing suitably far in time.
 
\begin{figure}
    \centering

    \begin{subfigure}[t]{0.3\textheight}
        \centering
        \includegraphics[width=\linewidth]{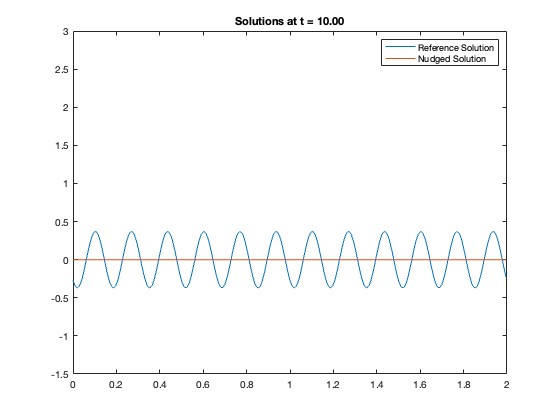}
        \caption{Snapshot of solutions at time $t = 10$.}
        \label{fig:damped and driven:a}
    \end{subfigure}
    \begin{subfigure}[t]{0.3\textheight}
        \centering
        \includegraphics[width=\linewidth]{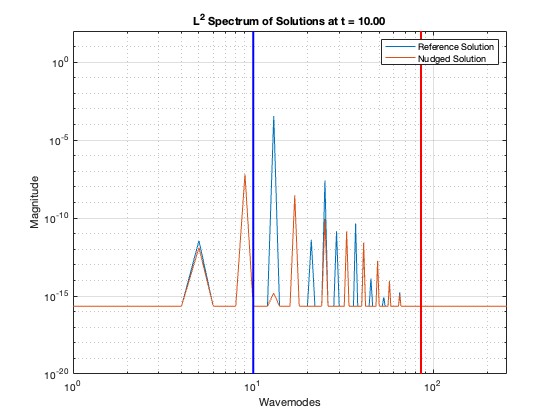}
        \caption{Snapshot of $L^2$ spectrum of solutions at time $t = 10$.}
        \label{fig:damped and driven:b}
    \end{subfigure}

    \begin{subfigure}[t]{0.3\textheight}
        \centering
        \includegraphics[width=\linewidth]{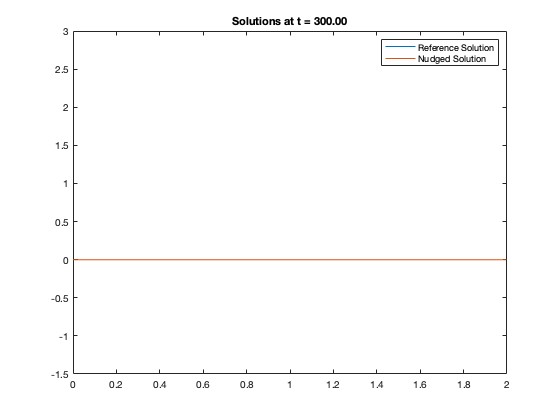}
        \caption{Snapshot of solutions at time $t = 300$.}
        \label{fig:damped and driven:c}
    \end{subfigure}
       \begin{subfigure}[t]{0.3\textheight}
        \centering
        \includegraphics[width=\linewidth]{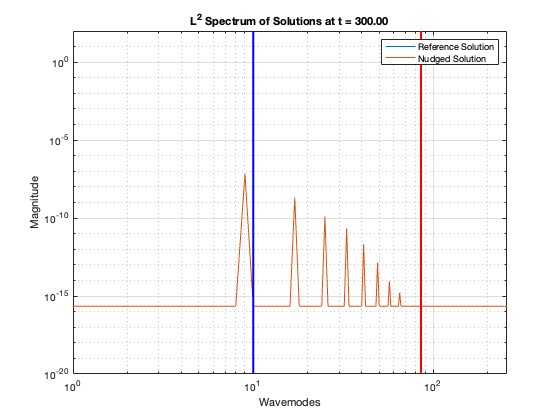}
        \caption{Snapshot of $L^2$ spectrum of solutions at time $t = 300$.}
        \label{fig:damped and driven:d}
    \end{subfigure}

    \begin{subfigure}[t]{0.3\textheight}
        \centering
        \includegraphics[width=\linewidth]{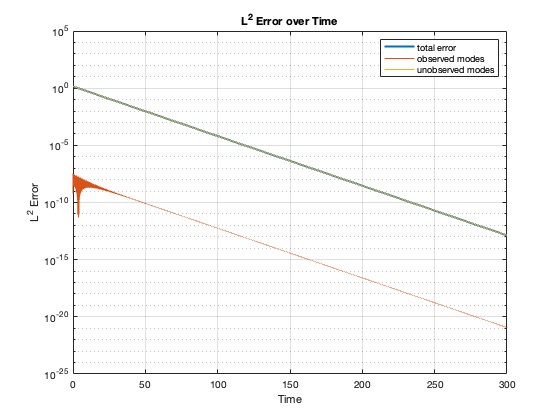}
        \caption{$\norm{u - v}_{L^2}$ over time split into observed low modes, unobserved high modes, and the total error.}
        \label{fig:damped and driven:e}
        
    \end{subfigure}
    \begin{subfigure}[t]{0.3\textheight}
        \centering
        \includegraphics[width=\linewidth]{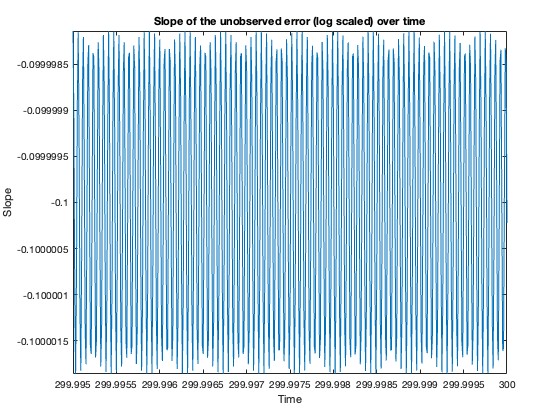}
        \caption{Approximate exponential decay rate of the high mode error for the last $500$ timesteps.}
        \label{fig:damped and driven:f}
    \end{subfigure}
     \caption{ Nudging with $10$ observed modes successfully recovers reference solution with nudging with $\mu = 100$ for $\gamma = 0.1$, $\delta = 1$, and $f = \cos(8\pi x)$. The nudged solution is initialized to be identically zero and the reference solution is initialized as in \cref{eq:cosk_0} with $k_0 = 12$. Note that the solution is not converging to zero, but features small amplitude oscillations. } 
    \label{fig:damped and driven}
\end{figure}

\section{2D PDEs - Euler} \label{sect:Euler}
\subsection{Analytical Results} \label{sect:Euler:preliminaries}
We now extend our results to the incompressible Euler equations on a 2D periodic domain. The equations are given as follows:
\begin{equation}
    \begin{split}
    \bu_t + \bu \cdot \nabla \bu &= -\nabla p, \\
    \div \bu &= 0. 
    \end{split}\label{eq: Euler}
\end{equation}
Here $\bu(x,t) = (u_1(\bx,t),u_2(\bx,t))$ is the velocity field and $p$ is the pressure.
The corresponding nudged equations are given by:
\begin{equation}\label{eq: Euler-nudged}\begin{split}
    \bv_t + \bv \cdot \nabla \bv &= -\nabla q + \mu P_M(\bu - \bv),
    \\
    \div \bv &= 0. \end{split}
\end{equation}
Here $\bv(x,t) = (v_1(\bx,t),v_2(\bx,t))$ is the velocity field, $q$ is the pressure, $\mu$ is the nudging parameter, and $M$ is the number of observed modes in the reference solution $u$ of \cref{eq: Euler}. 
As we are observing a 2D solution, $u$ of the system \cref{eq: Euler}, $P_M$ denotes the observations of all Fourier modes $\vect{k} \in \mathbb{Z}^2 \setminus \{(0,0)\}$ such that $\abs{\vect{k}}\leq M$.

We now describe the functional setting of the Euler equations, which will be useful in order to extend our results from KdV to this setting. For a more in-depth description of the functional formulation of the Euler equations see e.g. \cite{Temam_1997_IDDS, Constantin_Foias_1988, Foias_Manley_Rosa_Temam_2001}.
To begin, we denote the following space of test functions 
\begin{equation}
    \mathcal{V} := \left\{ \vect\varphi \in \mathcal{F}: \grad \cdot \vect\varphi = 0 \text{ and } \int_\Omega \vect\varphi(\bx) \,d\bx = 0 \right\}, 
\end{equation}
where $\mathcal{F}$ is the set of all two-dimensional vector-valued trigonometric polynomials defined on the domain 
$\Omega = \mathbb{T}^2 = [0,2\pi)^2$. 
We define $H = \overline{\mathcal{V}}^{L^2}$ and 
$V = \overline{\mathcal{V}}^{V}$, where $H$ and $V$ are standard Lebesgue and Sobolev spaces defined on $\Omega$. We define the inner products on $H$ and $V$ respectively by
\begin{equation}
 (\bu,\bv) = \sum_{m=1}^2 \int_\Omega u_mv_md{\bx} \quad\text{ and }\quad ((\bu,\bv)) = \sum_{m,n = 1}^2 \int_\Omega \pdv{u_m}{x_n}\pdv{v_m}{x_n}d{\bx},
 \end{equation}
which are associated with the norms $\norm{\bu}_{H} = \norm{\bu}_{L^2} = (\bu,\bu)^{1/2}$ and $\norm{\bv}_{V} = ((\bu,\bu))^{1/2}$. We note that the $V$ norm is the $H^1$ seminorm, which we use as the full norm for $V$ which is justified by the Poincar\'e inequality. We denote by $V^s$ the equivalent Sobolev space, given by $V^s = \overline{\mathcal{V}}^{V^s}$, where the $V^s$ norm is given by the $H^s$ seminorm.

We now apply the Leray-Helmholtz projector, $P_\sigma$, to \cref{eq: Euler,eq: Euler-nudged} to formally obtain the following evolution equations for $\bu$ and $\bv$, respectively
\begin{equation}\begin{split}
    \bu_t + B(\bu,\bu) &=0,\label{eq: Euler functional}\\ 
    \end{split}
\end{equation}
\begin{equation}\begin{split}
    \bv_t + B(\bv,\bv)  &=\mu P_\sigma P_M(\bu - \bv).\label{eq: Euler-nudged functional} 
    \end{split}
\end{equation}
Here we formally denote the nonlinear operator, $B(\vect{a},\vect{b}):= P_\sigma (\vect{a} \cdot \grad \vect{b})$ (cf. \cite{Temam_1997_IDDS, Constantin_Foias_1988}).

We recall the following theorem regarding global well-posedness of solutions to \cref{eq: Euler}
\begin{theorem}[Global well-posedness of \cref{eq: Euler} (see e.g. \cite{Majda_Bertozzi_2002})]
    Given $\bu^{in}\in V^{s}(\mathbb{T}^2)$ with $s\geq3$, then \cref{eq: Euler} is globally well-posed with solution 
    \[\bu \in C([0,T];V^s)\cap L^\infty([0,T]; H).\]
    Moreover $\norm{\bu(t)}_H = \norm{\bu^{in}}_H$ for all $t\geq 0$.
\end{theorem}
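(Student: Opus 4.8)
The plan is to exploit the special structure of the two-dimensional vorticity equation, which supplies an a priori bound strong enough to preclude finite-time blow-up. Writing $\omega = \partial_1 u_2 - \partial_2 u_1$ for the scalar vorticity and taking the curl of \cref{eq: Euler} eliminates the pressure and, crucially, produces no vortex-stretching term in two dimensions, leaving the pure transport equation $\omega_t + \bu \cdot \nabla \omega = 0$. Because $\bu$ is divergence-free, this transport structure immediately yields that every $L^p$ norm of the vorticity is conserved, $\norm{\omega(t)}_{L^p} = \norm{\omega^{in}}_{L^p}$ for all $t \geq 0$ and all $1 \leq p \leq \infty$. This is the one ingredient unavailable in three dimensions and is the heart of the global result.

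The existence and regularity argument I would carry out in four steps. First, construct approximate solutions (either by Galerkin truncation of \cref{eq: Euler functional}, or by the vanishing-viscosity regularization $\bu_t + B(\bu,\bu) = \nu \Delta \bu$) and derive the high-order energy estimate; using the commutator (Kato--Ponce) estimates for $s \geq 3$ one obtains $\tfrac{d}{dt}\norm{\bu}_{V^s}^2 \lesssim \norm{\nabla \bu}_{L^\infty}\norm{\bu}_{V^s}^2$, which by itself yields only local-in-time existence on an interval $[0,T^*)$ whose length depends on $\norm{\bu^{in}}_{V^s}$. Second, pass to the limit in the approximation parameter to obtain a genuine local solution in $C([0,T^*);V^s)$ and establish uniqueness by running the same energy estimate on the difference of two solutions. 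Third, invoke the Beale--Kato--Majda continuation criterion: the local solution extends as long as $\int_0^{T}\norm{\omega(t)}_{L^\infty}\,dt < \infty$, where the bridge between this criterion and the $V^s$ bound is the logarithmic Sobolev inequality $\norm{\nabla \bu}_{L^\infty} \lesssim \norm{\omega}_{L^2} + \norm{\omega}_{L^\infty}\bigl(1 + \log(1 + \norm{\bu}_{V^s})\bigr)$. Fourth, feed in the conserved quantity $\norm{\omega(t)}_{L^\infty} = \norm{\omega^{in}}_{L^\infty}$: since this is constant in time, the BKM integral is finite on every $[0,T]$, so the local solution continues to an arbitrary time horizon, giving global well-posedness and $\bu \in C([0,T];V^s)$ for every $T$.

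For the energy identity $\norm{\bu(t)}_H = \norm{\bu^{in}}_H$, I would test \cref{eq: Euler functional} against $\bu$ in $H$. The nonlinear contribution vanishes because $(B(\bu,\bu),\bu) = \int_\Omega (\bu\cdot\nabla\bu)\cdot\bu\,d\bx = \tfrac{1}{2}\int_\Omega \bu\cdot\nabla\abs{\bu}^2\,d\bx = -\tfrac{1}{2}\int_\Omega (\div\bu)\abs{\bu}^2\,d\bx = 0$, using the divergence-free constraint and periodicity to discard the boundary terms. Hence $\tfrac{1}{2}\tfrac{d}{dt}\norm{\bu}_H^2 = 0$, and the membership $\bu \in L^\infty([0,T];H)$ follows at once. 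The main obstacle is the third step: controlling $\norm{\nabla\bu}_{L^\infty}$, since $\nabla\bu$ cannot be bounded by $\norm{\omega}_{L^\infty}$ alone (the Biot--Savart kernel is only weakly singular), which is exactly why the logarithmic BKM estimate, combined with the two-dimensional conservation of $\norm{\omega}_{L^\infty}$, is indispensable. Everything else reduces to routine energy analysis.
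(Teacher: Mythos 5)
Your proposal is correct and coincides with the standard argument: the paper does not prove this theorem at all --- it recalls it as a classical result from \cite{Majda_Bertozzi_2002} --- and the proof in that reference is precisely the route you describe (2D vorticity transport with no stretching term, conservation of $\norm{\omega}_{L^\infty}$, local existence and uniqueness by $H^s$ energy estimates for $s \geq 3$, and the Beale--Kato--Majda continuation criterion bridged by the logarithmic Sobolev inequality). Your derivation of the energy identity $\norm{\bu(t)}_H = \norm{\bu^{in}}_H$ by testing against $\bu$ and using $\div \bu = 0$ with periodicity is likewise the standard justification and is valid at this level of regularity.
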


We now present the analogues of the results from \cref{sect:KdV} for the 2D incompressible Euler equations. 
As the proofs follow similarly to their counterparts in \cref{sect:KdV}, we summarize them into the following key statements without detailed proofs.

\begin{proposition}\label[proposition]{prop:1 - Euler}
Let $k\in \mathbb{Z}\setminus \{0\}$  and let $\bu^{in} \in V^3$  be given with zero spatial mean, satisfying for all $\bx\in \mathbb{R}^2$ and each $j = 1,2$,
\[ \bu^{in}(\bx+\frac{L}{k}\vect{e}_j) = \bu^{in}(\bx).\]
If $\bu(\bx,t)$ is the solution of \cref{eq: Euler} with initial data $\bu^{in}$, then for all $\bx \in \mathbb{R}^2$, $t\in \mathbb{R}$, and each $j = 1,2$, $\bu(\bx,t)$ satisfies
\[
\bu(\bx+\frac{L}{k}\vect{e}_j,t) = \bu(\bx,t).
\]
In particular, the space of functions periodic with respect to the lattice $k\mathbb{Z}^2 \setminus \{\bzero\}$ is invariant under the solution of \cref{eq: Euler}.
\end{proposition}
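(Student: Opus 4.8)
The plan is to transcribe the proof of \Cref{prop:2} into the Euler setting, replacing the appeal to uniqueness for the damped and driven KdV equation by the uniqueness supplied by the global well-posedness theorem for \cref{eq: Euler}. The mechanism is identical: the Euler equations are autonomous and spatially translation invariant, so a rigid shift of a solution is again a solution carrying shifted initial data and shifted pressure, and two solutions sharing an initial datum must coincide.

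Concretely, fix $j\in\{1,2\}$ and set $\bv(\bx,t) := \bu(\bx+\tfrac{L}{k}\vect{e}_j,t)$ with shifted pressure $\tilde p(\bx,t):= p(\bx+\tfrac{L}{k}\vect{e}_j,t)$. First I would verify that $(\bv,\tilde p)$ solves \cref{eq: Euler}: since $\partial_t$ and $\nabla$ commute with the rigid shift $\bx\mapsto \bx+\tfrac{L}{k}\vect{e}_j$, the convective term $\bv\cdot\nabla\bv$ and the pressure gradient $\nabla\tilde p$ are exactly the shifts of $\bu\cdot\nabla\bu$ and $\nabla p$, while $\div\bv$ is the shift of $\div\bu=0$. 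Equivalently, at the level of \cref{eq: Euler functional} one checks that both the Leray projector $P_\sigma$ and the bilinear operator $B$ commute with translations, $P_\sigma$ because it is a Fourier multiplier and hence diagonal in the exponential basis. Next I would note that $\bv$ inherits verbatim the regularity and structure of $\bu$ — it is divergence-free, has zero spatial mean, and lies in $C([0,T];V^3)\cap L^\infty([0,T];H)$ — and that the hypothesized $\tfrac{L}{k}$-periodicity of $\bu^{in}$ gives $\bv(\bx,0)=\bu^{in}(\bx+\tfrac{L}{k}\vect{e}_j)=\bu^{in}(\bx)$. Thus $\bv$ and $\bu$ start from the same datum, and the uniqueness clause of the global well-posedness theorem forces $\bv\equiv\bu$, i.e. $\bu(\bx+\tfrac{L}{k}\vect{e}_j,t)=\bu(\bx,t)$. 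Running this for both $j=1$ and $j=2$ (and backward in time, using the reversibility of \cref{eq: Euler}) yields the asserted invariance with respect to the full sublattice $\tfrac{L}{k}\mathbb{Z}^2$; by the Fourier characterization in \Cref{prop:1} this is exactly the statement that only the modes $\vect{k}\in k\mathbb{Z}^2$ remain active.

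The hard part — indeed the only nonroutine point, since everything else is a direct copy of the KdV argument — is the commutation of $P_\sigma$ and $B$ with the translation, together with the bookkeeping that the pressure is shifted consistently so that the translated pair still satisfies both incompressibility and the momentum balance. Phrased in Fourier variables this becomes transparent: the shift multiplies each mode $\widehat{\bu}_{\vect{k}}$ by the scalar $e^{i\frac{2\pi}{L}\vect{k}\cdot\frac{L}{k}\vect{e}_j}$, which commutes with the mode-wise action of $P_\sigma$ and merely relabels the convolution sum defining $B$, so the translated field solves the same evolution equation. I would therefore record this Fourier computation explicitly and let the remaining steps go through precisely as in \Cref{prop:2}.
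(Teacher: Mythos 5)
Your proof is correct and is exactly the argument the paper intends: the paper states this proposition without detailed proof, noting it follows as in the KdV case (\Cref{prop:2}), i.e., the translated field $\bv(\bx,t)=\bu(\bx+\tfrac{L}{k}\vect{e}_j,t)$ with shifted pressure is again a solution by translation invariance, and uniqueness from global well-posedness forces $\bv\equiv\bu$. Your added Fourier-side verification that $P_\sigma$ and $B$ commute with translations is a harmless (and correct) elaboration of the same route.
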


\begin{proposition}\label[proposition]{prop:2 - Euler}
    Let $k\in \mathbb{Z}\setminus \{0\}$ and let $\bu^{in}_j \in V^3$, for $j = 1,2$, satisfying the conditions of \cref{prop:1 - Euler}. 
    Moreover, assume $\norm{\bu^{in}_1}_{H} \neq \norm{\bu^{in}_2}_{H}$ and let $\bu_j(x,t)$ be the corresponding solutions of \cref{eq: Euler} with initial data $\bu_j^{in}$.
    Then for any $M<\abs{k}$ and all $t\geq 0$,
    \begin{equation}
        P_M(\bu_1(\cdot,t) - \bu_2(\cdot,t)) = 0.
    \end{equation}  
    and 
    \begin{equation}
        \limsup_{t\to\infty} \norm{\bu_1(\cdot,t) - \bu_2(\cdot,t)}_{H} > 0.
    \end{equation}
\end{proposition}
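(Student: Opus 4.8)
The plan is to follow the template of the KdV argument in \Cref{prop:4} verbatim, substituting conservation of the kinetic energy $\norm{\bu}_H$ (guaranteed by the global well-posedness theorem) for the $L^2$-norm conservation used there, and \Cref{prop:1 - Euler} for its KdV counterpart \Cref{prop:2}. For the vanishing-projection claim, I would argue as follows. By hypothesis each $\bu^{in}_j$ is $\frac{L}{k}$-periodic in each coordinate direction, and by \Cref{prop:1 - Euler} this periodicity is preserved by the Euler flow, so $\bu_j(\cdot,t)$ is $\frac{L}{k}$-periodic in each direction for every $t\in\mathbb{R}$ and each $j=1,2$. A direct computation with the Fourier series then shows that such a function has Fourier support contained in $k\mathbb{Z}^2$, i.e.\ $\hat{\bu}_j(\vect{n},t)=0$ unless $\vect{n}\in k\mathbb{Z}^2$. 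Any nonzero $\vect{n}\in k\mathbb{Z}^2$ has at least one component equal to a nonzero multiple of $k$, hence $\abs{\vect{n}}\geq\abs{k}>M$; together with the zero-mean condition, which eliminates the $\vect{n}=\bzero$ mode, this forces $P_M\bu_j(\cdot,t)=0$ for each $j$. Subtracting the two identities gives $P_M(\bu_1-\bu_2)=0$ for all $t\geq 0$.

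Next I would establish the $\limsup$ bound. The well-posedness theorem gives $\norm{\bu_j(t)}_H=\norm{\bu^{in}_j}_H$ for all $t\geq 0$, so the two energies remain frozen at their distinct initial values. Assuming without loss of generality that $\norm{\bu^{in}_1}_H>\norm{\bu^{in}_2}_H$, the reverse triangle inequality yields
\[
\norm{\bu_1(\cdot,t)-\bu_2(\cdot,t)}_H \;\geq\; \norm{\bu_1(\cdot,t)}_H-\norm{\bu_2(\cdot,t)}_H \;=\; \norm{\bu^{in}_1}_H-\norm{\bu^{in}_2}_H \;>\;0
\]
for every $t\geq 0$. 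Since the right-hand side is a fixed positive constant independent of $t$, taking $\limsup_{t\to\infty}$ gives the second assertion immediately.

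The only step requiring genuine care is the Fourier-support computation, where I would verify that $\frac{L}{k}$-periodicity in \emph{each} coordinate forces both components of every active wavevector to be multiples of $k$, and then check that the ball $\{\abs{\vect{k}}\leq M\}$ meets the lattice $k\mathbb{Z}^2$ only at the origin precisely when $M<\abs{k}$. All of the genuine analytic content—global well-posedness, energy conservation, and the invariance of the periodicity class under the nonlinear flow—has already been absorbed into the well-posedness theorem and \Cref{prop:1 - Euler}. Consequently the main obstacle is really the transfer of those facts to the Euler setting (and in particular the proof of \Cref{prop:1 - Euler}, which relies on uniqueness of sufficiently regular solutions), rather than anything in this proposition itself, which is then a short and essentially algebraic consequence.
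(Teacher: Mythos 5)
Your proposal is correct and is essentially the paper's own argument: the paper gives no detailed proof here, stating only that the Euler propositions ``follow similarly to their counterparts'' for KdV, and your three steps---periodicity preserved by \cref{prop:1 - Euler}, the Fourier-support/lattice computation showing $P_M\bu_j=0$, and conservation of $\norm{\cdot}_H$ plus the reverse triangle inequality---are exactly the Euler translation of the proofs of \cref{prop:1,prop:2,prop:4}.
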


Now, it follows immediately from \cref{prop:2 - Euler} that solutions to \cref{eq: Euler} do not have the finitely many determining modes property:
\begin{theorem}\label{thm:determining modes - Euler}
The incompressible Euler equations \cref{eq: Euler} subject to periodic boundary conditions on $\mathbb{T}^2$ do not possess the finitely many determining modes property in $H$.    
\end{theorem}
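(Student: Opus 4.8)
The plan is to argue by contradiction, in direct parallel with the proof of \cref{thm:determining modes} for KdV, with \cref{prop:2 - Euler} doing all of the real work. First I would suppose that the 2D Euler equations \emph{do} enjoy the finitely many determining modes property in $H$: that is, there is a fixed $M \in \mathbb{N}$ such that for any pair of solutions $\bu_1, \bu_2$ of \cref{eq: Euler}, the hypothesis $\lim_{t\to\infty}\norm{P_M(\bu_1(t)-\bu_2(t))}_{H} = 0$ forces $\lim_{t\to\infty}\norm{\bu_1(t)-\bu_2(t)}_{H} = 0$.

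With such an $M$ in hand, I would set $k = M+1$ and exhibit two solutions whose initial data meet the hypotheses of \cref{prop:2 - Euler}: membership in $V^3$, zero spatial mean, periodicity with respect to the lattice $k\mathbb{Z}^2$ (equivalently $\tfrac{L}{k}$-periodicity in each coordinate direction, with $L = 2\pi$ on $\mathbb{T}^2$), and distinct $H$-norms. The cleanest choice is to take $\bu_1 \equiv \bzero$ and let $\bu_2$ be the solution generated by a nonzero divergence-free field of the required periodicity, e.g. $\bu_2^{in} = \nabla^\perp \psi$ with $\psi(\bx) = \cos(k x_1) + \cos(k x_2)$, so that $\bu_2^{in} = (k\sin(k x_2),\, -k\sin(k x_1))$. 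Writing the velocity as the perpendicular gradient of a stream function automatically enforces $\div \bu_2^{in} = 0$ and zero spatial mean, while the trigonometric form secures $C^\infty$ (hence $V^3$) regularity and $\tfrac{L}{k}$-periodicity; clearly $\norm{\bu_1^{in}}_{H} = 0 \neq \norm{\bu_2^{in}}_{H}$, and the global well-posedness theorem guarantees both evolve into genuine solutions on $\mathbb{T}^2$.

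Then \cref{prop:2 - Euler} applies with $M < \abs{k} = M+1$, yielding simultaneously $P_M(\bu_1(\cdot,t) - \bu_2(\cdot,t)) = 0$ for all $t \geq 0$ (so the observed-mode difference vanishes, and its limit as $t\to\infty$ is trivially zero) and $\limsup_{t\to\infty}\norm{\bu_1(\cdot,t) - \bu_2(\cdot,t)}_{H} > 0$. The first conclusion activates the assumed determining modes hypothesis, while the second directly contradicts its conclusion, which closes the argument.

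I do not anticipate a genuine obstacle here, precisely because the conceptual content — invariance of the lattice-periodic subspace under the Euler flow, conservation of the $H$-norm, and the reverse-triangle-inequality separation of two solutions of differing norm — has already been packaged into \cref{prop:1 - Euler,prop:2 - Euler}. The only point warranting a moment of care is verifying that the explicit $\bu_2^{in}$ above really does satisfy every hypothesis of \cref{prop:2 - Euler}: that its nonzero Fourier modes sit at wavevectors $(\pm k,0)$ and $(0,\pm k)$ with $\abs{\vect{k}} = k = M+1 > M$, and that it is divergence-free with zero mean and the correct period. All of these are immediate for the cosine stream function, so the theorem follows as a short corollary of the preceding proposition.
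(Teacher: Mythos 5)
Your proposal is correct and takes essentially the same route as the paper: there, \cref{thm:determining modes - Euler} is stated to follow immediately from \cref{prop:2 - Euler} via the same contradiction argument used for KdV in \cref{thm:determining modes}, i.e., taking $k = M+1$, one solution identically zero, and one lattice-periodic solution with a different $H$-norm. Your explicit choice $\bu_2^{in} = \nabla^{\perp}\bigl(\cos(kx_1)+\cos(kx_2)\bigr)$ merely makes concrete the divergence-free, mean-zero, $\tfrac{L}{k}$-periodic example that the paper leaves implicit in the Euler case.
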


\begin{corollary}\label[corollary]{cor: nudging fails Euler}
    For a fixed number of observed Fourier modes, $M$, there exists solutions $\bu(\bx,t)$ of \cref{eq: Euler} that cannot be recovered by \cref{eq: Euler-nudged} with arbitrary initial condition $\bv^{in}$.
\end{corollary}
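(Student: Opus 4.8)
The plan is to transcribe the KdV construction of \cref{sect:KdV} into the Euler setting, combining the invariance of the fine-lattice-periodic subspace from \cref{prop:1 - Euler} with the non-uniqueness of observations furnished by \cref{prop:2 - Euler}. First I would fix $M$ and set $k = M+1$. I would then take the reference solution $\bu$ of \cref{eq: Euler} to be generated from a \emph{nonzero} datum $\bu^{in}\in V^3$ that is divergence-free, has zero spatial mean, and is periodic with respect to the fine lattice $\tfrac{L}{k}\mathbb{Z}^2$ (i.e.\ $\bu^{in}(\bx+\tfrac{L}{k}\vect{e}_j)=\bu^{in}(\bx)$ for $j=1,2$). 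A convenient explicit witness is $\bu^{in} = \nabla^{\perp}\psi$ for a smooth, non-constant, $\tfrac{L}{k}$-periodic stream function $\psi$, which is automatically solenoidal and zero-mean. By \cref{prop:1 - Euler} the solution stays $\tfrac{L}{k}$-periodic for all $t$, so its Fourier support lies in $k\mathbb{Z}^2\setminus\{\bzero\}$; every such wavevector has modulus at least $k=M+1>M$, and hence $P_M\bu(\cdot,t)=\bzero$ for all $t$.

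Next I would initialize the nudged system \cref{eq: Euler-nudged} with $\bv^{in}=\bzero$ and check that $\bv\equiv\bzero$ is the corresponding solution. The only thing to verify is that the feedback term vanishes identically: $\mu P_\sigma P_M(\bu-\bv) = \mu P_\sigma P_M\bu = \bzero$ by the previous step, so the nudged equation collapses to $\bv_t + B(\bv,\bv)=\bzero$, which $\bv\equiv\bzero$ solves with the prescribed data. Uniqueness of solutions to the nudged system then forces $\bv(\cdot,t)=\bzero$ for all $t$. Since the $H$-norm is conserved for \cref{eq: Euler}, we have $\norm{\bu(\cdot,t)}_H=\norm{\bu^{in}}_H>0$ for all $t$, so $\limsup_{t\to\infty}\norm{\bu(\cdot,t)-\bv(\cdot,t)}_H=\norm{\bu^{in}}_H>0$ and recovery fails.

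To meet the stated phrasing involving an arbitrary $\bv^{in}$, I would stress that the obstruction is intrinsic to the observational data rather than to the nudging scheme. By \cref{prop:2 - Euler}, any two fine-lattice-periodic solutions $\bu_1,\bu_2$ with $\norm{\bu_1^{in}}_H\neq\norm{\bu_2^{in}}_H$ satisfy $P_M(\bu_1-\bu_2)\equiv\bzero$ yet $\limsup_t\norm{\bu_1-\bu_2}_H>0$, so the data $P_M\bu$ fails to determine $\bu$ uniquely. As any data assimilation algorithm, nudging included, can only access $P_M\bu$, no choice of $\bv^{in}$ made without prior knowledge of $\bu$ can guarantee recovery; the zero initialization above is one explicit instance where it provably does not.

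I expect the only genuine obstacle to be the justification of uniqueness for the nudged equation \cref{eq: Euler-nudged} at the regularity $V^3$, which is exactly what licenses the step from "the feedback term vanishes" to "$\bv\equiv\bzero$." This should follow from the same energy estimates as for the unforced Euler equation, since the reference $\bu$ is smooth and the feedback $\mu P_\sigma P_M(\bu-\bv)$ is a bounded, finite-rank smoothing perturbation that is linear (plus source) in $\bv$; everything else is a direct transcription of the KdV argument through \cref{prop:1 - Euler,prop:2 - Euler}.
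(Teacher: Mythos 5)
Your proposal is correct and takes essentially the same approach the paper intends: the paper leaves this corollary's proof implicit (stating that the Euler results follow just as their KdV counterparts), and your argument --- a fine-lattice-periodic reference datum so that \cref{prop:1 - Euler} gives $P_M\bu(\cdot,t)\equiv\bzero$, a zero-initialized nudged solution that therefore remains identically zero, and conservation of the $H$-norm forcing a persistent error --- is exactly the transcription of the KdV construction (cf.\ \cref{eq:cosk_0} and the discussion in \cref{sect:KdV}) that the paper has in mind, and which its Taylor--Green experiment in \cref{sect:Euler:computational results} illustrates. Your closing treatment of the ``arbitrary $\bv^{in}$'' clause, via the non-identifiability of solutions sharing the same observations from \cref{prop:2 - Euler}, likewise matches the paper's own reading of that phrase in its conclusion.
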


In a future work, we will extend these results to the damped and driven Euler equations, which introduce additional complexities due to the external forcing and damping terms.


\subsection{Computational Results}\label{sect:Euler:computational results}

In this section we will detail the numerical methods we utilize to simulate \cref{eq: Euler,eq: Euler-nudged}.  For this computational study we implemented the incompressible Euler equations on the 2D periodic torus $\mathbb{T}^2 = [0,2\pi)\times[0,2\pi)$, subject to periodic B.C. with period $2\pi$ using pseudo-spectral methods. We implement the equations by using  the explicit Euler time-stepping scheme with the nonlinear computed explicitly. 
In the simulations in this chapter we utilize pseudo-spectral methods (see e.g.,  \cite{Canuto_Hussaini_Quarteroni_Zang_2006,Peyret_2013_spectral_book,Shen_Tang_Wang_2011} for textbook descriptions of these methods), that is, derivatives were computed using Matlab's implementation of the n-dimensional fast Fourier transform (\texttt{fftn}, which relies on FFTW).  We used the standard 2/3 dealiasing rule to compute the nonlinear term, that is, we zeroed out the modes above wavenumber $N/3$, where $N$ is number of gridpoints in one direction of the periodic box.  Moreover, to reduce the number of FFT operations, the Basdevant formula was used (see \cite{Emami_Bowman_2018} for further discussion). We utilize the spatial resolution $N = 2^8$ and time step $\Delta t = 0.001$.

We remark that while our analytical results illustrate the failure of nudging to recover the true solution, effective testing of the AOT algorithm in recovering solutions to \cref{eq: Euler} remains problematic. This is due to the fact that the AOT algorithm is typically shown to recover dissipative dynamical systems only asymptotically in time. Thus if one wishes to faithfully demonstrate the effectiveness of the AOT algorithm it requires running simulations of \cref{eq: Euler} for long amounts of time. This in itself is problematic, as it was shown in \cite{Elgindi_Hu_Sverak_2017} that solutions to \cref{eq: Euler} supported on a finite number of Fourier modes for all times $t\in [0,\infty)$ must necessarily be constant in time. 
This result directly shows that Fourier-based pseudo-spectral schemes cannot adequately approximate non-steady state solutions to \cref{eq: Euler} over arbitrarily long time intervals as they necessarily need to resolve the developing length scales finer than the fixed spatial resolution. 
This issue could potentially be alleviated by using methods with adaptive mesh refinement, however at some point the computer running the simulation will run out of the resources necessarily to adequately capture the developing fine length scales.
However, steady-state solutions are in fact enough to illustrate the failing of the AOT algorithm on the incompressible Euler equations.

For our reference solution we utilize the Taylor-Green vortex given with frequency $k = 15$ and $c = 10^{-4}$, given as follows (see also \cref{fig:Euler:Reference})
\begin{equation}\label{eq: Taylor-Green}
    \begin{cases}
        u_1(\vect{x}, t) = c\sin(k\vect{x}_1)\cos(k\vect{x}_2),\\
        u_2(\vect{x}, t) = - c\cos(k\vect{x}_1)\sin(k\vect{x}_2).
    \end{cases}
\end{equation}
We initialize the nudged equation with initial data identically zero $\vect{v}^{in} \equiv \vect{0}$, with $\mu = 100$. 
The results of our simulations can be seen below in \cref{fig:Euler:21,fig:Euler:22}.

We see in \cref{fig:Euler error 21} that observing all Fourier modes $\vect{k}$ with $\abs{\vect{k}} \leq 21$ is insufficient to recover the true solution. In the spectral plots \cref{fig:Euler spectrum 21,fig:Euler spectrum 22} the vertical lines indicate the observational frequency and dealiasing cutoff frequencies, given in blue and red, respectively. While $M = 21$ is insufficient to recover the true solution, we find that $M = 22$ is sufficient, which is of course not surprising as this is exactly the regime in which the entire solution is in fact observed. When $M$ is chosen smaller than the frequency used in the construction of the Taylor-Green vortex the solution $\bv(\bx,t) \equiv \vect{0}$ holds for all time $t\in [0,1]$.

\begin{figure}
    \centering
    \includegraphics[width=0.95\linewidth]{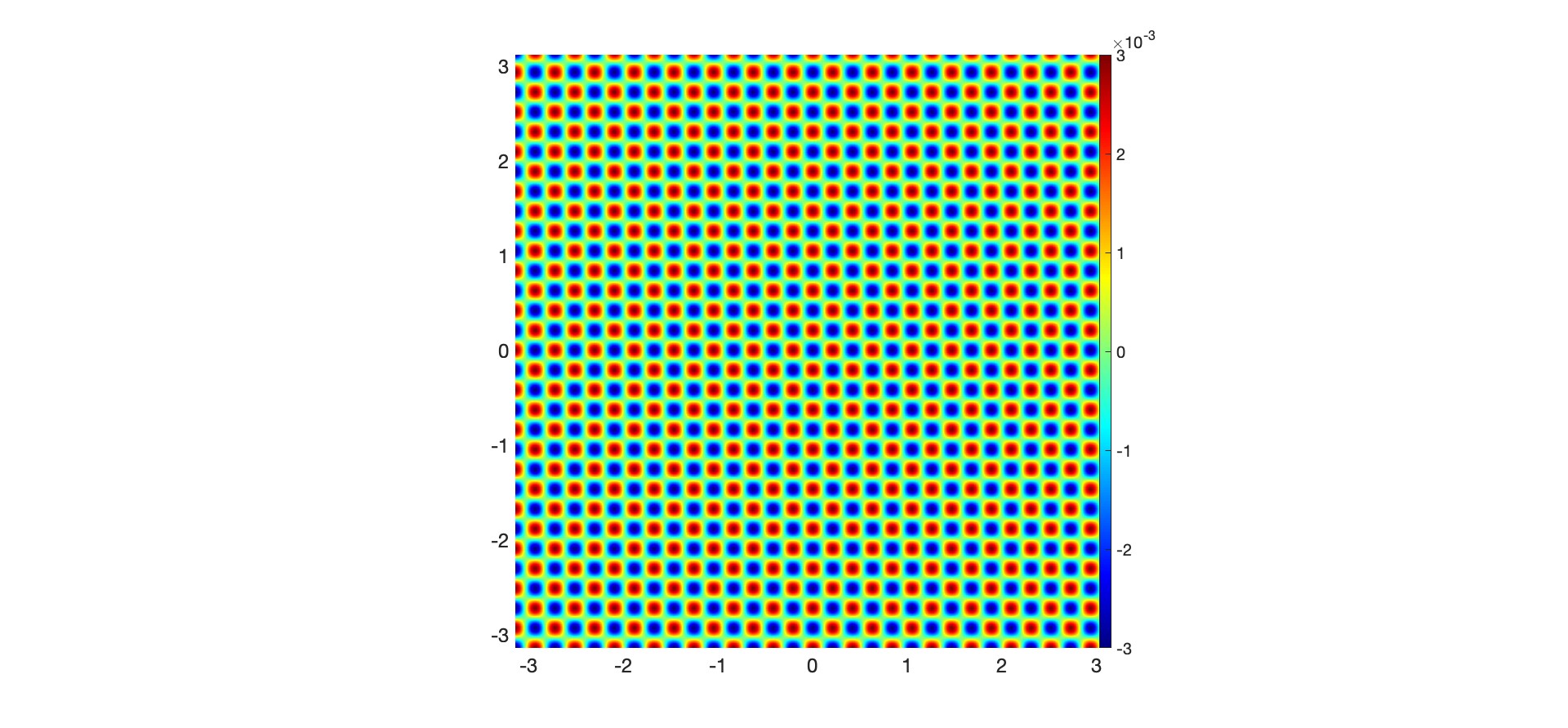}
    \caption{Reference solution given by Taylor-Green steady state with active frequency $k = 15$ and amplitude $c = 10^{-4}$. }
    \label{fig:Euler:Reference}
\end{figure}

\begin{figure}
    \centering

    \begin{subfigure}[t]{0.95\textwidth}
        \centering
        \includegraphics[width=\linewidth]{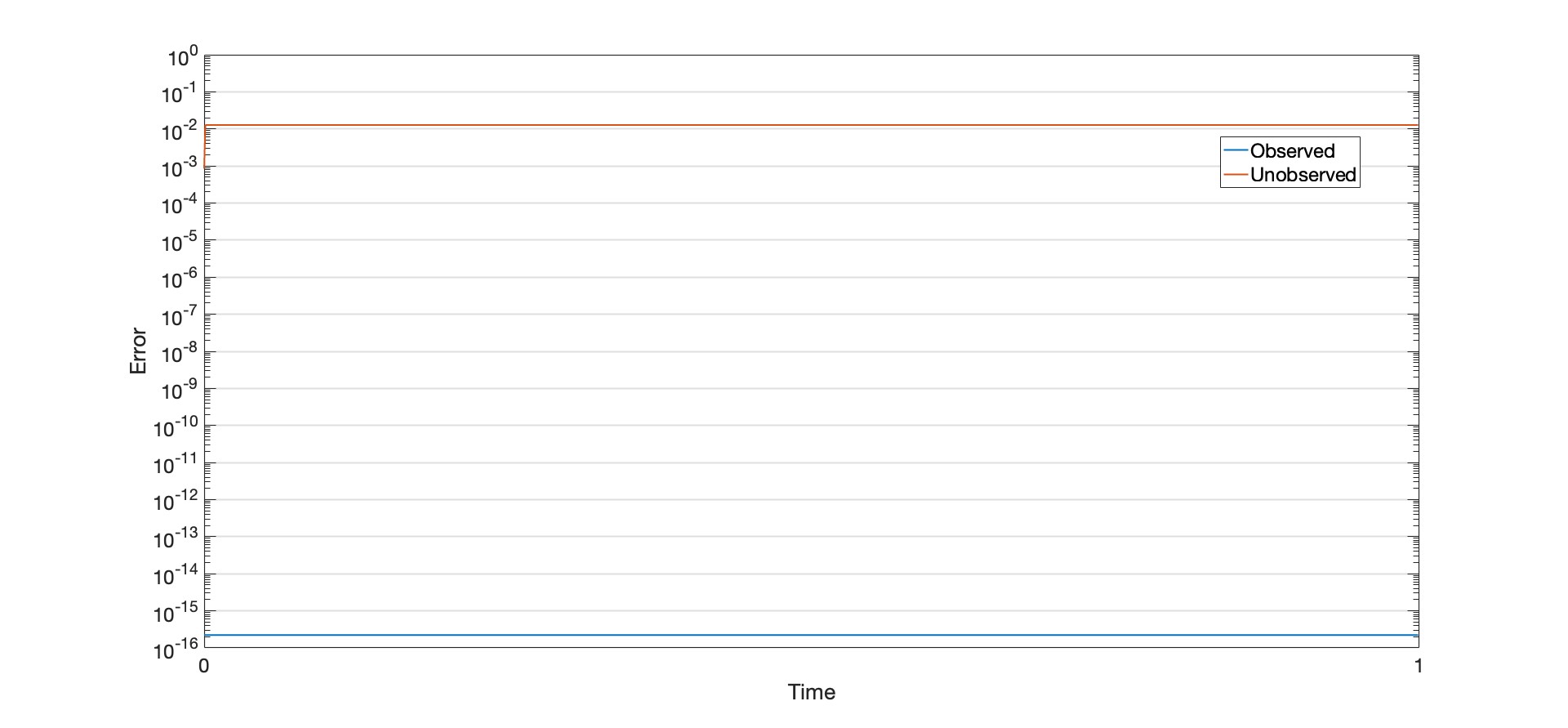}
        \caption{Error over time for $\mu = 100$ with $M = 21$ observed modes.}
        \label{fig:Euler error 21}
    \end{subfigure}
    \begin{subfigure}[t]{0.95\textwidth}
        \centering
        \includegraphics[width=\linewidth]{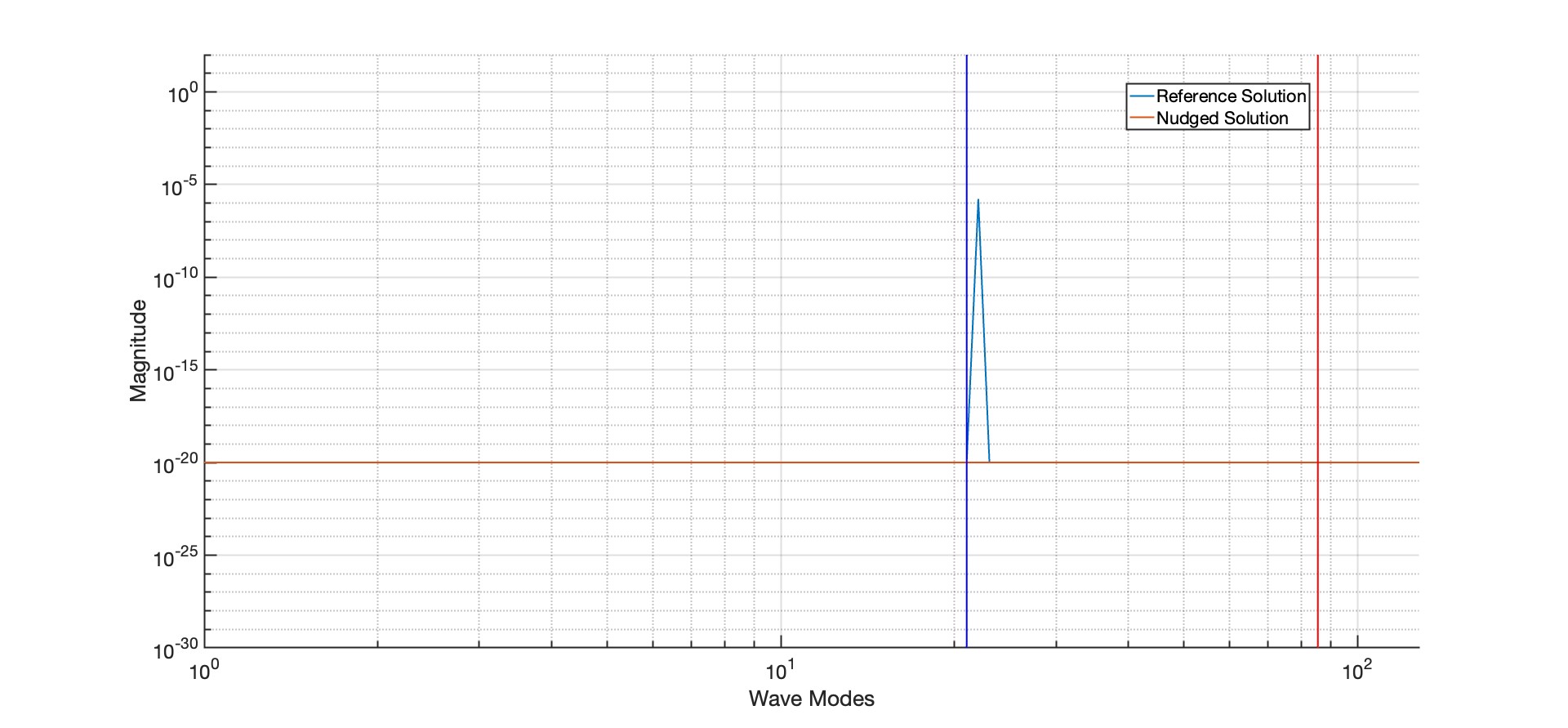}
        \caption{Snapshot of $L^2$ spectrum of solutions at time $t = 1$.}
        \label{fig:Euler spectrum 21}
    \end{subfigure}
 \caption{ Nudging with $21$ observed modes fails to recover reference solution with nudging with $\mu = 100$. The nudged solution is initialized to be identically zero and the reference solution is initialized as in \cref{eq: Taylor-Green} with $k = 15$. Note that $M = 21$ gives the radius of the observed frequencies, and the effective frequency for $k = 15$ is in fact $\abs{(15,15) } = \sqrt{2}(15) \approx 21.21.$} 
\label{fig:Euler:21}
\end{figure}

\begin{figure}
    \centering

    \begin{subfigure}[t]{0.95\textwidth}
        \centering
        \includegraphics[width=\linewidth]{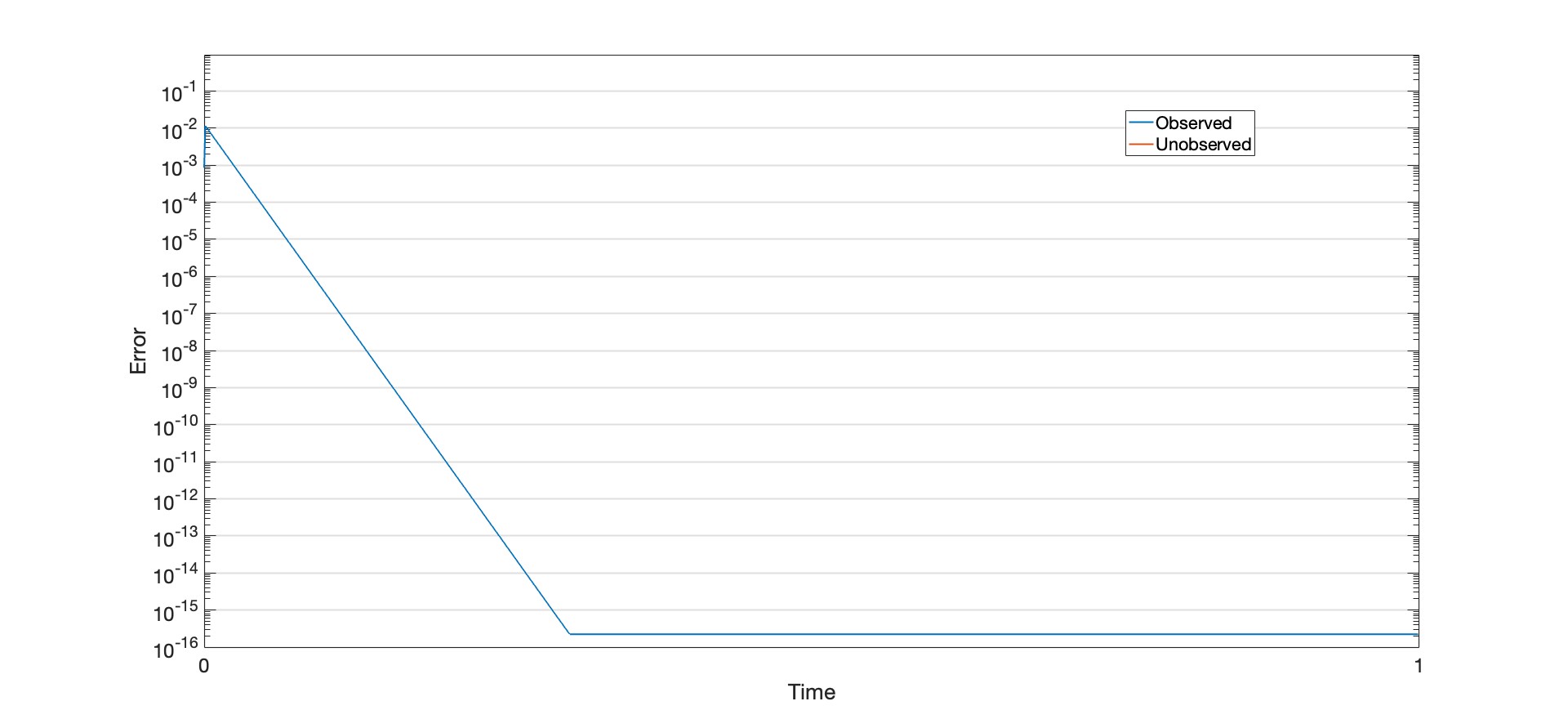}
        \caption{Error over time for $\mu = 100$ with $M = 22$ observed modes.}
        \label{fig:Euler error 22}
    \end{subfigure}
    \begin{subfigure}[t]{0.95\textwidth}
        \centering
        \includegraphics[width=\linewidth]{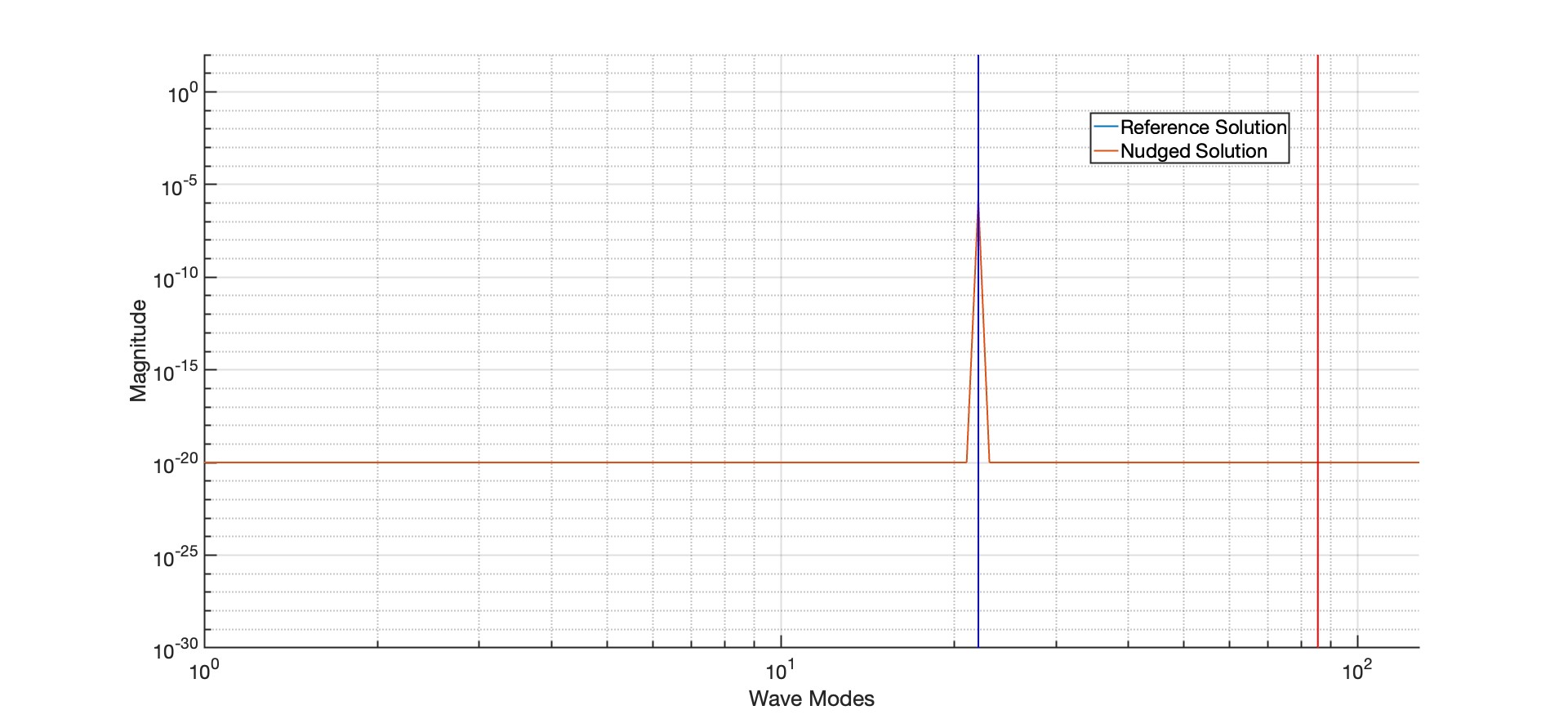}
        \caption{Snapshot of $L^2$ spectrum of solutions at time $t = 1$.}
        \label{fig:Euler spectrum 22}
    \end{subfigure}
 \caption{ Nudging with $22$ observed modes fails to recover reference solution with nudging with $\mu = 100$. The nudged solution is initialized to be identically zero and the reference solution is initialized as in \cref{eq: Taylor-Green} with $k = 15$. Note that $M = 22$ gives the radius of the observed frequencies, and the effective frequency for $k = 15$ is in fact $\abs{(15,15) } = \sqrt{2}(15) \approx 21.21.$} 
\label{fig:Euler:22}
\end{figure}

\section{Conclusion}\label{sect:conclusion}
Based on the results of the simulation presented in \cref{sect:KdV:computational results} and \cref{sect:Euler:computational results}, we conclude that the AOT data assimilation algorithm is unsuitable for recovering non-dissipative or partially dissipative dynamical systems, such as the incompressible Euler and KdV equations, \cref{eq: Euler,eq:KdV}, respectively, and the partially dissipative Lorenz system, \cref{eq:Lorenz}. 
In fact, we conclude that the pathological counterexamples used in \cref{sect:KdV,sect:Euler} present an insurmountable challenge for \emph{any} method of data assimilation to recover. 
In particular, the counterexample we utilize in \cref{fig:zero separation,fig:Euler:21} will fail for any method of data assimilation, as the true solution has no discernible impact on the observed modes, and so it is impossible to reliably tell the true solution apart from the zero solution (or one of an infinite number of solutions one can readily construct) based solely on the observations. 
Moreover, similar counter examples can be readily constructed for other non-dissipative dynamical systems, as the counter examples we considered in this work are not specific to the KdV or Euler equations, but rather are a consequence of the invariance of certain periodic subspaces under the flow. 
The particular solutions considered in \cref{prop:4,prop:2 - Euler} were constructed due to the periodic invariance induced by the convolutional Fourier mode structure of the nonlinear term (see \cref{cor,prop:1 - Euler}).

In addition to showing that data assimilation simply does not work for certain  pathological counterexamples, we note that the extent to which data assimilation does work for the classical KdV equation may be highly dependent on the choice of initial data for the nudged system, $v^{in}$. 
Without dissipation in the dynamical system, the errors in the initial data may persist indefinitely, preventing the accurate recovery of the reference solution.
Indeed, while it was proved in \cite{Azouani_Olson_Titi_2014} that for 2D NSE the choice of initial data $v^{in}$ was arbitrary, we have seen here that data assimilation will simply fail if not initialized properly, as it does in \cref{fig:zero separation}. 
Overcoming the problem of initialization is one of the core strengths of the nudging algorithm, and so it is worth noting that this property fails to hold for a system without finitely many determining modes.

It is worth mentioning that that data assimilation could be made to work if one were to assume more of the true solution. For instance, if one were to assume that the solution was supported on an annulus in Fourier space, one could eliminate all the examples that we study here. We caution against doing this, however, as imposing these sorts of restrictions on the solution can trivialize the dynamics of equations in a way that may not be physical.

\FloatBarrier

\section*{Acknowledgments}
\noindent

This research was supported in part by NPRP grant \# S-0207-359200290 from the Qatar National Research Fund (a member of Qatar Foundation). The work of E.S.T. was also supported by King Abdullah University of Science and Technology (KAUST) Office of Sponsored Research (OSR) under Award No. OSR-2020-CRG9-4336.

\bibliographystyle{plain}
\bibliography{VictorBiblio}

\begin{thebibliography}{10}

\bibitem{Albanez_Nussenzveig_Lopes_Titi_2016}
D{\'e}bora~A.F. Albanez, Helena~J. Nussenzveig~Lopes, and Edriss~S. Titi.
\newblock Continuous data assimilation for the three-dimensional {N}avier--{S}tokes-$\alpha$ model.
\newblock {\em Asymptotic Anal.}, 97(1-2):139--164, 2016.

\bibitem{Altaf_Titi_Knio_Zhao_Mc_Cabe_Hoteit_2015}
M.~U. {Altaf}, E.~S. {Titi}, O.~M. {Knio}, L.~{Zhao}, M.~F. {McCabe}, and I.~{Hoteit}.
\newblock Downscaling the 2{D} {B}enard convection equations using continuous data assimilation.
\newblock {\em Comput. Geosci}, 21(3):393--410, 2017.

\bibitem{Anthes_1974_JAS}
Richard~A. Anthes.
\newblock Data assimilation and initialization of hurricane prediction models.
\newblock {\em J. Atmos. Sci.}, 31(3):702--719, 1974.

\bibitem{Azouani_Olson_Titi_2014}
Abderrahim Azouani, Eric Olson, and Edriss~S. Titi.
\newblock Continuous data assimilation using general interpolant observables.
\newblock {\em J. Nonlinear Sci.}, 24(2):277--304, 2014.

\bibitem{Azouani_Titi_2014}
Abderrahim Azouani and Edriss~S. Titi.
\newblock Feedback control of nonlinear dissipative systems by finite determining parameters---a reaction-diffusion paradigm.
\newblock {\em Evol. Equ. Control Theory}, 3(4):579--594, 2014.

\bibitem{Bessaih_Olson_Titi_2015}
Hakima Bessaih, Eric Olson, and Edriss~S. Titi.
\newblock Continuous data assimilation with stochastically noisy data.
\newblock {\em Nonlinearity}, 28(3):729--753, 2015.

\bibitem{Biswas_Bradshaw_Jolly_2020}
Animikh Biswas, Zachary Bradshaw, and Michael~S. Jolly.
\newblock Data assimilation for the {N}avier-{S}tokes equations using local observables.
\newblock {\em SIAM J. Appl. Dyn. Syst.}, 20(4):2174--2203, 2021.

\bibitem{Biswas_Foias_Mondaini_Titi_2018downscaling}
Animikh Biswas, Ciprian Foias, Cecilia~F. Mondaini, and Edriss~S. Titi.
\newblock Downscaling data assimilation algorithm with applications to statistical solutions of the {N}avier--{S}tokes equations.
\newblock In {\em Annales de l'Institut Henri Poincar{\'e} C, Analyse non lin{\'e}aire}, pages 295--326. Elsevier, 2019.

\bibitem{Biswas_Martinez_2017}
Animikh Biswas and Vincent~R. Martinez.
\newblock Higher-order synchronization for a data assimilation algorithm for the 2{D} {N}avier--{S}tokes equations.
\newblock {\em Nonlinear Anal. Real World Appl.}, 35:132--157, 2017.

\bibitem{Biswas_Price_2020_AOT3D}
Animikh Biswas and Randy Price.
\newblock Continuous data assimilation for the three dimensional {N}avier-{S}tokes equations.
\newblock (submitted) arXiv 2003.01329.

\bibitem{Blocher_Martinez_Olson_2018}
Jordan Blocher, Vincent~R Martinez, and Eric Olson.
\newblock Data assimilation using noisy time-averaged measurements.
\newblock {\em Physica D: Nonlinear Phenomena}, 376:49--59, 2018.

\bibitem{Bona_Smith_1975}
Jerry~L. Bona and Ronald Smith.
\newblock The initial-value problem for the {K}orteweg-de {V}ries equation.
\newblock {\em Philosophical Transactions of the Royal Society of London. Series A, Mathematical and Physical Sciences}, 278(1287):555--601, 1975.

\bibitem{Budgel_1986}
W.~P. Budgell.
\newblock Nonlinear data assimilation for shallow water equations in branched channels.
\newblock {\em Journal of Geophysical Research: Oceans}, 91(C9):10633--10644, 1986.

\bibitem{Canuto_Hussaini_Quarteroni_Zang_2006}
C.~Canuto, M.~Y. Hussaini, A.~Quarteroni, and T.~A. Zang.
\newblock {\em Spectral {M}ethods}.
\newblock Scientific Computation. Springer-Verlag, Berlin, 2006.
\newblock Fundamentals in single domains.

\bibitem{Carlson_Farhat_Martinez_Victor_2024_ISYNC}
Elizabeth Carlson, Aseel Farhat, Vincent Martinez, and Collin Victor.
\newblock On the infinite-nudging limit of the nudging filter for continuous data assimilation.
\newblock {\em arXiv preprint arXiv:2408.02646}, 2024.

\bibitem{Carlson_Hudson_Larios_2018}
Elizabeth Carlson, Joshua Hudson, and Adam Larios.
\newblock Parameter recovery for the 2 dimensional {N}avier-{S}tokes equations via continuous data assimilation.
\newblock {\em SIAM J. Sci. Comput.}, 42(1):A250--A270, 2020.

\bibitem{Carlson_Hudson_Larios_Martinez_Ng_Whitehead_2022}
Elizabeth Carlson, Joshua Hudson, Adam Larios, Vincent~R Martinez, Eunice Ng, and Jared~P Whitehead.
\newblock Dynamically learning the parameters of a chaotic system using partial observations.
\newblock {\em Discrete and Continuous Dynamical Systems}, 42(8):3809--3839, 2022.

\bibitem{Carlson_Larios_2021_sens}
Elizabeth Carlson and Adam Larios.
\newblock Sensitivity analysis for the 2{D} {N}avier--{S}tokes equations with applications to continuous data assimilation.
\newblock {\em Journal of Nonlinear Science}, 31(5):84, 2021.

\bibitem{Carlson_VanRoekel_Petersen_Godinez_Larios_2021}
Elizabeth Carlson, \~Van~Roekel, Luke, Mark Petersen, Humberto~C. Godinez, and Adam Larios.
\newblock {CDA} algorithm implemented in {MPAS-O} to improve eddy effects in a mesoscale simulation.
\newblock 2021.
\newblock (submitted).

\bibitem{Celik_Olson_2023}
Emine Celik and Eric Olson.
\newblock Data assimilation using time-delay nudging in the presence of {G}aussian noise.
\newblock {\em Journal of Nonlinear Science}, 33(6):110, 2023.

\bibitem{Chen_Li_Lunasin_2021}
Nan Chen, Yuchen Li, and Evelyn Lunasin.
\newblock An efficient continuous data assimilation algorithm for the {S}abra shell model of turbulence, 2021.

\bibitem{DiLeoni_Clark_Mazzino_Biferale_2018_inferring}
Patricio Clark Di~Leoni, Andrea Mazzino, and Luca Biferale.
\newblock Inferring flow parameters and turbulent configuration with physics-informed data assimilation and spectral nudging.
\newblock {\em Physical Review Fluids}, 3(10):104604, 2018.

\bibitem{Cockburn_Jones_Titi_1997}
B.~Cockburn, D.A. Jones, and E.S. Titi.
\newblock Estimating the number of asymptotic degrees of freedom for nonlinear dissipative systems.
\newblock {\em Math. Comput.}, 66:1073--1087, 1997.

\bibitem{Constantin_Foias_1988}
P.~Constantin and C.~Foias.
\newblock {\em {N}avier--{S}tokes {E}quations}.
\newblock Chicago Lectures in Mathematics. University of Chicago Press, Chicago, IL, 1988.

\bibitem{Desamsetti_Dasari_Langodan_Knio_Hoteit_Titi_2019_WRF}
S.~Desamsetti, H.P. Dasari, S.~Langodan, O.~Knio, I.~Hoteit, and Edriss~S. Titi.
\newblock Efficient dynamical downscaling of general circulation models using continuous data assimilation.
\newblock {\em Quarterly Journal of the Royal Meteorological Society}, 2019.

\bibitem{Diegel_Rebholz_2021}
Amanda~E. Diegel and Leo~G. Rebholz.
\newblock Continuous data assimilation and long-time accuracy in a {$C^0$} interior penalty method for the {C}ahn-{H}illiard equation.
\newblock {\em Appl. Math. Comput.}, 424:Paper No. 127042, 22, 2022.

\bibitem{Doering_Gibbon_1995_book}
Charles~R. Doering and J.~D. Gibbon.
\newblock {\em Applied {A}nalysis of the {N}avier--{S}tokes {E}quations}.
\newblock Cambridge Texts in Applied Mathematics. Cambridge University Press, Cambridge, 1995.

\bibitem{Du_Shiue_2021}
Yi~Juan Du and Ming-Cheng Shiue.
\newblock Analysis and computation of continuous data assimilation algorithms for {L}orenz 63 system based on nonlinear nudging techniques.
\newblock {\em Journal of Computational and Applied Mathematics}, 386:113246, 2021.

\bibitem{Elgindi_Hu_Sverak_2017}
Tarek Elgindi, Wenqing Hu, and Vladim{\'\i}r {\v{S}}ver{\'a}k.
\newblock On 2d incompressible {E}uler equations with partial damping.
\newblock {\em Communications In Mathematical Physics}, 355(1):145--159, 2017.

\bibitem{Emami_Bowman_2018}
Pedram Emami and John~C. Bowman.
\newblock On the global attractor of 2{D} incompressible turbulence with random forcing.
\newblock {\em J. Differential Equations}, 264(6):4036--4066, 2018.

\bibitem{Farhat_Jolly_Titi_2015}
Aseel Farhat, Michael~S. Jolly, and Edriss~S. Titi.
\newblock Continuous data assimilation for the 2{D} {B}\'enard convection through velocity measurements alone.
\newblock {\em Phys. D}, 303:59--66, 2015.

\bibitem{Farhat_Lunasin_Titi_2016abridged}
Aseel Farhat, Evelyn Lunasin, and Edriss~S. Titi.
\newblock Abridged continuous data assimilation for the 2{D} {N}avier--{S}tokes equations utilizing measurements of only one component of the velocity field.
\newblock {\em J. Math. Fluid Mech.}, 18(1):1--23, 2016.

\bibitem{Farhat_Lunasin_Titi_2016benard}
Aseel Farhat, Evelyn Lunasin, and Edriss~S. Titi.
\newblock Data assimilation algorithm for 3{D} {B\'e}nard convection in porous media employing only temperature measurements.
\newblock {\em J. Math. Anal. Appl.}, 438(1):492--506, 2016.

\bibitem{Farhat_Lunasin_Titi_2016_Charney}
Aseel Farhat, Evelyn Lunasin, and Edriss~S. Titi.
\newblock On the {C}harney conjecture of data assimilation employing temperature measurements alone: the paradigm of 3{D} planetary geostrophic model.
\newblock {\em Mathematics of Climate and Weather Forecasting}, 2(1), 2016.

\bibitem{Farhat_Lunasin_Titi_2017_Horizontal}
Aseel Farhat, Evelyn Lunasin, and Edriss~S. Titi.
\newblock Continuous data assimilation for a {2D} {B}\'enard convection system through horizontal velocity measurements alone.
\newblock {\em J. Nonlinear Sci.}, pages 1--23, 2017.

\bibitem{Foias_Manley_Rosa_Temam_2001}
C.~Foias, O.~Manley, R.~Rosa, and R.~Temam.
\newblock {\em {N}avier--{S}tokes {E}quations and {T}urbulence}, volume~83 of {\em Encyclopedia of Mathematics and its Applications}.
\newblock Cambridge University Press, Cambridge, 2001.

\bibitem{Foias_Prodi_1967}
C.~Foia{\c{s}} and G.~Prodi.
\newblock Sur le comportement global des solutions non-stationnaires des {\'e}quations de {N}avier-{S}tokes en dimension {$2$}.
\newblock {\em Rend. Sem. Mat. Univ. Padova}, 39:1--34, 1967.

\bibitem{Foias_Mondaini_Titi_2016}
Ciprian Foias, Cecilia~F. Mondaini, and Edriss~S. Titi.
\newblock A discrete data assimilation scheme for the solutions of the two-dimensional {N}avier--{S}tokes equations and their statistics.
\newblock {\em SIAM J. Appl. Dyn. Syst.}, 15(4):2109--2142, 2016.

\bibitem{Foyash_Dzholli_Kravchenko_Titi_2014}
K.~Foyash, M.~S. Dzholli, R.~Kravchenko, and {\`E}.~S. Titi.
\newblock A unified approach to the construction of defining forms for a two-dimensional system of {N}avier--{S}tokes equations: the case of general interpolating operators.
\newblock {\em Uspekhi Mat. Nauk}, 69(2(416)):177--200, 2014.

\bibitem{Franz_Larios_Victor_2021}
Trenton Franz, Adam Larios, and Collin Victor.
\newblock The bleeps, the sweeps, and the creeps: Convergence rates for observer patterns via data assimilation for the 2d {N}avier-{S}tokes equations.
\newblock {\em Comput. Methods Appl. Mech. Engrg.}, 392:Paper No. 114673, 19, 2022.

\bibitem{GarciaArchilla_Novo_2020}
Bosco Garc\'{\i}a-Archilla and Julia Novo.
\newblock Error analysis of fully discrete mixed finite element data assimilation schemes for the {N}avier-{S}tokes equations.
\newblock {\em Adv. Comput. Math.}, 46(4):Paper No. 61, 33, 2020.

\bibitem{GarciaArchilla_Novo_Titi_2018}
Bosco Garc\'{\i}a-Archilla, Julia Novo, and Edriss~S. Titi.
\newblock Uniform in time error estimates for a finite element method applied to a downscaling data assimilation algorithm for the {N}avier-{S}tokes equations.
\newblock {\em SIAM J. Numer. Anal.}, 58(1):410--429, 2020.

\bibitem{Gardner_Larios_Rebholz_Vargun_Zerfas_2020_VVDA}
M.~Gardner, A.~Larios, L.~G. Rebholz, D.~Vargun, and C.~Zerfas.
\newblock Continuous data assimilation applied to a velocity-vorticity formulation of the 2d {N}avier-{S}tokes equations.
\newblock {\em Electronic Research Archive}, 29(3):2223--2247, 2021.

\bibitem{Gesho_Olson_Titi_2015}
Masakazu Gesho, Eric Olson, and Edriss~S. Titi.
\newblock A computational study of a data assimilation algorithm for the two-dimensional {N}avier--{S}tokes equations.
\newblock {\em Commun. Comput. Phys.}, 19(4):1094--1110, 2016.

\bibitem{Ghidaglia_1988}
J-M. Ghidaglia.
\newblock Weakly damped forced {K}orteweg-de {V}ries equations behave as a finite dimensional dynamical system in the long time.
\newblock {\em Journal of Differential Equations}, 74:369--390, 1988.

\bibitem{Ghidaglia_1994}
J-M. Ghidaglia.
\newblock A note on the strong convergence towards attractors of damped forced {K}d{V} equations.
\newblock {\em Journal of Differential Equations}, 110:356--359, 1994.

\bibitem{Hayden_2007}
Kevin Hayden.
\newblock {\em Synchronization in the {L}orenz System}.
\newblock PhD thesis, Masters Thesis, University of Nevada, Department of Mathematics and Statistics, 2007.

\bibitem{Hayden_Olson_Titi_2011}
Kevin Hayden, Eric Olson, and Edriss~S. Titi.
\newblock Discrete data assimilation in the {L}orenz and 2{D} {N}avier--{S}tokes equations.
\newblock {\em Phys. D}, 240(18):1416--1425, 2011.

\bibitem{Hoke_Anthes_1976_MWR}
James~E Hoke and Richard~A Anthes.
\newblock The initialization of numerical models by a dynamic-initialization technique.
\newblock {\em Monthly Weather Review}, 104(12):1551--1556, 1976.

\bibitem{Ibdah_Mondaini_Titi_2018uniform}
Hussain~A Ibdah, Cecilia~F Mondaini, and Edriss~S. Titi.
\newblock Fully discrete numerical schemes of a data assimilation algorithm: uniform-in-time error estimates.
\newblock {\em IMA Journal of Numerical Analysis}, 40(4):2584--2625, 2020.

\bibitem{Jolly_Martinez_Olson_Titi_2018_blurred_SQG}
Michael~S. Jolly, Vincent~R. Martinez, Eric~J. Olson, and Edriss~S. Titi.
\newblock Continuous data assimilation with blurred-in-time measurements of the surface quasi-geostrophic equation.
\newblock {\em Chin. Ann. Math. Ser. B}, 40(5):721--764, 2019.

\bibitem{Jolly_Martinez_Titi_2017}
Michael~S. Jolly, Vincent~R. Martinez, and Edriss~S. Titi.
\newblock A data assimilation algorithm for the subcritical surface quasi-geostrophic equation.
\newblock {\em Adv. Nonlinear Stud.}, 17(1):167--192, 2017.

\bibitem{Jolly_Sadigov_Titi_2017}
Michael~S. Jolly, Tural Sadigov, and Edriss~S. Titi.
\newblock Determining form and data assimilation algorithm for weakly damped and driven {K}orteweg--de {V}ries equation—{F}ourier modes case.
\newblock {\em Nonlinear Analysis: Real World Applications}, 36:287--317, 2017.

\bibitem{Jones_Titi_1992}
Don~A. Jones and Edriss~S. Titi.
\newblock Determining finite volume elements for the 2{D} {N}avier--{S}tokes equations.
\newblock {\em Phys. D}, 60(1-4):165--174, 1992.
\newblock Experimental mathematics: computational issues in nonlinear science (Los Alamos, NM, 1991).

\bibitem{Kassam_Trefethen_2005}
Aly-Khan Kassam and Lloyd~N. Trefethen.
\newblock Fourth-order time-stepping for stiff {PDE}s.
\newblock {\em SIAM J. Sci. Comput.}, 26(4):1214--1233, 2005.

\bibitem{Kevlahan_Khat_Protas_2019}
N.KR. Kevlahan, R.~Khan, and B.~Protas.
\newblock On the convergence of data assimilation for the one-dimensional shallow water equations with sparse observations.
\newblock {\em Advances in Computational Mathematics}, 45:3195--3216, 2019.

\bibitem{Larios_Pei_2017_KSE_DA_NL}
Adam Larios and Yuan Pei.
\newblock Nonlinear continuous data assimilation.
\newblock arXiv:1703.03546.

\bibitem{Larios_Pei_2018_NSV_DA}
Adam Larios and Yuan Pei.
\newblock Approximate continuous data assimilation of the 2{D} {N}avier--{S}tokes equations via the {V}oigt-regularization with observable data.
\newblock {\em Evol. Equ. Control Theory}, 9(3):733--751, 2020.

\bibitem{Larios_Rebholz_Zerfas_2018}
Adam Larios, Leo~G. Rebholz, and Camille Zerfas.
\newblock Global in time stability and accuracy of {IMEX-FEM} data assimilation schemes for {N}avier-{S}tokes equations.
\newblock {\em Computer Methods in Applied Mechanics and Engineering}, 2018.

\bibitem{Larios_Victor_2019}
Adam Larios and Collin Victor.
\newblock Continuous data assimilation with a moving cluster of data points for a reaction diffusion equation: A computational study.
\newblock {\em Commun. Comp. Phys.}, 29:1273--1298, 2021.

\bibitem{Law_Shukla_Stuart_2014}
Kody Law, Abhishek Shukla, and Andrew Stuart.
\newblock Analysis of the {3DVAR} filter for the partially observed {L}orenz '63 model, 2014.

\bibitem{Lorenz_1963}
Edward~N Lorenz.
\newblock Deterministic nonperiodic flow.
\newblock {\em Journal of atmospheric sciences}, 20(2):130--141, 1963.

\bibitem{Lunasin_Titi_2015}
Evelyn Lunasin and Edriss~S. Titi.
\newblock Finite determining parameters feedback control for distributed nonlinear dissipative systems---a computational study.
\newblock {\em Evol. Equ. Control Theory}, 6(4):535--557, 2017.

\bibitem{Majda_Bertozzi_2002}
A.~J. Majda and A.~L. Bertozzi.
\newblock {\em Vorticity and {I}ncompressible {F}low}, volume~27 of {\em Cambridge Texts in Applied Mathematics}.
\newblock Cambridge University Press, Cambridge, 2002.

\bibitem{Markowich_Titi_Trabelsi_2016_Darcy}
Peter~A. Markowich, Edriss~S. Titi, and Saber Trabelsi.
\newblock Continuous data assimilation for the three-dimensional {B}rinkman-{F}orchheimer-extended {D}arcy model.
\newblock {\em Nonlinearity}, 29(4):1292--1328, 2016.

\bibitem{Mondaini_Titi_2018_SIAM_NA}
Cecilia~F. Mondaini and Edriss~S. Titi.
\newblock Uniform-in-time error estimates for the postprocessing {G}alerkin method applied to a data assimilation algorithm.
\newblock {\em SIAM J. Numer. Anal.}, 56(1):78--110, 2018.

\bibitem{Olson_Titi_2008_TCFD}
Eric Olson and Edriss~S. Titi.
\newblock Determining modes and grashof number in 2{D} turbulence: a numerical case study.
\newblock {\em Theor. Comp. Fluid Dyn.}, 22(5):327--339, 2008.

\bibitem{Pachev_Whitehead_McQuarrie_2021concurrent}
Benjamin Pachev, Jared~P. Whitehead, and Shane~A. McQuarrie.
\newblock Concurrent multi-parameter learning demonstrated on the {K}uramoto-{S}ivashinsky equation.
\newblock {\em SIAM J. Sci. Comput.}, 44(5):A2974--A2990, 2022.

\bibitem{Pecora_Carroll_1990}
Louis~M Pecora and Thomas~L Carroll.
\newblock Synchronization in chaotic systems.
\newblock {\em Physical review letters}, 64(8):821, 1990.

\bibitem{Pei_2019}
Yuan Pei.
\newblock Continuous data assimilation for the 3{D} primitive equations of the ocean.
\newblock {\em Comm. Pure Appl. Math.}, 18(2):643, 2019.

\bibitem{Peng_Wu_Shiue_2023}
Yu~Chen Peng, Liang~Chun Wu, and Ming-Cheng Shiue.
\newblock Finite time synchronization of the continuous/discrete data assimilation algorithms for {L}orenz 63 system based on the back and forth nudging techniques.
\newblock {\em Results in Applied Mathematics}, 20:100407, 2023.

\bibitem{Peyret_2013_spectral_book}
Roger Peyret.
\newblock {\em Spectral {M}ethods for {I}ncompressible {V}iscous {F}low}, volume 148.
\newblock Springer Science \& Business Media, 2013.

\bibitem{Rebholz_Zerfas_2018_alg_nudge}
Leo~G. Rebholz and Camille Zerfas.
\newblock Simple and efficient continuous data assimilation of evolution equations via algebraic nudging.
\newblock {\em Numer Methods Partial Differential Eq.}, pages 1--25, 2021.

\bibitem{Robinson_2001}
J.~C. Robinson.
\newblock {\em Infinite-{D}imensional {D}ynamical {S}ystems}.
\newblock Cambridge Texts in Applied Mathematics. Cambridge University Press, Cambridge, 2001.
\newblock An {I}ntroduction to {D}issipative {P}arabolic {PDE}s and the {T}heory of {G}lobal {A}ttractors.

\bibitem{Shen_Tang_Wang_2011}
Jie Shen, Tao Tang, and Li-Lian Wang.
\newblock {\em Spectral {M}ethods}, volume~41 of {\em Springer Series in Computational Mathematics}.
\newblock Springer, Heidelberg, 2011.
\newblock Algorithms, analysis and applications.

\bibitem{Temam_1997_IDDS}
R.~Temam.
\newblock {\em Infinite-{D}imensional {D}ynamical {S}ystems {I}n {M}echanics and {P}hysics}, volume~68 of {\em Applied Mathematical Sciences}.
\newblock Springer-Verlag, New York, second edition, 1997.

\bibitem{Tirupathi_Tigran_Zhuk_McKenna_2016}
Seshu Tirupathi, Tigran {T. Tchrakian}, Sergiy Zhuk, and Sean McKenna.
\newblock Shock capturing data assimilation algorithm for 1d shallow water equations.
\newblock {\em Advances in Water Resources}, 88:198--210, 2016.

\bibitem{Trefethen_2000_spML}
Lloyd~N. Trefethen.
\newblock {\em Spectral {M}ethods in {MATLAB}}, volume~10 of {\em Software, Environments, and Tools}.
\newblock Society for Industrial and Applied Mathematics (SIAM), Philadelphia, PA, 2000.

\bibitem{Tucker_1999}
Warwick Tucker.
\newblock The {L}orenz attractor exists.
\newblock {\em Comptes Rendus de l'Académie des Sciences - Series I - Mathematics}, 328(12):1197--1202, 1999.

\bibitem{Zabusky_Kruskal_1965}
N.~J. Zabusky and M.~D. Kruskal.
\newblock Interaction of "solitons" in a collisionless plasma and the recurrence of initial states.
\newblock {\em Phys. Rev. Lett.}, 15:240--243, Aug 1965.

\bibitem{Zerfas_Rebholz_Schneier_Iliescu_2019}
Camille Zerfas, Leo~G. Rebholz, Michael Schneier, and Traian Iliescu.
\newblock Continuous data assimilation reduced order models of fluid flow.
\newblock {\em Comput. Methods Appl. Mech. Engrg.}, 357:112596, 18, 2019.

\end{thebibliography}
\end{document}